\definecolor{codegray}{rgb}{0.5,0.5,0.5}
\lstdefinestyle{mystyle}{
    commentstyle=\color{codegreen},
    keywordstyle=\color{magenta},
    numberstyle=\tiny\color{codegray},
    stringstyle=\color{codepurple},
    basicstyle=\ttfamily\footnotesize,
    breakatwhitespace=false,         
    breaklines=true,                 
    captionpos=b,                    
    keepspaces=true,                 
    numbers=left,                    
    numbersep=5pt,                  
    showspaces=false,                
    showstringspaces=false,
    showtabs=false,                  
    tabsize=2
}
  \lstdefinelanguage{GAP}{
    basicstyle=\ttfamily,
    keywords={true, false, function, return, fail, if, in, while, do, od, else, elif, fi, break, continue},
    keywordstyle=\color{blue}\bfseries,
    otherkeywords={
      >, <, ==
    },
    breaklines=true,      
    identifierstyle=\color{black},
    sensitive=True,
    comment=[l]{\#},
    commentstyle=\color{cyan},
    stringstyle=\color{red},
    morestring=[b]',
    morestring=[b]"
  }
\providecommand{\U}[1]{\protect\rule{.1in}{.1in}}
\newcolumntype{Y}{>{\raggedleft\arraybackslash}X}
\def\bc{{\mathbb{C}}}
\def\bn{{\mathbb{N}}}
\def\br{{\mathbb{R}}}
\def\bz{{\mathbb{Z}}}
\def\br{\mathbb R}
\def\wt{\widetilde}
\def\vs{\vskip.3cm}
\def\noi{\noindent}
\def\wt{\widetilde}
\def\gdeg{G\text{\rm -deg}}
\def\Om{\Omega}
\def\ve{\varepsilon}
\def\ker{\text{\rm Ker\,}}
\DeclareMathOperator{\id}{Id}
\newcommand\cV{\ensuremath{\mathcal V}}
  \definecolor{mygreen}{rgb}{0,.66,.05}
  \definecolor{lightyellow}{rgb}{1,1,.80}
\newtheorem{theorem}{Theorem}[section]
\newtheorem{proposition}{Proposition}[section]
\newtheorem{lemma}{Lemma}[section]
\newtheorem{corollary}{Corollary}[section]
\newtheorem{definition}{Definition}[section]
\newtheorem{remark}{Remark}[section]
\newtheorem{remark-definition}{Remark and Definition}[section]
\newtheorem{rem-not}{Remark and Notation}[section]
\begin{document}

\title[Global Bifurcation of Non-Radial Solutions]{Global Bifurcation of Non-Radial Solutions for Symmetric Sub-linear Elliptic Systems on the Planar Unit Disc} 

\author{ Ziad Ghanem}\address{Department of Mathematical Sciences, University of Texas at Dallas, Richardson, TX 75080, USA}
\email{Ziad.Ghanem@UTDallas.edu}

\author{Casey Crane}\address{Department of Mathematical Sciences, University of Texas at Dallas, Richardson, TX 75080, USA}
\email{Casey.Crane@UTDallas.edu}

\author{Jingzhou Liu}\address{Department of Mathematical Sciences, University of Texas at Dallas, Richardson, TX 75080, USA}
\email{Jingzhou.Liu@UTDallas.edu}

\date{}

\maketitle

\begin{abstract}
In this paper, we prove a global bifurcation result for the existence of non-radial branches of solutions to the paramterized family of $\Gamma$-symmetric equations $-\Delta u=f(\alpha,z,u)$, $u|_{\partial D}=0$ on the unit disc $D:=\{z\in \bc : |z|<1\}$ with $u(z)\in \br^k$, 
where $\br^k$ is an orthogonal $\Gamma$-representation, $f: \br \times \overline D\times \br^k\to \br^k$ is a sub-linear $\Gamma$-equivariant continuous function, differentiable with respect to $u$ at zero and satisfying the conditions $f(\alpha, e^{i\theta}z,u)=f(\alpha, z,u)$ for all $\theta\in \br$ and $f(z,-u)=-f(z,u)$. 
\end{abstract}

\noi \textbf{Mathematics Subject Classification:} Primary: 37G40, 35B06; Secondary: 47H11, 35J57,  35J91, 35B32, 47J15 

\medskip

\noi \textbf{Key Words and Phrases:} global bifurcation, non-linear Laplace equation, symmetric bifurcation, bifurcation invariant, non-radial solutions, equivariant
Brouwer degree.


\section{Introduction}
We draw our motivation from natural phenomena that are modeled by parameterized systems of autonomous Partial Differential Equations (PDEs), where the possible states of each phenomenon are represented by solutions of the associated system of equations at corresponding parameter values. These solutions can be categorized as \textit{trivial} or \textit{non-trivial}.
Trivial solutions are so-named because they persist for all parameter values and their existence is straightforward to determine. In contrast, non-trivial solutions may only exist for a particular range of parameter values and often exhibit exotic properties that enhance our understanding of the model. The {\it classical bifurcation problem} is concerned with the existence and global behaviour of branches of non-trivial solutions emerging from their trivial counterparts.
Phenomena which admit the symmetries of a certain group $G$ (represented by the $G$-equivariance of the associated equations) may be studied as {\it equivariant bifurcation problems}, with additional consideration placed on the symmetric properties of these branches (cf. \cite{BBKX,survey,BCM,CLM,LWZ,R1,R4,R5,RY1,RY3}). 
\vs
The application of topological methods to the study of differential equations, as with most tools in the arsenal of the nonlinear analyst, traces back to M. Poincare \cite{HPAS}. More recently, the {\it Local/Equivariant Brouwer Degrees} and their infinite dimensional generalizations -- the {\it Local/Equivariant Leray-Schauder Degrees} --  have proven prodigiously effective in obtaining existence results for solutions to a wide class of nonlinear differential equations. Somewhat remarkably, these degree theories are also instrumental in solving classical and equivariant bifurcation problems. The first use of the Leray-Schauder degree for the detection of (local) bifurcation in a parameterized system of nonlinear differential equations is attributed to the seminal work of M.A. Krasnosel'skii, in which sufficient conditions for the existence of a branch of non-trivial solutions are established (cf. \cite{Kra3}). Only a decade later, P. Rabinowitz (cf. \cite{Rab}) proposed his famous {\it Rabinowitz alternative}, which provides sufficient conditions for the unboundedness of a branch of non-trivial solutions. In this paper, we employ equivariant analogues of Krasnosel'skii and Rabinowitz type results to solve an equivariant bifurcation problem.
\vs
Specifically, our objective is to study the equivariant bifurcation problem for a symmetric system of parameterized nonlinear Laplace equations subject to Dirichlet boundary conditions. The bifurcation and global behaviour of solutions for such systems has been extensively studied, due to their significance in applied mathematics and physics. In the case that the nonlinearity arises as the gradient of some potential function, the bifurcation problem can be studied as a parameterized variational problem and appropriate topological methods such as Morse theory, index theory, and the equivariant gradient degree (cf. \cite{GR}) can be administered (cf. \cite{Bar,BDZ,BdF,Chang,CFM,CLM,R1,R4,R5,RY1,RY3,W-Q}). 
However, these approaches are not applicable for systems which lack variational structure. The degree theoretic methods used in this paper impose no variational requirements on the equations and, as such, are broadly applicable to a wider class of problems.
\vs
Consider the following parameterized family of symmetric Laplace equations:
\begin{equation}\label{eq:Lap}
\begin{cases}
-\Delta u=f(\alpha, z,u), \quad \alpha \in \br, \; z \in D, \; u(z)\in V \\
u|_{\partial D}=0,
\end{cases}
\end{equation}
where $D:=\{z\in \bc: |z|<1\}$ is the unit planar disc, $V:=\br^k$ is an orthogonal representation of a finite group $\Gamma$ and $f:\br\times \overline D\times V\to V$ 
is a continuous, odd, radially symmetric and $\Gamma$-equivariant family of functions of sub-linear growth, which are differentiable with respect to $u$ at the origin in $V$. In particular, we assume that $f$ satisfies the following conditions:
\vs
  \begin{enumerate}[label=($A_\arabic*$)]
\item\label{c1}  $f(\alpha, e^{i\theta}z,u)=f(\alpha,z,u)$ \; for all $\alpha\in \br, \; z\in D$, \; $u\in V$ and $\theta \in \br$;
  \item\label{c2} $f(\alpha,z,\gamma u)=\gamma f(\alpha,z,u)$ \; for all $\alpha\in \br, \; z\in D$, \; $u\in V$ and $\gamma\in \Gamma$;
\item\label{c3}  $f(\alpha, z,-u)=-f(\alpha,z,u)$ \; for all $\alpha\in \br$, \; $z\in D$ and $u\in V$;           
\item\label{c4}  there exist continuous functions $A:\br \to L(k, \br)$, $c: \br \rightarrow (0,\infty)$ and a number $\beta>1$ such that for each $\alpha \in \br$ one has
\begin{align*}
  |f(\alpha, z,u)-A(\alpha) u|\le c(\alpha) |u|^\beta \quad \text{for all} \;
 {z\in \overline{D}}, \; {u\in V}; 
\end{align*}
\item\label{c5} there exist continuous functions $a,b: \br \rightarrow (0,\infty)$ and a number $\nu\in (0,1)$ such that for each $\alpha \in \br$ the following inequality holds
\begin{align*}
|f(\alpha,z,u)|<a(\alpha)|u|^\nu +b(\alpha) \quad \text{for all} \; {z\in \overline{D}}, {u\in V}.
\end{align*}
\end{enumerate}
Conditions \ref{c1} \ref{c2} and \ref{c3} imply 
the $O(2) \times \Gamma\times \mathbb Z_2$-symmetry of system \eqref{eq:Lap} in the sense that $f:\br\times \overline D\times V\to V$ is $O(2)$-invariant with respect to the $O(2)$-action $O(2) \times D \rightarrow D$ given by $(\theta,z) \rightarrow e^{i \theta}z$, $(\kappa, z) \rightarrow \overline{z}$ and $\Gamma \times \mathbb Z_2$-equivariant with respect to the $\Gamma \times \mathbb Z_2$ action $(\Gamma \times \mathbb Z_2) \times V \rightarrow V$ given by $(\gamma, \pm 1, u) \rightarrow \pm \gamma u$. On the other hand, conditions \ref{c4} and \ref{c5} guarantee differentiability at the origin and sublinearity, respectively.
\vs
\begin{remark}\rm  \label{rm0}
Since the zero function is a solution to 
\eqref{eq:Lap} for all values $\alpha \in \br$, all trivial solutions to the system are of the form $(\alpha,0)$. Non-trivial solutions to \eqref{eq:Lap}, on the other hand, fall into two categories, namely:
\begin{itemize}
    \item[(i)] {\it radial solutions}, which depend only on the magnitude $|z|$ of any disc element $z\in D$ (although less obvious than the trivial solution, radial solutions can often be identified using classical methods, eg. by reducing \eqref{eq:Lap} to a second order ODE);
    \item[(ii)] and {\it non-radial solutions}, which exhibit dependency on the angular variable.
\end{itemize}
Our objective is to identify branches of non-radial solutions to \eqref{eq:Lap}, describe their possible symmetric properties, and characterize their global behavior.
\end{remark}
\vs
The methods used in this paper are
inspired by \cite{BHKR}, where an existence result was obtained for a similar sublinear elliptic system using equivariant degree theory. For a more thorough exposition of these topics, we direct readers to the recent monograph \cite{book-new} or, alternatively, the older text \cite{AED}. 
\vs
The equivariant degree is intimately connected with the classical Brouwer degree. Although its application is relatively simple, technical difficulties arise when dealing with algebraic computations related to unfamiliar group structures. Many of these issues can be resolved with usage of the G.A.P. system, and the G.A.P. package {\tt EquiDeg} (created by Haopin Wu), which is available online at  \url{https://github.com/psistwu/equideg} (cf. \cite{Pin}).
\vs
In the remainder of this paper, we employ tools from the equivariant degree theory to determine $(i)$ under what conditions branches of non-trivial solutions may bifurcate from a trivial solution, $(ii)$ under what conditions these branches consist only of non-radial solutions and $(iii)$ under what conditions these branches are unbounded. Subsequent sections are organized as follows: In Section \ref{sec:functional_reformulation} the problem \eqref{eq:Lap} is reformulated in an appropriate functional setting. In Section \ref{sec:bifurcation_abstract} we recall the abstract equivariant bifurcation results, including the equivariant analogues of the classical Krasnosel'skii and Rabinowitz theorems. In Section \ref{sec:bifurcation}, we apply the equivariant degree theory methods to establish local and global bifurcation results for \eqref{eq:Lap}. Finally, in Section \ref{sec:example} we present a motivating example of vibrating membranes where our main results, Theorem \ref{th:bounded} and \ref{th:unbounded-K}, are applied to demonstrate the existence of unbounded branches of non-radial solutions admitting all possible maximal orbit types. For convenience, the Appendices include an explanation of notations used, a summary of the spectral properties of the Laplace operator on the unit disc, and a brief introduction to the Brouwer equivariant degree theory.
\vs
\noi{\bf Acknowledgment:} We are deeply grateful to our advisor, Professor Krawcewicz, whose invaluable guidance and unwavering support made this work possible. We would also like to acknowledge Professor Garcia-Azpetia for insightful discussions regarding the estimation of the behaviour of our system near a bifurcation point. 

\section{Functional Space Reformulation and {\em a priori} Bounds}\label{sec:functional_reformulation}
Consider the Sobolev space $\mathscr H:=H^2(D,V) \cap H^1_0(D,V)$ equipped with the usual norm 
\begin{align*}
    \|u\|_{\mathscr H}:=\max\{\|D^s u\|_{2}: |s|\le 2\},
\end{align*}
where $s := (s_1,s_2)$, $|s|:=s_1+s_2 \leq 2$, and $D^s \varphi :=\frac{\partial ^{|s|} \varphi}{\partial^{s_1}x\partial^{s_2}y}$. It is well known that the \textit{Laplacian operator} $\mathscr L:\mathscr H\to L^2(D,V)$ given by
\begin{align}
\label{eq:operator-L}
\mathscr Lu:=-\Delta u,
\end{align}
is a linear isomorphism. 
\vs
Let $\nu \in (0,1)$ be the scalar from condition \ref{c5}. If one chooses $q > \max\{1,2\nu\}$ (for example, it is enough to take $q:= 2\beta$ (cf. assumption \ref{c4}),
then there is the standard Sobolev embedding $j:\mathscr H\to L^q(D,V)$
\begin{equation}\label{eq:operator-j}
 j(u)(z) := u(z), \quad u \in \mathscr H,
\end{equation}
and also its associated \textit{Nemytski operator} $N_{\alpha}: L^q(D,V) \rightarrow L^2(D,V)$ 
\begin{equation}\label{eq:operator-N}
N_\alpha(v)(z) := f(\alpha, z,v(z)), \quad z\in D, \; \alpha \in \br.
\end{equation}
\begin{lemma} 
\label{lemm:operator_N_WD}\rm 
Under conditions \ref{c1}--\ref{c5}, The Nemystiki operator \eqref{eq:operator-N} is well-defined.
\end{lemma}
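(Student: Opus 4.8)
The goal is to show that for each $\alpha \in \br$ the map $N_\alpha$ sends $L^q(D,V)$ into $L^2(D,V)$, i.e. that $z \mapsto f(\alpha, z, v(z))$ is a well-defined element of $L^2(D,V)$ whenever $v \in L^q(D,V)$. The plan is to combine the standard Carathéodory/measurability argument for superposition operators with the pointwise growth bound supplied by condition \ref{c5}. First I would record measurability: since $f(\alpha, \cdot, \cdot)$ is continuous on $\overline D \times V$ and $v$ is (Lebesgue) measurable, the composition $z \mapsto f(\alpha, z, v(z))$ is measurable as a composition of a continuous map with a measurable one (more carefully, one approximates $v$ by simple functions and uses continuity of $f$ in the $u$-variable together with joint continuity to pass to the limit a.e.). So the only real content is the integrability of $|f(\alpha, z, v(z))|^2$ over $D$.

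For the integrability estimate I would invoke \ref{c5} directly: there are constants $a(\alpha), b(\alpha) > 0$ and $\nu \in (0,1)$ with
\begin{align*}
|f(\alpha, z, v(z))| < a(\alpha)\,|v(z)|^\nu + b(\alpha) \qquad \text{for a.e. } z \in D.
\end{align*}
Hence, using $(s+t)^2 \le 2s^2 + 2t^2$,
\begin{align*}
\int_D |f(\alpha, z, v(z))|^2\, dz \le 2a(\alpha)^2 \int_D |v(z)|^{2\nu}\, dz + 2b(\alpha)^2 |D|.
\end{align*}
The second term is finite since $D$ is bounded. For the first term, since $q > \max\{1, 2\nu\}$ we have $2\nu < q$, so $|v|^{2\nu} \in L^{q/(2\nu)}(D)$ with $q/(2\nu) > 1$; as $D$ has finite measure, Hölder's inequality (or the inclusion $L^{q}(D) \hookrightarrow L^{2\nu}(D)$ for finite-measure domains) gives $\int_D |v|^{2\nu}\, dz \le |D|^{1 - 2\nu/q}\, \|v\|_{L^q}^{2\nu} < \infty$. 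Combining, $N_\alpha(v) \in L^2(D,V)$, which is exactly the assertion.

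The argument is essentially routine; the only point requiring a little care — and the step I would flag as the main (minor) obstacle — is the measurability of the superposition $z \mapsto f(\alpha, z, v(z))$, which is not literally a composition of continuous maps because $v$ is only measurable. This is handled by the classical lemma on Carathéodory functions: a function continuous in $u$ and (here) continuous, hence measurable, in $z$ generates a superposition operator that maps measurable functions to measurable functions, proved by approximating $v$ pointwise a.e. by simple functions $v_n$, noting each $z \mapsto f(\alpha, z, v_n(z))$ is measurable, and passing to the a.e. limit using continuity of $f(\alpha, z, \cdot)$. Everything else is an application of \ref{c5} and the finite measure of $D$; the choice $q := 2\beta > \max\{1, 2\nu\}$ made in the text (using $\beta > 1$ from \ref{c4} and $\nu \in (0,1)$) guarantees the exponent comparison $2\nu < q$ needed above.
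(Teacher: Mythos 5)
Your proof is correct and follows essentially the same route as the paper's: apply condition \ref{c5} pointwise, split the two terms (you via $(s+t)^2\le 2s^2+2t^2$, the paper via the triangle inequality in $L^2$, which is equivalent), and then control $\int_D|v|^{2\nu}$ by H\"older using $2\nu<q$ and the finite measure of $D$. Your additional paragraph on measurability of the superposition $z\mapsto f(\alpha,z,v(z))$ is a sound piece of extra care (the paper takes it for granted), but it does not change the substance of the argument.
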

\begin{proof}
It suffices to demonstrate that for any
$v \in L^q(D,V)$ and $\alpha \in \br$ one has $N_\alpha(v) \in L^2(D,V)$.
Combining  
\ref{c5} with the H\"older inequality, one has:
\begin{align}
\|N_\alpha(v)\|_{L^2}& = \|f(\alpha,z,v) \|_{L^2} \nonumber\\
&\le  \|\, a(\alpha) |v|^\nu\|_{L^2} + \|\, b(\alpha) \|_{L^2}  \nonumber\\
&= a(\alpha) \|\, |v|^\nu\|_{L^2} +  b(\alpha) \sqrt \pi  \nonumber\\
& =   a(\alpha) \left(\int_D|v|^{2\nu}    \right)^{\frac 12} +  b(\alpha) \sqrt \pi \nonumber  \\
& \le a(\alpha) \pi^{{1 - 2\nu/q} \over 2} 
\left(\int_D |v|^{\frac{2\nu q}{2\nu }}    \right)^{\frac{2\nu}{2q}} + b(\alpha) \sqrt \pi \nonumber\\
&= a(\alpha) \pi^{1/2-\nu/q}\|v\|^\nu_{L^q} + b(\alpha) \sqrt{\pi},
\label{eq:estimate}
\end{align}
where the result follows from $b(\alpha),a(\alpha),\|v\|_{q} < \infty$.
\end{proof}
\vs
Notice that the equation
\begin{equation}\label{eq:s2-1}
\mathscr L u=N_\alpha(j u),\quad \alpha \in \br, u\in \mathscr H,
\end{equation}
is equivalent to system \eqref{eq:Lap} in the sense that
$u \in \mathscr H$ is a solution to \eqref{eq:Lap} for some $\alpha \in \br$ if and only if $(\alpha,u) \in \br \times \mathscr H$ satisfies \eqref{eq:s2-1}. In turn, the invertibility of $\mathscr L$ together with Lemma \eqref{lemm:operator_N_WD} imply that the map 
$\mathscr F:\br\times \mathscr H\to \mathscr H$ given by
\begin{equation}\label{eq:Lap1}
\mathscr F(\alpha, u):=u-\mathscr L^{-1}N_\alpha(j u),
\end{equation}
is well-defined. Hence, system \eqref{eq:Lap} is also equivalent to the equation
\begin{equation}\label{eq:bif}
    \mathscr F(\alpha,u) = 0,\quad \alpha\in \br,\; u\in \mathscr H.
\end{equation}
In what follows, we will call \eqref{eq:bif} the \textit{operator equation} associated with \eqref{eq:Lap}.

\begin{lemma}\label{lem:a-priori} \rm
Let $f:\br \times \overline{D} \times V \rightarrow V$ be a continuous function satisfying the assumption \ref{c5}. For every $\alpha \in \br$, there exists a constant $R(\alpha)>0$ such that, if $(\alpha,u) \in \br \times \mathscr H$ is a solution to system \eqref{eq:Lap}, then $\|u\|_{\mathscr H} < R(\alpha)$.
\end{lemma}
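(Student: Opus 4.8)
The plan is to derive the a priori bound directly from the operator equation \eqref{eq:s2-1}, using the estimate already established in the proof of Lemma \ref{lemm:operator_N_WD}. First I would observe that if $(\alpha,u) \in \br \times \mathscr H$ solves \eqref{eq:Lap}, then by the equivalence noted above it satisfies $\mathscr L u = N_\alpha(ju)$, hence $u = \mathscr L^{-1} N_\alpha(ju)$. Since $\mathscr L : \mathscr H \to L^2(D,V)$ is a linear isomorphism, its inverse is bounded, so $\|u\|_{\mathscr H} \le \|\mathscr L^{-1}\| \cdot \|N_\alpha(ju)\|_{L^2}$. Applying the inequality \eqref{eq:estimate} with $v = ju$ gives
\begin{align*}
\|u\|_{\mathscr H} \le \|\mathscr L^{-1}\| \left( a(\alpha)\, \pi^{1/2 - \nu/q} \|ju\|_{L^q}^\nu + b(\alpha)\sqrt{\pi}\right).
\end{align*}

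Next I would use the boundedness of the Sobolev embedding $j : \mathscr H \to L^q(D,V)$, say with operator norm $\|j\| =: C_j$, so that $\|ju\|_{L^q} \le C_j \|u\|_{\mathscr H}$. Writing $t := \|u\|_{\mathscr H}$, $M := \|\mathscr L^{-1}\|$, this yields the scalar inequality
\begin{align*}
t \le M a(\alpha)\, \pi^{1/2-\nu/q} C_j^\nu\, t^\nu + M b(\alpha)\sqrt{\pi} =: P(\alpha)\, t^\nu + Q(\alpha),
\end{align*}
where $P(\alpha), Q(\alpha) > 0$ depend continuously on $\alpha$ (via $a,b$) but not on $u$. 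Since $\nu \in (0,1)$, the function $t \mapsto P(\alpha) t^\nu + Q(\alpha) - t$ tends to $-\infty$ as $t \to +\infty$, so the set of $t \ge 0$ satisfying the inequality is bounded; concretely one can take any $R(\alpha)$ exceeding the largest root of $t = P(\alpha) t^\nu + Q(\alpha)$, for instance $R(\alpha) := 1 + (2P(\alpha))^{1/(1-\nu)} + 2Q(\alpha)$ works, since for $t \ge R(\alpha)$ one has both $P(\alpha) t^\nu \le t/2$ and $Q(\alpha) \le t/2$, forcing $t \le t$, which already fails to be strict — so one picks $R(\alpha)$ strictly larger than this threshold to get the strict inequality $\|u\|_{\mathscr H} < R(\alpha)$.

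There is no serious obstacle here: the argument is a routine bootstrap from sublinearity. The only point requiring a modicum of care is the elementary claim that a nonnegative solution of $t \le P t^\nu + Q$ with $0 < \nu < 1$ is bounded by an explicit constant depending only on $P$ and $Q$; this follows by the case split above (either $P t^\nu \ge t/2$, which bounds $t$ by $(2P)^{1/(1-\nu)}$, or $Q \ge t/2$, which bounds $t$ by $2Q$), and one then fixes $R(\alpha)$ to be any number strictly larger than the resulting bound, which also absorbs the need for strictness. I would also remark that $R(\alpha)$ inherits continuity in $\alpha$ from $a,b$ and can therefore be chosen locally bounded, though this is not needed for the statement as given.
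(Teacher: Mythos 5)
Your argument is correct and rests on the same core idea as the paper's proof: use the fixed-point form $u=\mathscr L^{-1}N_\alpha(ju)$ and the sublinearity estimate to produce a scalar self-bounding inequality $t\le P(\alpha)t^\nu+Q(\alpha)$ with $\nu\in(0,1)$, from which boundedness is immediate. There is, however, a small difference in routing. You work directly with $t=\|u\|_{\mathscr H}$, invoking the boundedness of the Sobolev embedding $j:\mathscr H\to L^q(D,V)$ (with norm $C_j$) to close the loop, so the final inequality is in the $\mathscr H$-norm from the start. The paper instead passes first through $\|u\|_{L^2}$, combining its version of \eqref{eq:estimate} with the trivial inequality $\|u\|_{L^2}\le\|u\|_{\mathscr H}$ to obtain $\|u\|_{L^2}\le c\|u\|_{L^2}^\nu+d$, bounds $\|u\|_{L^2}$ by some $R_0(\alpha)$, and then plugs that bound back into the original estimate to get a bound on $\|u\|_{\mathscr H}$. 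Your version has the advantage of being a direct consequence of the $L^q$-estimate \eqref{eq:estimate} exactly as stated (the paper's intermediate step, which rewrites that estimate with $\|u\|_{L^2}^\nu$ in place of $\|ju\|_{L^q}^\nu$ while keeping the $L^q$ H\"older constant $\pi^{1/2-\nu/q}$, is not literally compatible with \eqref{eq:estimate} and would require a separate H\"older application with a different constant); the price you pay is having to cite the Sobolev embedding constant $C_j$ explicitly, which the paper's two-step route avoids. Your elementary dichotomy argument ($Pt^\nu\ge t/2$ or $Q\ge t/2$) for bounding solutions of $t\le Pt^\nu+Q$ is sound and gives a clean explicit $R(\alpha)$; the paper simply observes that $\psi(t):=t-ct^\nu-d\to+\infty$, which is equivalent. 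Both proofs are valid.
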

\begin{proof}
Assume that $(\alpha,u) \in \br \times \mathscr H$ is a solution to system \eqref{eq:Lap}. Combining \eqref{eq:estimate} with $u = \mathscr L^{-1}N_\alpha(ju)$, one has    
\begin{equation}\label{eq:est1}
\|u\|_{\mathscr H} \le a(\alpha) \pi^{1/2-\nu/q}\|\mathscr L^{-1}\|\, \|u\|_{L_2}^\nu + b(\alpha) \sqrt{\pi}\|\mathscr L^{-1}\|,
\end{equation}
which, together with $\|u\|_{\mathscr H} \ge \|u\|_{L^2}$, implies
\begin{align}\label{eq:s2-3}
\|u\|_{L^2}\le c\|u\|^{\nu}_{L^2}+d,
\end{align}
where $c:= a(\alpha) \pi^{1/2-\nu/q}\|\mathscr L^{-1}\|$, $d:= b(\alpha) \sqrt{\pi}\|\mathscr L^{-1}\|$.
Finally, since $0<\nu<1$, there exists $R_0(\alpha)>0$ such that $\psi(t):=t-ct^\nu-d>0$ for $t\ge R_0(\alpha)$. Consequently, $\|u\|_{L^2}<R_0(\alpha)$, and by \eqref{eq:est1},
\[
\|u\|_{\mathscr H} \le  c\|u\|^{\nu}_{L^2}+d< cR_0(\alpha)^\nu+d=:R(\alpha)
\]
is the required constant.
\end{proof}
\vs
\section{Abstract Local and Global Equivariant Bifurcation} \label{sec:bifurcation_abstract}
In this section we present a concise exposition, following \cite{book-new} and \cite{AED}, of an equivariant Brouwer degree method to study symmetric bifurcation problems. Given a compact Lie group 
$\mathcal G$, an isometric Banach $\mathcal G$-representation $\mathcal H$ and a completely continuous 
$\mathcal G$-equivariant field $\mathcal F: \br \times \mathcal H \rightarrow \mathcal H$, 
we use the Leray-Schauder Equivariant $\mathcal G$-Degree to describe local and global properties of the solution set to the equation,
\begin{equation}\label{eq:bif_loc1}
\mathcal F(\alpha,u)=0,\quad \alpha\in \br,\; u\in \mathcal H.
\end{equation}
\vs
To simplify our exposition and for compatibility with the system of interest \eqref{eq:bif}, we make the following two assumptions.
\vs

\begin{enumerate}[label=($B_\arabic*$)]
    \item\label{b1} The set of \textit{trivial solutions} to \eqref{eq:bif_loc1} is given by
    \begin{align*}
  M:= \{ (\alpha,0) \in \br \times \mathcal H \}.
\end{align*}
\item\label{b2} There exists a continuous family of linear operators $\mathcal A(\alpha): \mathcal H \rightarrow \mathcal H$ such that, for every $\alpha \in \br$, the derivative $D_u \mathcal F(\alpha,0)$ exists and
\begin{align*}
\mathcal A(\alpha) := D \mathcal F(\alpha,0).
\end{align*}
\end{enumerate}
\vs
Solutions
to \eqref{eq:bif_loc1} which do not belong to $M$ are called \textit{nontrivial}. Let $\mathcal S \subset \mathcal F^{-1}(0)$ denote the set of all non-trivial solutions, i.e.
\[
\mathcal S := \{ (\alpha,u) \in \br \times \mathcal H \; : \; \mathcal F(\alpha,u)=0 \text{ and } u \neq 0 \}.
\]
Clearly, the set of non-trivial solutions $\mathcal S \subset \br \times \mathcal H$ is $\mathcal G$-invariant. 

\subsection{The Local Bifurcation Invariant and Krasnosel'skii's Theorem}\label{sec:local-bif-inv}
Formulation of a Krasnosel'skii type local bifurcation result for equation \eqref{eq:bif_loc1} necessitates the introduction of additional notations and terminology
(for more details, the reader is referred to \cite{BBKX,book-new}). Our first definition clarifies what is meant by a bifurcation of the equation \eqref{eq:bif_loc1}.
\begin{definition}\rm
 A trivial solution $(\alpha_0,0) \in M$ is said to be a \textit{bifurcation point} for the equation \eqref{eq:bif_loc1} if every open neighborhood of the point $(\alpha_0,0)$ has non-trivial intersection with $\mathcal S$.   
\end{definition}
It is well-known that a necessary condition for any trivial solution $(\alpha_0,0) \in M$ to be a bifurcation point for the equation \eqref{eq:bif_loc1} is that the linear operator $\mathcal A(\alpha_0):\mathcal H \rightarrow \mathcal H$ is not an isomorphism. This leads to the following definition.
\begin{definition}\rm
A trivial solution $(\alpha_0,0) \in M$ is said to be a \textit{regular point} for the equation \eqref{eq:bif_loc1} if $\mathcal A(\alpha_0)$ is an isomorphism and a \textit{critical point} otherwise. Moreover, a critical point $(\alpha_0,0) \in M$ is said to be \textit{isolated} if there exists a deleted $\epsilon$-neighborhood $0< \vert \alpha - \alpha_0 \vert < \epsilon$ such that for all $\alpha \in (\alpha_0 - \epsilon, \alpha_0 + \epsilon) \setminus \{\alpha_0\}$, the point $(\alpha,0) \in M$ is regular.
\end{definition}
The set of all critical points for equation \eqref{eq:bif_loc1}, denoted $\Lambda$, is called the 
{\it critical set}, i.e.
\begin{align}\label{eq:critical}
 \Lambda:=\{(\alpha,0): \text{ $\mathcal A(\alpha):\mathcal H \rightarrow \mathcal H$ is not an isomorphism}\}.   
\end{align}
The next definition concerns our interest in the continuation of non-trivial solution emerging from a bifurcation point $(\alpha_0,0) \in M$.
\begin{definition}\rm
 A trivial solution $(\alpha_0,0) \in M$ is said to be a {\it branching point} for the equation \eqref{eq:bif_loc1} if there exists a non-trivial continuum $K \subset \overline{S}$ with $K \cap M = \{ (\alpha_0,0) \}$ and the maximal connected set $\mathcal C \subset \overline{S}$ containing the branching point $(\alpha_0,0)$ we call a \textit{branch} of nontrivial solutions bifurcating from the point $(\alpha_0,0)$.    
\end{definition}
Whereas the classical Krasnosiel'skii bifurcation result is only concerned with the existence of a branch of nontrivial solutions for the equation \eqref{eq:bif_loc1} bifurcating from a given critical  point $(\alpha_0,0) \in \Lambda$, the equivariant Krasnosiel'skii bifurcation result, which we employ in this paper, is also concerned with the symmetric properties of such a branch.
\begin{definition} \rm
Given a subgroup $H \leq \mathcal G$, denote by $\mathcal S^H$ the corresponding $H$-fixed point space of non-trivial solutions.
A branch of solutions $\mathcal C$ is said to have \textit{symmetries at least} $(H)$ if $\mathcal C \cap \mathcal S^H \not=\emptyset$.  
\end{definition}
Let $(\alpha_0,0) \in \Lambda$ be an isolated critical point with a deleted $\epsilon$-neighborhood
\[
\{ \alpha \in \br : 0 < | \alpha - \alpha_0 | < \epsilon \},
\]
on which $\mathscr A(\alpha): \mathscr H \rightarrow \mathscr H$ is an isomorphism and choose $\alpha^\pm_0 \in (\alpha_0 - \ve, \alpha_0 + \ve)$ with $\alpha^-_0 \leq \alpha_0 \leq \alpha^+_0$. Since $\mathcal A(\alpha^\pm): \mathscr H \rightarrow \mathscr H$ are non-singular, there exists a number $\delta >0$ sufficiently small such that, adopting the notations $\mathcal F_{\pm}(u) := \mathcal F(\alpha^\pm_0, u)$, $B_{\delta} := \{u\in \mathscr H: \|u\|< \delta\}$, one has
\[
\mathcal F_{\pm}^{-1}(0) \cap \partial B_{\delta} = \emptyset,
\]
and $\mathcal F_{\pm}$ are $B_{\delta}$-admissibly $\mathcal G$-homotopic to $\mathcal A(\alpha^\pm)$. Moreover, since $\mathcal G$ acts isometrically on $\mathscr H$, the ball $B_{\delta}$ is clearly $\mathcal G$-invariant. It follows, from the homotopy property of the $\mathcal G$-equivariant Leray-Schauder degree (cf. Appendix \ref{subsec:G-degree}), that $(\mathcal F_{\pm},B_{\delta})$ are admissible $\mathcal G$-pairs in $\mathscr H$ and also that $\mathcal G\text{-deg}(\mathcal F_{\pm},B_{\delta}) = \mathcal G\text{-deg}(\mathcal A(\alpha^\pm_0), B(\mathscr H))$, where $B(\mathscr H)$ is the open unit ball in $\mathscr H$. We 
call the Burnside Ring element
\begin{align} \label{def:local_bifurcation_invariant}
 \omega_{\mathcal G}(\alpha_0)=\mathcal G\text{-deg}(\mathcal A(\alpha^-_0), B(\mathscr H))-\mathcal G\text{-deg}(\mathcal A(\alpha^+_0), B(\mathscr H)), 
\end{align}
the {\it local bifurcation invariant} at $(\lambda_0,0)$. The reader is referred to \cite{book-new} or \cite{AED} for proof that the invariant \eqref{def:local_bifurcation_invariant} does not depend on the choice of $\alpha^\pm_0 \in \br$ or radius $\delta >0$, and also for the proof of the following local bifurcation result, which is a consequence of the equivariant version of a classical result of K. Kuratowski (cf. \cite{Kura}, Thm. 3, p. 170).
 \vs
 \begin{theorem}\em
 {\bf(M.A. Krasnosel'skii-Type Local Bifurcation)}\label{th:Kras}
 Let $\mathcal F: \br \times \mathscr H \rightarrow \mathscr H$ be a completely continuous $\mathcal G$-equivariant field satisfying the assumptions \ref{b1} and \ref{b2} and with 
 an isolated critical point $(\alpha_0,0)$. If $\omega_{\mathcal G}(\alpha_0) \neq 0$, then
 \begin{itemize}
     \item[(i)] there exists a branch of nontrivial solutions $\mathcal C$ to system \eqref{eq:bif_loc1} with branching point $(\alpha_0,0)$;
     \item[(ii)] moreover, if $(H) \in \Phi_0(G)$ is an orbit type with
     \begin{align*}
     \operatorname{coeff}^{H}
         (\omega_{\mathcal G}(\alpha_0)) \neq 0,
     \end{align*}
 \end{itemize}
then there exists a branch of non-trivial solutions bifurcating from $(\alpha_0,0)$ with symmetries at least $(H)$.
 \end{theorem}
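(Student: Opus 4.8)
The plan is to prove both statements by contradiction, converting the topological assertion ``$(\alpha_0,0)$ is a branching point (with prescribed symmetries)'' into a statement about the $\mathcal G$-degree that the hypothesis on $\omega_{\mathcal G}(\alpha_0)$ forbids. The non-equivariant skeleton is Krasnosel'skii's: if solutions bifurcate from $(\alpha_0,0)$, then no tubular neighbourhood of the trivial branch $M$ around $(\alpha_0,0)$ can be solution-free, and the obstruction to such a tube is precisely the jump $\mathcal G\text{-deg}(\mathcal A(\alpha_0^-),B(\mathscr H))-\mathcal G\text{-deg}(\mathcal A(\alpha_0^+),B(\mathscr H))$; the equivariant refinement records this jump orbit-type by orbit-type. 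The topological input that upgrades a mere bifurcation point to a \emph{branching} point is the equivariant version of Kuratowski's connectedness lemma (cf.\ \cite{Kura}).

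For (i), suppose $(\alpha_0,0)$ is \emph{not} a branching point. Since $\mathcal F$ is a completely continuous field, $\overline{\mathcal S}\cap\overline{B_r}$ is compact for every $r>0$; fix $r$ so small that $\overline{B_r}\cap\Lambda=\{(\alpha_0,0)\}$, where $B_r=B_r(\alpha_0,0)\subset\br\times\mathscr H$, and set $X_r:=(\overline{\mathcal S}\cup\{(\alpha_0,0)\})\cap\overline{B_r}$. The connected component $A$ of $(\alpha_0,0)$ in $X_r$ cannot meet $\partial B_r$: otherwise $A$ would be a non-trivial continuum in $\overline{\mathcal S}$ with $A\cap M\subseteq\Lambda\cap\overline{B_r}=\{(\alpha_0,0)\}$ (bifurcation points of $M$ lie in $\Lambda$), i.e.\ a branching witness. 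Hence, by the equivariant form of Kuratowski's separation lemma (cf.\ \cite{book-new,AED}), $A$ can be separated from the exit set $\partial B_r\cap X_r$ by a relatively clopen partition of $X_r$ and thickened to a bounded open $\mathcal G$-invariant set $\Omega$ with $(\alpha_0,0)\in\Omega$, $\overline\Omega\subset B_r$, $\Omega\cap\Lambda=\{(\alpha_0,0)\}$ and $\partial\Omega\cap\overline{\mathcal S}=\emptyset$. Now one invokes the standard realization of the local invariant: by surgically deleting from $\Omega$ a solution-free collar of $M$ near the endpoints of its $\alpha$-range (legitimate because regular trivial points are not bifurcation points and, by complete continuity, are uniformly isolated over compacta), one obtains a $\mathcal G$-admissible map $\widehat{\mathcal F}$ on a bounded open $\mathcal G$-invariant set $\widehat\Omega$, whose zeros correspond to the non-trivial solutions of $\mathcal F=0$ in $\Omega$ and with $\mathcal G\text{-deg}(\widehat{\mathcal F},\widehat\Omega)=\omega_{\mathcal G}(\alpha_0)$, independently of $\Omega$. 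Running the construction with $r\downarrow 0$ and using additivity of the degree over the nested annuli (which contain no point of $\Lambda$ and hence contribute $0$) — this is the heart of the Kuratowski-type argument of \cite{book-new,AED} — one concludes that $\omega_{\mathcal G}(\alpha_0)=0$, contradicting the hypothesis. Thus $(\alpha_0,0)$ is a branching point and a branch $\mathcal C$ exists.

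For (ii), let $(H)\in\Phi_0(\mathcal G)$ with $\operatorname{coeff}^{H}(\omega_{\mathcal G}(\alpha_0))\neq 0$, write $W(H):=N_{\mathcal G}(H)/H$, and let $\mathscr H^H$ be the $H$-fixed-point subspace. Then $\mathscr H^H$ is an isometric Banach $W(H)$-representation, $\mathcal F^H:=\mathcal F|_{\br\times\mathscr H^H}$ is a completely continuous $W(H)$-equivariant field satisfying \ref{b1} and \ref{b2} with $D\mathcal F^H(\alpha,0)=\mathcal A(\alpha)|_{\mathscr H^H}=:\mathcal A^H(\alpha)$ (and $\mathcal A^H(\alpha)$ inherits invertibility from $\mathcal A(\alpha)$, since the inverse of an equivariant isomorphism preserves $\mathscr H^H$). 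By the recurrence formula expressing $\operatorname{coeff}^{H}$ of a $\mathcal G$-degree through an equivariant degree of the restriction to the $H$-fixed-point space (cf.\ \cite{book-new}), the local invariant $\omega_{W(H)}(\alpha_0)$ of $\mathcal F^H$ has a non-zero coefficient; in particular $\omega_{W(H)}(\alpha_0)\neq 0$ and $\mathcal A^H(\alpha_0)$ is singular, so $(\alpha_0,0)$ is an isolated critical point of $\mathcal F^H$. Applying part (i) to $\mathcal F^H$ yields a branch $\mathcal C^H$ of non-trivial solutions of $\mathcal F^H=0$ bifurcating from $(\alpha_0,0)$; its non-trivial members solve $\mathcal F=0$ and lie in $\mathscr H^H$, hence belong to $\mathcal S^H$, and since $\mathcal C^H$ is connected and contains $(\alpha_0,0)$ it is contained in the branch $\mathcal C$ of \eqref{eq:bif_loc1} through $(\alpha_0,0)$. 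Therefore $\emptyset\neq\mathcal C^H\setminus\{(\alpha_0,0)\}\subset\mathcal C\cap\mathcal S^H$, i.e.\ $\mathcal C$ has symmetries at least $(H)$.

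The step I expect to be the main obstacle is, inside the proof of (i), the passage from ``$\partial\Omega\cap\overline{\mathcal S}=\emptyset$ with $(\alpha_0,0)$ not a branching point'' to ``$\omega_{\mathcal G}(\alpha_0)=0$'': this is exactly where the connectedness content of Kuratowski's theorem has to be wired into the degree, and it rests on both the precise bubble construction realizing the local invariant and a careful accounting of how the (possibly disconnected) bifurcating solutions distribute among shrinking isolating neighbourhoods — the details for which I would ultimately take from \cite{book-new} or \cite{AED}. By comparison the equivariant bookkeeping (making $\Omega$ and the collar $\mathcal G$-invariant, and the reduction of (ii) to (i) via $\mathscr H^H$ and the recurrence formula) is routine.
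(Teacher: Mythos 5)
Your overall strategy — argue by contradiction, compactify $\overline{\mathcal S}$ near $(\alpha_0,0)$ using complete continuity, invoke the equivariant Kuratowski separation lemma to build a $\mathcal G$-invariant isolating neighbourhood $\Omega$, realize $\omega_{\mathcal G}(\alpha_0)$ as a degree over $\Omega$ and show it vanishes, and for (ii) pass to $\mathscr H^H$ with $W(H)=N(H)/H$ — is exactly the structure of the proof the paper relies on. (Note the paper does not prove Theorem~\ref{th:Kras} itself; it explicitly defers to \cite{book-new} and \cite{AED}, remarking only that the result rests on Kuratowski's theorem, which you also identified.) However there are two places where what you wrote would not survive being made precise.

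First, in (i), the degree-vanishing step as described does not work. You argue that the annuli between shrinking balls ``contain no point of $\Lambda$ and hence contribute $0$'' to the degree. But the annuli $B_{r_1}\setminus \overline{B_{r_2}}$ certainly may contain non-trivial zeros of $\mathcal F$ (they avoid critical \emph{trivial} points, not solutions), so the degree over them has no a priori reason to vanish, and additivity does not give you what you want. What is actually needed, and what you only gesture at with the phrase ``bubble construction,'' is a \emph{complementing function}: a $\mathcal G$-invariant scalar $\vartheta:\overline\Omega\to\br$, negative on the trivial segment $\overline\Omega\cap M$ and positive outside a compact tube around it, chosen so that the suspended map $\Phi(\alpha,u):=(\vartheta(\alpha,u),\mathcal F(\alpha,u))$ is $\Omega$-admissible. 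One then evaluates $\mathcal G\text{-deg}(\Phi,\Omega)$ twice: by excision to a small neighbourhood of $(\alpha_0,0)$ and linearization it equals $\omega_{\mathcal G}(\alpha_0)$; and by homotoping $\vartheta$ to a strictly positive function — a homotopy that is admissible precisely because the Kuratowski separation prevents any non-trivial zero from reaching $\partial\Omega$ — it equals $0$. Your ``deleting a collar'' picture cannot be made admissible, since near $(\alpha_0,0)$ the map $\mathcal F$ still vanishes on the remaining piece of $M$; the suspension by $\vartheta$ is what removes the degeneracy.

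Second, in (ii), the reduction to $\mathcal F^H$ on $\mathscr H^H$ is the right move, but the justification you give is off. The recurrence formula (\ref{eq:RF-0}) relates $\operatorname{coeff}^H(\gdeg(f,\Omega))$ to the \emph{ordinary} Brouwer degree $\deg(f^H,\Omega^H)$ together with the higher coefficients $n_K$, $(K)>(H)$, which appear with signs and can cancel; it does not by itself identify $\operatorname{coeff}^H$ of $\omega_{\mathcal G}(\alpha_0)$ with a coefficient of $\omega_{W(H)}(\alpha_0)$. What you actually need (and what \cite{book-new,AED} prove as a separate lemma) is that the fixed-point restriction homomorphism $A(\mathcal G)\to A(W(H))$ sends $\gdeg(f,\Omega)\mapsto W(H)\text{-deg}(f^H,\Omega^H)$ and carries the $(H)$-coefficient to the $(\{e\})$-coefficient. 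With that lemma in hand, the rest of your (ii) — invertibility of $\mathcal A^H(\alpha)$ for regular $\alpha$, applying (i) to $\mathcal F^H$, and the inclusion $\mathcal C^H\subset\mathcal C$ by maximality — is sound.
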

 \vs

\subsection{Global Bifurcation and the Rabinowitz Alternative}\label{sec:rab-alt}
In order to employ the Leray-Schauder $\mathcal G$-equivariant degree to describe the global properties of a branch of non-trivial solutions bifurcating from an isolated critical point of the equation \eqref{eq:bif_loc1}, we need to make an additional assumption. 
\begin{enumerate}[label=($B_\arabic*$)]
\setcounter{enumi}{2}
\item\label{b3} The critical set $\Lambda \subset M$ (given by \eqref{eq:critical}) is discrete.
\end{enumerate}
Notice that the local bifurcation invariant $\omega_{\mathcal G}(\alpha_0)$ at any critical point $(\alpha_0,0) \in \Lambda$ is well-defined under assumption \ref{b3}. Moreover, if $\mathcal U \subset \br \times \mathscr H$ is an open bounded $\mathcal G$-invariant set, then its intersection with the critical set is finite. These observations will be important for the statement of the following global bifurcation result, the proof of which can be found in \cite{book-new}, \cite{AED}.
\vs
\begin{theorem}\label{th:Rabinowitz-alt}{\bf (The Rabinowitz Alternative)} \rm
Suppose that $\mathcal F: \br \times \mathscr H \rightarrow \mathscr H$ is a completely continuous $\mathcal G$-equivariant field satisfying conditions  \ref{b1}--\ref{b3} and let $\mathcal U \subset \br \times \mathscr H$  be an open bounded $\mathcal G$-invariant set with $\partial \mathcal U \cap \Lambda = \emptyset$. If $\mathcal C$ is a branch of nontrivial solutions to \eqref{eq:bif_loc1} bifurcating from the critical point $(\alpha_0,0) \in \mathcal U \cap \Lambda$, then one has the following alternative:
\begin{enumerate}[label=$(\alph*)$]
\item  either $\mathcal C \cap \partial \mathcal U \neq \emptyset$;
    \item or there exists a finite set
    \begin{align*}
        \mathcal C \cap \Lambda = \{ (\alpha_0,0),(\alpha_1,0), \ldots, (\alpha_n,0) \},
    \end{align*}
    satisfying the following relation
    \begin{align*}
        \sum\limits_{k=1}^n \omega_{\mathcal G}(\alpha_k) = 0.
    \end{align*}
\end{enumerate} 
\end{theorem}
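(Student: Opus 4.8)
The plan is to prove the dichotomy by contradiction: assume that alternative $(a)$ fails, i.e. $\mathcal C\cap\partial\mathcal U=\emptyset$, and deduce $(b)$. Write $\mathcal F=\mathrm{Id}-\mathcal K$ with $\mathcal K$ completely continuous, so that every solution satisfies $u=\mathcal K(\alpha,u)$ and the closed set $\mathcal F^{-1}(0)\cap\overline{\mathcal U}$ is contained in the product of the bounded $\alpha$-range of $\overline{\mathcal U}$ with the compact set $\overline{\mathcal K(\overline{\mathcal U})}$; it is therefore compact. Since $\mathcal C$ is a closed connected subset of it that meets $\mathcal U$ but not $\partial\mathcal U$, $\mathcal C$ is compact and lies entirely in $\mathcal U$. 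Moreover $\mathcal C\subset\overline{\mathcal S}$, so each trivial point of $\mathcal C$ is an accumulation point of nontrivial solutions and hence a critical point; thus $\mathcal C\cap M\subset\Lambda$, and by discreteness of $\Lambda$ (assumption \ref{b3}) this set is finite, which is the first half of $(b)$. Write $\lambda_1<\dots<\lambda_m$ for the corresponding critical parameter values, so that $\mathcal C\cap\Lambda=\{(\lambda_1,0),\dots,(\lambda_m,0)\}$.

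To obtain the degree relation I would isolate $\mathcal C$. Note that $\mathcal C$ is $\mathcal G$-invariant, being the connected component of $(\lambda_j,0)$ — any $j$ — in the $\mathcal G$-invariant set $\overline{\mathcal S}$ (recall $\mathcal G$ acts trivially on the parameter and fixes the origin of $\mathscr H$). Applying a Kuratowski-type separation lemma for components of compact metric spaces — the equivariant form of which underlies Theorem \ref{th:Kras} — and choosing the separating clopen piece $\mathcal G$-invariantly, one encloses it in a thin $\varepsilon$-tube; since $\mathcal G$ acts isometrically, that tube is again $\mathcal G$-invariant. For $\varepsilon$ small this produces an open, bounded, $\mathcal G$-invariant set $\Omega$ with $\mathcal C\subset\Omega$, $\overline\Omega\subset\mathcal U$, $\mathcal F^{-1}(0)\cap\partial\Omega=\emptyset$, $\overline\Omega\cap\Lambda=\{(\lambda_j,0)\}_{j=1}^m$, and such that the horizontal slice $\Omega_t:=\{u:(t,u)\in\Omega\}$ carries no solution of $\mathcal F(t,\cdot)$ once $t<\lambda_1$ or $t>\lambda_m$. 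Because $\partial\Omega$ is solution-free, the slice degree $d(t):=\mathcal G\text{-deg}(\mathcal F(t,\cdot),\Omega_t)$ is defined for every $t\notin\{\lambda_1,\dots,\lambda_m\}$.

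Now telescope. By the generalized homotopy property of the Leray–Schauder $\mathcal G$-degree applied to the tube $\bigcup_t\{t\}\times\Omega_t$ over each component of $\br\setminus\{\lambda_1,\dots,\lambda_m\}$, $d$ is constant on each of $(-\infty,\lambda_1),(\lambda_1,\lambda_2),\dots,(\lambda_m,+\infty)$ and equals $0$ on the two unbounded ones. Fix $j$, pick regular points $\lambda_j^\pm$ just to the left and right of $\lambda_j$, and excise a small $\mathcal G$-invariant ball $B_\delta$ about $0$, on whose boundary $\mathcal F(\lambda_j^\pm,\cdot)$ has no zeros; additivity gives
\[
d(\lambda_j^\pm)=\mathcal G\text{-deg}\bigl(\mathcal F(\lambda_j^\pm,\cdot),B_\delta\bigr)+\mathcal G\text{-deg}\bigl(\mathcal F(\lambda_j^\pm,\cdot),\Omega_{\lambda_j^\pm}\setminus\overline{B_\delta}\bigr).
\]
Linearization at the regular point identifies the first term with $\mathcal G\text{-deg}(\mathcal A(\lambda_j^\pm),B(\mathscr H))$, while a generalized homotopy across the level $\lambda_j$ — with $\delta$ chosen so that no nontrivial solution of norm $\delta$ occurs for $t$ near $\lambda_j$ — shows the two ``exterior'' terms agree. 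By the very definition \eqref{def:local_bifurcation_invariant}, $d(\lambda_j^-)-d(\lambda_j^+)=\omega_{\mathcal G}(\lambda_j)$. Since $d(\lambda_j^+)=d(\lambda_{j+1}^-)$ and $d$ vanishes on the outer intervals, summing over $j$ telescopes to $\sum_{j=1}^m\omega_{\mathcal G}(\lambda_j)=0$, which is the relation asserted in $(b)$.

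The crux is the middle two steps: building the $\mathcal G$-invariant isolating tube $\Omega$ with a solution-free boundary — this is precisely where complete continuity of $\mathcal K$ (hence compactness of the solution set), discreteness of $\Lambda$ (\ref{b3}), and the hypothesis $\partial\mathcal U\cap\Lambda=\emptyset$ are all genuinely needed — and checking that the jump of the slice degree across each $\lambda_j$ is exactly $\omega_{\mathcal G}(\lambda_j)$, uncontaminated by the trivial branch or by other solution sheets passing through that level. With these topological preparations secured, the remainder is routine bookkeeping with the additivity, excision, and homotopy properties of the equivariant Leray–Schauder degree recalled in Appendix \ref{subsec:G-degree}.
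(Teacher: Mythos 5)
The paper itself does not prove Theorem~\ref{th:Rabinowitz-alt}; it cites \cite{book-new} and \cite{AED} for the proof. So I am judging your argument on its own, and it does follow the standard Whyburn-separation plus telescoping-slice-degree strategy that those references use. The compactness step, the identification of $\mathcal C\cap M$ with a finite subset of $\Lambda$ via \ref{b3}, and the final telescoping arithmetic are all correct in outline.

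However, there is a genuine gap in the construction of the isolating tube. You assert an open bounded $\mathcal G$-invariant $\Omega$ with $\mathcal C\subset\Omega$, $\overline\Omega\subset\mathcal U$, \emph{and} $\mathcal F^{-1}(0)\cap\partial\Omega=\emptyset$. These three conditions are mutually inconsistent: $\mathcal C$ contains the trivial points $(\lambda_j,0)$, so the open set $\Omega$ contains a neighborhood of each of them and hence an open segment of the trivial branch $M=\br\times\{0\}$; since $M$ is unbounded and $\Omega$ is bounded, $M$ must exit through $\partial\Omega$, forcing $\mathcal F^{-1}(0)\cap\partial\Omega\neq\emptyset$. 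For the same reason the claim that $\Omega_t$ is solution-free for $t<\lambda_1$ or $t>\lambda_m$ fails — if $(\lambda_1,0)\in\Omega$ then $(t,0)\in\Omega$ for $t$ just below $\lambda_1$, and $0$ is always a solution — and the slice degree $d(t)$ is simply not defined at those $t$ for which $(t,0)\in\partial\Omega$. What the Whyburn/Kuratowski lemma actually gives you (applied to $\overline{\mathcal S}\cap\overline{\mathcal U}$ rather than to the full zero set) is an $\Omega$ on whose boundary there are \emph{no nontrivial} solutions; the trivial ones on $\partial\Omega$ then occur at regular parameter values, which must be shown using discreteness of $\Lambda$ together with the hypothesis $\partial\mathcal U\cap\Lambda=\emptyset$ and a further shrinking of $\Omega$. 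From there the telescoping goes through, but one must run the excision/additivity bookkeeping on $\Omega$ with a small tube about $M$ removed (or, equivalently, work on $[a,b]\times\mathscr H$ with $a<\lambda_1<\dots<\lambda_m<b$ regular and $(a,0),(b,0)\in\partial\Omega$, excising a ball about the origin at the endpoints), precisely because $\partial\Omega$ is \emph{not} zero-free. Your exterior-term homotopy across each $\lambda_j$ is the right idea, but you never justify that the slice degree is defined and constant on the open intervals between critical values when the trivial branch is passing through $\partial\Omega$ — and that is the point where the whole argument must be tightened.

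As a smaller point, your final identity $\sum_{j=1}^m\omega_{\mathcal G}(\lambda_j)=0$ sums over \emph{all} critical parameters in $\mathcal C$, whereas the statement as printed indexes the set from $0$ to $n$ but sums from $1$ to $n$; the version you derived is the mathematically correct one, and the printed index range is almost certainly a typo in the statement.
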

\vs
\begin{remark} \rm
Suppose that Theorem \ref{th:Kras} is used to demonstrate the existence of a branch $\mathcal C$ of nontrivial solutions to \eqref{eq:bif_loc1} bifurcating from a critical point $(\alpha_0,0)$ and that certain conditions are met such that, for any open bounded $\mathcal G$-invariant neighborhood $\mathcal U \ni (\alpha_0,0)$ with $\partial \mathcal U \cap \Lambda = \emptyset$, the alternative \ref{alt_b} is impossible. Then, according to Theorem \ref{th:Rabinowitz-alt}, the branch $\mathcal C$ must be unbounded.
\end{remark}
\vs
\section{Local and Global  Bifurcation of Non-radial Solutions in \eqref{eq:Lap}} \label{sec:bifurcation}
Returning to the functional reformulation of our original problem described in Section \ref{sec:functional_reformulation}, consider the product group $G := O(2) \times \Gamma \times \mathbb{Z}_2$ and notice that the Sobolev space $\mathscr H$ is a natural Banach $G$-representation with respect to the isometric $G$-action $G \times \mathscr H \rightarrow \mathscr H$ given by
\begin{align} \label{def:isometric_action}
(\theta,\gamma,\pm1)u(z)&:=\pm \gamma u(e^{i\theta} \cdot z), \quad z\in D, \;\; u\in \mathscr H; \\
(\kappa,\gamma,\pm 1) u(z)&:=\pm \gamma u(\overline{z}), \quad \gamma\in \Gamma, \; \kappa \in O(2), \; \theta \in SO(2), \nonumber
\end{align}
where $\overline{z}$ is the complex conjugation of $z \in D$ and $e^{i\theta} \cdot z$ is the standard complex multiplication. 
\vs
\begin{remark}\rm  \label{rm1}
Under assumptions \ref{c1}-\ref{c5}, the nonlinear operator $\mathscr F: \br \times \mathscr H \rightarrow \mathscr H$ given by \eqref{eq:Lap1} is a completely continuous $G$-equivariant field, differentiable at $0\in \mathscr H$ with 
\begin{align*}
    D\mathscr F(\alpha,0)=  \operatorname{Id} -\mathscr L^{-1} A(
\alpha):\mathscr H \rightarrow \mathscr H,
\end{align*}
where $A(\alpha): V \rightarrow V$ is the linearization of the map $f(\alpha,z,u)$ from the equation \eqref{eq:Lap} at the origin (see for example see \cite{Kra1,BHKR}). For convenience, we introduce the notation
\begin{equation}\label{eq:Linearization1}
\mathscr A(\alpha)(u):=u-\mathscr L^{-1} A(
\alpha)u, \quad u\in \mathscr H.
\end{equation}
\end{remark}
\vs
Before approaching the question of bifurcation in the equation \eqref{eq:Lap}, we must derive a workable formula for the computation of the degree $\gdeg(\mathscr A(\alpha), B(\mathscr H))$, where $B(\mathscr  H):= \{ u \in \mathscr H : \| u \|_{\mathscr H} < 1 \}$ is the open unit ball in $\mathscr H$ and $\alpha \in \br$ is any parameter value for which $\mathscr A(\alpha): \mathscr H \rightarrow \mathscr H$ is an isomorphism.
\vs
Assuming that a complete list of the irreducible $\Gamma$-representations $\{ \mathcal V_j \}_{j=1}^r$ is made available we denote by $\{ \mathcal V_j^- \}_{j=1}^r$
the corresponding list of irreducible $\Gamma \times \mathbb{Z}_2$-representations, where the superscript is meant to indicate that each irreducible $\Gamma$-representation has been equipped with the antipodal $\mathbb{Z}_2$-action in the standard way (cf. \cite{book-new}, \cite{AED}). As a $\Gamma$-representation, $V = \br^k$ is also a natural $\Gamma \times \mathbb{Z}_2$-representation with the $\Gamma \times \mathbb{Z}_2$-isotypic decomposition
\begin{align*}
  V = V_1 \oplus V_2 \oplus \cdots \oplus V_r,  
\end{align*}
where each $\Gamma \times \mathbb{Z}_2$-isotypic component component $V_j$ is modeled on the irreducible $\Gamma \times \mathbb{Z}_2$-representation $\mathcal V_j^-$ in the sense that $V_j$ is equivalent to the direct sum of some number of copies of $\mathcal V_j^-$, i.e.
\begin{align*}
    V_j \simeq \mathcal V_j^- \oplus \cdots \oplus \mathcal V_j^-.
\end{align*}
The exact number of irreducible $\Gamma$-representations $\mathcal V_j^-$ `contained' in the $\Gamma \times \mathbb{Z}_2$-isotypic component $V_j$ is called the \textit{$\mathcal V_j^-$-isotypic multiplicity} of $V$ and is calculated according to the ratio,
\begin{align*}
 m_j:= \dim V_j / \dim \mathcal{V}_j^-, \quad j \in \{1,2,\ldots,r\}. 
\end{align*}
To simplify our computations, we introduce an additional condition on the linearization of \eqref{eq:Lap}:
\begin{enumerate} [label=($A_0$)] 
    \item\label{c0} For each $j \in \{1,2, \ldots, r\}$ there exists a continuous map $\mu_j: \br \rightarrow \br$ with
    \[
    A_j(\alpha) := A(\alpha)|_{V_j} = \mu_j(\alpha) \operatorname{Id}|_{V_j}.
    \]
\end{enumerate}
On the other hand, for every $m \in \bn$ we denote by $\mathcal W_m \simeq \bc$ the irreducible $O(2)$-representation equipped with the $O(2)$-action
\[
(\theta,z) := e^{i m \theta} \cdot z, \; (\kappa,z) := \overline{z}, \; z \in \mathcal W_m,
\]
and by $\mathcal W_0 \simeq \br$ the irreducible $O(2)$-representation with the trivial $O(2)$-action.
\vs
In pursuit of a $G$-isotypic decomposition of $\mathscr H$, let us consider the spectrum of the Laplacian operator \eqref{eq:operator-L}, understood in this context as an unbounded operator in $L^2(D;V)$. Namely, one has
\[
\sigma(\mathscr L)=\{s_{nm}: n\in \bn, \; m=0,1,2,\dots\},
\]
where $\sqrt{s_{nm}}$ denotes the $n$-th positive zero of the $m$-th Bessel function of the first kind $J_m$. Corresponding to each eigenvalue 
$s_{nm} \in \sigma(\mathscr L)$, there is an associated eigenspace
$\mathscr E_{nm} \subset \mathscr H$ which can be expressed, using standard polar coordinates $(r,\theta)$, as follows
\begin{align*}
  \mathscr E_{nm}:=\left\{J_m(\sqrt{s_{nm}}r)\Big(\cos(m\theta)\vec a+\sin(m\theta)\vec b\Big): \vec a,\, \vec b\in V \right\}.  
\end{align*}
Clearly, one has
\[
\mathscr E_{nm} \simeq \mathcal W_m \otimes V,
\]
for each $(n,m) \in \bn \times \bn \cup \{0\}$ such that $\mathscr H$ admits the $O(2)$-isotypic decomposition
\begin{align*} 
 \mathscr H := \overline{\bigoplus\limits_{m=0}^\infty \mathscr H_m}, \quad \mathscr H_m:= \overline{\bigoplus\limits_{n=1}^\infty \mathscr E_{nm}}, 
\end{align*}
where the closure is taken in $\mathscr H$. In particular, adopting the notations
\begin{equation}\label{eq:enmj}
\mathscr E_{nm}^j :=\left\{J_m(\sqrt{s_{nm}}r)\Big(\cos(m\theta)\vec a+\sin(m\theta)\vec b\Big): \vec a,\, \vec b\in V_j \right\}, 
\end{equation}
and
\begin{align*}
\mathscr A_{n,m}^j(\alpha):= \mathscr A(\alpha)|_{\mathscr E_{nm}^j},
\end{align*}
one has
\[
\mathscr E_{nm}^j \simeq \mathcal W_m \otimes \mathcal V_j^-, \; (n,m) \in \bn \times \bn \cup \{0\}, \; j \in \{1,2,\ldots,r\}.
\]
Hence, $\mathscr H$ admits the $G$-isotypic
\begin{align*} 
   \mathscr H = \bigoplus_{j=1}^r \overline{\bigoplus\limits_{m=0}^\infty \mathscr H_{m,j}}, \quad \mathscr H_{m,j}:=\overline{\bigoplus\limits_{n=1}^\infty \mathscr E_{nm}^j}. 
\end{align*}
To be clear, each $G$-isotypic component $\mathscr H_{m,j}$ is modeled on the irreducible $G$-representation $\mathcal V_{m,j}:= \mathcal W_m \otimes \mathcal V_j^{-}$.
\vs
\begin{lemma} \label{lem:spectral_decomposition}
Under assumption \ref{c0}, each eigenvalue of the linear operator $\mathscr A(\alpha): \mathscr H \rightarrow \mathscr H$ is of the form
\begin{align*}
\xi_{n,m,j}(\alpha) := 1 - \frac{\mu_j(\alpha)}{s_{nm}},
\end{align*}
where $n \in \mathbb{N}$, $m \in \mathbb{N} \cup \{0\}$, $j \in \{1,2,\ldots,r\}$ and $\mu_j(\alpha) \in \sigma(A_j(\alpha))$.
\end{lemma}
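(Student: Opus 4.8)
The plan is to diagonalize $\mathscr{A}(\alpha)=\operatorname{Id}-\mathscr{L}^{-1}A(\alpha)$ from \eqref{eq:Linearization1} against the $G$-isotypic decomposition $\mathscr{H}=\bigoplus_{j=1}^r\overline{\bigoplus_{m\ge 0}\mathscr{H}_{m,j}}$, $\mathscr{H}_{m,j}=\overline{\bigoplus_{n\ge1}\mathscr{E}_{nm}^j}$, recalled just before the statement, and to show that $\mathscr{A}(\alpha)$ acts on each finite-dimensional block $\mathscr{E}_{nm}^j$ of \eqref{eq:enmj} as a scalar. First I would verify the two relevant invariance facts. Since $\mathscr{E}_{nm}^j\subset\mathscr{E}_{nm}$ lies inside the $s_{nm}$-eigenspace of the Dirichlet Laplacian $\mathscr{L}$ (the eigenfunctions $J_m(\sqrt{s_{nm}}r)(\cos m\theta\,\vec a+\sin m\theta\,\vec b)$ being smooth, hence in $\mathscr{H}$), the bounded inverse $\mathscr{L}^{-1}$ restricts to $\tfrac{1}{s_{nm}}\operatorname{Id}$ on $\mathscr{E}_{nm}^j$. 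On the other hand, every element of $\mathscr{E}_{nm}^j$ takes values in the $\Gamma\times\mathbb{Z}_2$-isotypic component $V_j$, on which assumption \ref{c0} forces $A(\alpha)$ to act as the scalar $\mu_j(\alpha)$; since the pointwise action of $A(\alpha)$ leaves the scalar radial--angular factor untouched, $A(\alpha)$ maps $\mathscr{E}_{nm}^j$ into itself and restricts there to $\mu_j(\alpha)\operatorname{Id}$. Combining the two, $\mathscr{A}_{n,m}^j(\alpha)=\mathscr{A}(\alpha)|_{\mathscr{E}_{nm}^j}=\bigl(1-\tfrac{\mu_j(\alpha)}{s_{nm}}\bigr)\operatorname{Id}=\xi_{n,m,j}(\alpha)\operatorname{Id}$, so every nonzero $u\in\mathscr{E}_{nm}^j$ is an eigenvector of $\mathscr{A}(\alpha)$ for $\xi_{n,m,j}(\alpha)$, which in particular shows each $\xi_{n,m,j}(\alpha)$ is realized.

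For the \emph{stated} direction --- that there are no other eigenvalues --- I would argue as follows. Suppose $\mathscr{A}(\alpha)u=\lambda u$ with $u\ne 0$, and expand $u=\sum_{n,m,j}u_{nm}^j$ with $u_{nm}^j\in\mathscr{E}_{nm}^j$, the series converging in $\mathscr{H}$ (this is precisely the $G$-isotypic/spectral expansion). By continuity of $\mathscr{A}(\alpha)$ and the previous paragraph, $\mathscr{A}(\alpha)u=\sum_{n,m,j}\xi_{n,m,j}(\alpha)u_{nm}^j$, so the eigenvalue equation becomes $\sum_{n,m,j}(\xi_{n,m,j}(\alpha)-\lambda)u_{nm}^j=0$. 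Because the subspaces $\mathscr{E}_{nm}^j$ are mutually $L^2$-orthogonal and $\mathscr{H}\hookrightarrow L^2(D,V)$ continuously, the coefficients in this expansion are unique, whence $(\xi_{n,m,j}(\alpha)-\lambda)u_{nm}^j=0$ for every triple; choosing a triple with $u_{nm}^j\ne 0$ gives $\lambda=\xi_{n,m,j}(\alpha)$. One may alternatively invoke that $\mathscr{L}^{-1}A(\alpha)$ is completely continuous, so by Riesz--Schauder theory $\sigma(\mathscr{A}(\alpha))\setminus\{1\}$ consists of eigenvalues of finite multiplicity, and the block structure then identifies this set with $\{\xi_{n,m,j}(\alpha)\}$. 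Since \ref{c0} gives $\sigma(A_j(\alpha))=\{\mu_j(\alpha)\}$, this is the asserted form $\xi_{n,m,j}(\alpha)=1-\mu_j(\alpha)/s_{nm}$ with $\mu_j(\alpha)\in\sigma(A_j(\alpha))$.

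The only real obstacle is bookkeeping around these invariance claims and the directness of the expansion: one must note that the eigenfunctions of the Dirichlet Laplacian listed before the lemma genuinely lie in $\mathscr{H}$ so that $\mathscr{L}^{-1}$ is the honest bounded inverse there, and that the pointwise linear map $A(\alpha)$ --- a priori only known to commute with the $\Gamma\times\mathbb{Z}_2$-action, hence to preserve each $\mathscr{H}_{m,j}$ --- in fact preserves the individual finite-dimensional blocks $\mathscr{E}_{nm}^j$ and acts as a single scalar on them. This last point is exactly where assumption \ref{c0} is indispensable: it is what upgrades each $\mathscr{E}_{nm}^j$ from a mere $\mathscr{A}(\alpha)$-invariant subspace to an eigenspace, by making $A(\alpha)|_{V_j}$ a scalar rather than a general linear endomorphism of $V_j$.
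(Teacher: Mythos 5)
Your proof is correct and follows essentially the same strategy as the paper's: decompose $\mathscr{H}$ into the isotypic blocks $\mathscr{E}_{nm}^j$, observe that $\mathscr{A}(\alpha)$ preserves each block, and compute that it acts there as the scalar $1-\mu_j(\alpha)/s_{nm}$. You supply the details the paper defers to references — in particular, that $\mathscr{L}^{-1}$ acts as $s_{nm}^{-1}\operatorname{Id}$ on $\mathscr{E}_{nm}^j$ and that \ref{c0} forces $A(\alpha)$ to act as $\mu_j(\alpha)\operatorname{Id}$ there — and you also give a clean argument (via $L^2$-orthogonality or Riesz--Schauder) for why no other eigenvalues can occur, a point the paper's terse assertion $\sigma(\mathscr{A}(\alpha))=\bigcup\sigma(\mathscr{A}_{n,m}^j(\alpha))$ passes over.
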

\begin{proof}
Indeed, since $\mathscr A(\alpha)$ is $G$-equivariant, one has 
\begin{align*}
   \mathscr A_{n,m}^j(\alpha): \mathscr E_{nm}^j \rightarrow \mathscr E_{nm}^j 
\end{align*}
such that
\begin{align*} 
\sigma( \mathscr A(\alpha)) =  \bigcup\limits_{m=0}^{\infty} \bigcup\limits_{n=1}^{\infty} \bigcup\limits_{j=1}^{r} \sigma(\mathscr A^j_{n,m}(\alpha)), 
\end{align*}
where one easily obtains (see for example \cite{BHKR,book-new})
\begin{align*} 
 \sigma (\mathscr A^j_{n,m}(\alpha)) &= \left\{ 1 - \frac{\mu_j(\alpha)}{s_{nm}} \; : \; \mu_j(\alpha) \in \sigma(A_j(\alpha)) \right\}.   
\end{align*}
\end{proof}
\vs
The product property of the Leray-Schauder $G$-equivariant degree (cf. Appendix \ref{subsec:G-degree}) permits us to express $\gdeg(\mathscr A(\alpha), B(\mathscr H))$, at any regular point $\alpha \in \br$ of equation \eqref{eq:bif}, in terms of a Burnside ring product of the Leray-Schauder $G$-equivariant degrees of the various restrictions $\mathscr A_{n,m}^j(\alpha): \mathscr E^j_{nm} \rightarrow \mathscr E^j_{nm}$ of the $G$-equivariant linear isomorphism $\mathscr A(\alpha): \mathscr H \rightarrow \mathscr H$ to the $G$-subrepresentations $\mathscr E^j_{nm}$ on their respective open unit balls $B(\mathscr  E^j_{nm}):= \{ u \in \mathscr E^j_{nm} \; : \; \| u \|_{\mathscr H} < 1 \}$ as follows
\begin{equation}\label{fact_1}
  \gdeg(\mathscr A(\alpha), B(\mathscr H)) = \prod\limits_{j=1}^r \prod\limits_{m=0}^\infty \prod\limits_{n=1}^\infty \gdeg( \mathscr A_{n,m}^j(\alpha), B(\mathscr E_{nm}^j)). 
\end{equation}
Notice that one has $\gdeg( \mathscr A_{n,m}^j(\alpha), B(\mathscr E_{nm}^j))=(G)$ for almost all indices $m$, $n$ and $j$, so that the product \eqref{fact_1} is well-defined.
Indeed, each Leray-Schauder $G$-equivariant degree $\gdeg( \mathscr A_{n,m}^j(\alpha), B(\mathscr E_{nm}^j))$ is fully specified by the $\mathcal V_j^-$-isotypic multiplicities $\{ m_j \}_{j=1}^r$ together with the real spectra of $\mathscr A(\alpha)$ according to formula,
\begin{equation}\label{fact_2}
\gdeg(\mathscr A_{n,m}^j(\alpha), B( \mathscr E^j_{nm})) =
\begin{cases}
    (\deg_{\mathcal V_{m,j}})^{m_j} \quad & \text{ if } s_{nm} < \mu_j(\alpha); \\
    (G) \quad & \text{ otherwise,}
\end{cases}    
\end{equation}
where $\deg_{\mathcal V_{m,j}} \in A(G)$ is the basic degree (cf. Appendix \ref{subsec:G-degree}) associated with the irreducible $G$-representation $\mathcal V_{m,j}$ and $(G) \in A(G)$ is the unit element of the Burnside 
Ring. In addition, since each basic degree is involutive in the Burnside ring (cf. Appendix \ref{subsec:G-degree}), one has
\begin{equation}\label{fact_3}
  (\deg_{\mathcal V_{m,j}})^{m_j} =
\begin{cases}
    \deg_{\mathcal V_{m,j}} \quad & \text{ if } 2 \nmid m_j; \\
    (G) \quad & \text{ otherwise.}
\end{cases}   
\end{equation}
Putting together \eqref{fact_2} and \eqref{fact_3}, we introduce some notations to keep track of the indices 
\begin{align*}
  \Sigma := \{(n,m,j) \; : \; n \in \mathbb{N}, \; m \in \mathbb{N} \cup \{0\}, \; j \in \{1,2,\ldots,r \} \}, 
\end{align*}
which contribute non-trivially to the Burnside Ring product \eqref{fact_1}. To begin, the \textit{negative} spectrum of $\mathscr A(\alpha): \mathscr H \rightarrow \mathscr H$ is accounted for with the index set
\begin{align}\label{index_1}
   \Sigma_{-}(\alpha) := \left\{ (n,m,j) \in \Sigma \; : \;  1 - \frac{\mu_j(\alpha)}{s_{nm}} < 0 \right\}.
\end{align}
As is well known (see, for example, \cite{Watson}, p. 486), one has
\[
s_{1m} > m(m+2), \; m \geq 0,
\]
from which it follows that the set \eqref{index_1} is finite. 
\vs
Combining \eqref{index_1} with formulas \eqref{fact_1} and \eqref{fact_2}, one obtains
\begin{align*}
    \gdeg(\mathscr A(\alpha), B(\mathscr H)) \; = \prod\limits_{(n,m,j) \in \Sigma_{-}(\alpha)} (\deg_{\mathcal V_{m,j}})^{m_j}.
\end{align*}
 Computation of \eqref{fact_1} can be further reduced by accounting for the even $\mathcal V_j$-isotypic multiplicities $m_j$, whose corresponding basic degrees contribute trivially to the Burnside Product \eqref{fact_1}. We put,
\begin{align}\label{index_2}
\Sigma(\alpha) := \left\{ (n,m,j) \in \Sigma_{-}(\alpha) \; : \; 2 \nmid m_j \right\},
\end{align}
which, together with \eqref{fact_3}, yields
\begin{align} \label{fact_4}
\gdeg(\mathscr A(\alpha), B(\mathscr H)) \; = \prod\limits_{(n,m,j) \in \Sigma(\alpha)} \deg_{\mathcal V_{m,j}}.
\end{align}
\subsection{Computation of the Local Bifurcation Invariant} \label{sec:comp_loc_bif}
Under the assumptions \ref{c1}--\ref{c5}, conditions \ref{b1} and \ref{b2} are satisfied for the operator equation \eqref{eq:bif} (cf. Remarks \ref{rm0}, \ref{rm1}). Therefore, the existence of a branch of non-trivial solutions
to \eqref{eq:Lap} bifurcating from an isolated critical point $(\alpha_0,0) \in \Lambda$ is reduced, by Theorem \ref{th:Kras}, to computation of the local bifurcation invariant $\omega_G(\alpha_0)$. 
\vs
Adopting the notations from Section \ref{sec:bifurcation_abstract}, choose $\alpha^\pm_0 \in (\alpha_0 - \epsilon, \alpha_0 + \epsilon)$ with $\alpha^-_0 \leq \alpha_0 \leq \alpha^+_0$, where $\epsilon > 0$ is chosen such that, for all $0< \vert \alpha - \alpha_0 \vert < \epsilon$, the solution $(\alpha,0) \in M$ is a regular point and put 
$\mathscr A(\alpha^\pm_0) := D_u \mathscr F(\alpha^\pm_0,0)$. Then, the local bifurcation invariant $\omega_G(\alpha_0)$ at the isolated critical point $(\alpha_0,0) \in \br \times \mathscr H$ is given by
\begin{equation}\label{loc_bif_invariant2}
  \omega_G(\alpha_0) = \gdeg(\mathscr A(\alpha^-_0), B(\mathscr H)) - \gdeg(\mathscr A(\alpha^+_0), B(\mathscr H)),  
\end{equation}
where $B(\mathscr  H):= \{ u \in \mathscr H \; : \; \| u \|_{\mathscr H} < 1 \}$ is the open unit ball in $\mathscr H$. Notice that, since $(\alpha^\pm_0,0) \in \br \times \mathscr H$ are regular points of \eqref{eq:bif}, computation of the local bifurcation invariant $\omega_G(\alpha_0)$ amounts to computation of the  Leray-Schuader $G$-equivariant degree of the $G$-equivariant linear isomorphism
\begin{align*}
      \mathscr A(\alpha^\pm_0) := D_u \mathscr F(\alpha^\pm_0,0) = \operatorname{Id} - \mathscr L^{-1}(A(\alpha^\pm_0)): \mathscr H \rightarrow \mathscr H.  
\end{align*}
\vs
\begin{lemma} \label{l4.1}
    Under the assumptions \ref{c0}-\ref{c5} and using the notation \eqref{index_2}, the local bifurcation invariant at an isolated critical point $(\alpha_0,0) \in \Lambda$ with deleted regular neighborhood $\alpha^-_0 \leq \alpha_0 \leq \alpha^+_0$ on which $\mathscr A(\alpha): \mathscr H \rightarrow \mathscr H$ is an isomorphism is given by 
    \begin{equation}\label{eq:loc-bif-Lap}
     \omega_G(\alpha_0) = \prod\limits_{(n,m,j) \in \Sigma(\alpha^-_0)} \deg_{\mathcal V_{m,j}} - \prod\limits_{(n,m,j) \in \Sigma(\alpha^+_0)} \deg_{\mathcal V_{m,j}}.
    \end{equation}
\end{lemma}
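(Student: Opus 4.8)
The plan is to read off the result by combining the definition \eqref{loc_bif_invariant2} of the local bifurcation invariant with the reduced degree formula \eqref{fact_4}, evaluated at the two endpoints $\alpha^-_0$ and $\alpha^+_0$ of the deleted regular neighborhood. I would recall first that, by the construction carried out in Section \ref{sec:local-bif-inv} and reproduced in \eqref{loc_bif_invariant2}, one has $\omega_G(\alpha_0) = \gdeg(\mathscr A(\alpha^-_0), B(\mathscr H)) - \gdeg(\mathscr A(\alpha^+_0), B(\mathscr H))$, and that this Burnside ring element does not depend on the particular choice of $\alpha^\pm_0$ in the deleted neighborhood nor on the radius $\delta$ (cf. the discussion following \eqref{def:local_bifurcation_invariant}). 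Hence it suffices to compute each of the two $G$-degrees separately.

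Next I would observe that, because $\alpha^\pm_0$ lie in the deleted neighborhood on which $\mathscr A(\alpha):\mathscr H\to\mathscr H$ is an isomorphism, the pairs $(\mathscr A(\alpha^\pm_0), B(\mathscr H))$ are admissible and the product property \eqref{fact_1} applies. Invoking Lemma \ref{lem:spectral_decomposition} — where assumption \ref{c0} forces $\sigma(A_j(\alpha))=\{\mu_j(\alpha)\}$, so that the eigenvalues of $\mathscr A(\alpha^\pm_0)$ are exactly the numbers $1-\mu_j(\alpha^\pm_0)/s_{nm}$ — and then feeding this into the explicit formulas \eqref{fact_2} and \eqref{fact_3} for the basic-degree factors, the infinite product \eqref{fact_1} collapses precisely to \eqref{fact_4}, namely $\gdeg(\mathscr A(\alpha^\pm_0), B(\mathscr H)) = \prod_{(n,m,j)\in\Sigma(\alpha^\pm_0)}\deg_{\mathcal V_{m,j}}$. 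The product on the right is finite: the negative-spectrum index set $\Sigma_-(\alpha^\pm_0)$, which contains $\Sigma(\alpha^\pm_0)$, is finite by the Bessel-zero estimate $s_{1m}>m(m+2)$ recalled after \eqref{index_1}, so all but finitely many factors in \eqref{fact_1} are the Burnside unit $(G)$ and the expression is well-defined.

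Substituting these two evaluations into \eqref{loc_bif_invariant2} yields \eqref{eq:loc-bif-Lap}, completing the argument. There is no serious obstacle here — the content is essentially bookkeeping — but the one point that genuinely needs to be checked is that \eqref{fact_4} is legitimately applicable at $\alpha^\pm_0$, i.e. that both are regular points of the operator equation \eqref{eq:bif}. This is guaranteed by the standing hypothesis that $(\alpha_0,0)$ is an \emph{isolated} critical point with the stated deleted regular neighborhood, so that $\mathscr A(\alpha^\pm_0)$ are indeed isomorphisms; combined with the $\alpha^\pm_0$-independence of $\omega_G(\alpha_0)$, this makes the right-hand side of \eqref{eq:loc-bif-Lap} a well-defined invariant of the critical point $(\alpha_0,0)$ alone.
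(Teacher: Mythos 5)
Your argument is correct and follows exactly the route the paper takes: the lemma is an immediate consequence of the definition \eqref{loc_bif_invariant2} together with the degree formula \eqref{fact_4} derived just above it, and the paper accordingly states Lemma \ref{l4.1} without a separate proof. Your additional checks — that $\alpha^\pm_0$ are regular so \eqref{fact_4} applies, and that the products are finite via $s_{1m}>m(m+2)$ — are exactly the justifications the paper has already put in place in the surrounding discussion.
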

In order to formulate the main local equivariant bifurcation result, we must first consider some additional properties of the basic degree. For a more thorough exposition of these topics we refer the reader to \cite{book-new}, \cite{AED}. 
\vs
Take $s \in \mathbb{N}$ and define the \textit{$s$-folding map} as the Lie group homomorphism,
\begin{align*} 
    \psi_s(e^{i \theta},\gamma, \pm 1) = (e^{s \theta}, \gamma, \pm 1), \quad \psi_s(\kappa e^{i \theta},\gamma, \pm 1) = (\kappa e^{is \theta}, \gamma, \pm 1).
\end{align*}
Each $\psi_s: O(2) \times \Gamma \times \mathbb{Z}_2 \rightarrow  O(2) \times \Gamma \times \mathbb{Z}_2$ induces a corresponding Burnside ring homomorphism $\Psi_s: A(G) \rightarrow A(G)$ defined the generators $(H) \in \Phi_0(G)$ by,
\begin{align} 
\label{sf2}
    \Psi_s(H) := ({}^{s}H), \quad {}^{s}H := \psi^{-1}_s(H).
\end{align}
Notice that, for $j \in \{1,\ldots,r\}$ and $m \geq 0$, there is the following relation between basic degrees
\begin{align}
\label{sf3}
    \Psi_s(\deg_{\mathcal V_{m,j}}) = \deg_{\mathcal V_{sm,j}}.
\end{align}
\begin{remark} \label{rm2} \rm
An orbit type which is maximal in $\Phi_0(G;\mathscr H \setminus \{0\})$ is also maximal in $\Phi_0(G;\mathscr H_{0} \setminus \{0\})$. Therefore, any $u \in \mathscr H \setminus \{0\}$ with 
an isotropy $G_u \leq G$ such that $(G_u)$ is maximal in $\Phi_0(G;\mathscr H \setminus \{0\})$ must be \textit{radially symmetric}. In order to detect branches of solutions to \eqref{eq:Lap1} corresponding to maps which are both non-trivial {\it and} non-radial, we must restrict our focus to orbit types which are maximal in $\Phi_0(G;\mathscr H_{m} \setminus \{0\})$ for some positive $m \in \mathbb{N}$. Indeed, to demonstrate the existence of a branch of non-radial solutions bifurcating from some isolated critical point $(\alpha_0,0) \in \Lambda$, it is sufficient to show that for some $m > 0$ there is an orbit type $(H)\in \Phi_0(G;\mathscr H_m\setminus\{0\})$ with $\operatorname{coeff}^H( \omega_G(\alpha_0))\not=0$. In such a case, one can additionally conclude that for some $s\ge 1$ there exists a branch non-radial solutions to \eqref{eq:Lap1} with symmetries at least $(^sH)$ bifurcating from $(\alpha_0,0)$.    
\end{remark}
With motivation from Remark \ref{rm2}, we denote by $\mathfrak M_m$, $m > 0$ the set of all maximal orbit types in $\Phi_0(G;\mathscr H_{m} \setminus \{0\})$ and by 
$\mathfrak M_{m,j}$ the set of orbit types $\Phi_0(G; \mathscr H_{m,j} \setminus \{0\}) \cap \mathfrak M_m$. Since each $(H) \in \mathfrak M_m$ is also an orbit type in $\Phi_0(G; \mathscr H_{m,j} \setminus \{0\})$ for at least one $j \in \{1,2,\ldots, r\}$, one has
\begin{align*}
    \mathfrak M_m = \bigcup_{j=1}^r \mathfrak M_{m,j}.
\end{align*}
Notice that $\Psi_s(\mathfrak M_{m,j}) = \mathfrak M_{sm,j}$ for any $s \in \mathbb{N}$ and $j \in \{1,2,\ldots,r\}$ (in particular, one has $\Psi_s(\mathfrak M_{1,j}) = \mathfrak M_{s,j}$). Hence, any orbit type $(H_0) \in \mathfrak M_{m,j}$ can be recovered from an orbit type in $\mathfrak M_{1,j}$ by the relation $(H):= \Psi_s^{-1}(H_0)$.
\begin{remark} \label{rm3} \rm
Take $m > 0 $ and $j \in \{1,2,\ldots,r\}$. For any basic degree $\deg_{\mathcal V_{m,j}} \in A(G)$ and orbit type $(H) \in \mathfrak M_{1,j}$, the recurrence formula for the Leray-Schauder $G$-equivariant degree (cf. Appendix \ref{subsec:G-degree}) implies
\begin{align*}
    \deg_{\mathcal V_{m,j}} = (G) - y_{j}({}^{m}H) + a_{j}, 
\end{align*}
where $a_{j} \in A(G)$ is such that $\operatorname{coeff}^{{}^{s}H}(a_{j}) = 0$ for all $s \in \bn$ and where the coefficient $y_{j} \in \mathbb{Z}$ is determined by the rule
 \begin{align*} 
    y_{j} =
\begin{cases}
    0 \quad & \text{if } \dim{\mathcal V_{1,j}^{H}} \text{ is even};\\
    1 \quad & \text{if } \dim{\mathcal V_{1,j}^{H}} \text{ is odd and } \vert W(H) \vert = 2; \\
    2 \quad & \text{if } \dim{\mathcal V_{1,j}^{H}} \text{ is odd and } \vert W(H) \vert = 1.
\end{cases}
\end{align*}
equivalently
\begin{align*} 
    y_{j} = \frac{x_0}{2}(1 - (-1)^{\dim{\mathcal V_{1,j}^{H}}}),
\end{align*}
where
 \begin{align*} 
    x_0 =
\begin{cases}
    1 \quad & \text{if } \vert W(H) \vert = 2; \\
    2 \quad & \text{if } \vert W(H) \vert = 1.
\end{cases}
\end{align*}
It follows that non-triviality of the coefficient associated with an orbit type $({}^{m}H) \in \mathfrak M_{m,j}$ in the basic degree $\deg_{\mathcal V_{m,j}}$ is characterized by the parity of $\dim{\mathcal V_{1,j}^{H}}$ in the following way
\begin{align*}
    \operatorname{coeff}^{{}^{m}H}(\deg_{\mathcal V_{m,j}}) \neq 0 \iff 2 \nmid \dim{\mathcal V_{1,j}^{H}}.
\end{align*}
The following result concerns the fate of orbit types belonging to $\mathfrak M_m$ in the Burnside Ring product of basic degrees such as \eqref{fact_4}. In particular, we find that for $m > 0$ and $i,l \in \{1,2,\ldots,r\}$, the coefficient of $(H) \in \mathfrak M_{m,i} \cap \mathfrak M_{m,l}$ is \textit{$2$-nilpotent} with respect to the Burnside Ring product $\deg_{\mathcal V_{m,i}} \cdot \deg_{\mathcal V_{m,l}} \in A(G)$ in the case that both $\dim\mathcal V_{m,i}^{H}$ and $\dim\mathcal V_{m,l}^{H}$ are odd.
\begin{lemma} \label{l4.4}
Take $m > 0 $ and $i,l \in \{1,2,\ldots,r\}$. For 
$(H) \in \mathfrak M_{1,i} \cap \mathfrak M_{1,l}$, one has 
\begin{align*}
    \operatorname{coeff}^{{}^{m}H}(\deg_{\mathcal V_{m,i}} \cdot \deg_{\mathcal V_{m,l}} ) =
        \begin{cases}
                0 \quad & \text{if } \dim\mathcal V_{1,i}^{H} \text{ and } \dim\mathcal V_{1,l}^{H} \text{ are of the same parity;} \\
                - x_0 \quad & \text{else.}
            \end{cases}
    \end{align*}
    equivalently
    \begin{align*}
    \operatorname{coeff}^{{}^{m}H}(\deg_{\mathcal V_{m,i}} \cdot \deg_{\mathcal V_{m,l}} ) = \frac{-x_0}{2} (1 - (-1)^{\dim\mathcal V_{1,i}^{H} + \dim\mathcal V_{1,l}^{H}}).
    \end{align*}
\end{lemma}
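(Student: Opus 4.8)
The plan is to extract the coefficient directly from the expansion of the basic degrees furnished by Remark~\ref{rm3}. Fix $(H) \in \mathfrak M_{1,i} \cap \mathfrak M_{1,l}$ and write, for $j \in \{i,l\}$,
\[
\deg_{\mathcal V_{m,j}} = (G) - y_j\,({}^{m}H) + a_j, \qquad y_j = \tfrac{x_0}{2}\bigl(1 - (-1)^{\dim \mathcal V_{1,j}^{H}}\bigr),
\]
where, by Remark~\ref{rm3}, $a_j \in A(G)$ has vanishing coefficient on every $({}^{s}H)$, $s \in \bn$, and (since the $(G)$-coefficient of a basic degree equals $1$) also on $(G)$. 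By \eqref{sf3} together with $\Psi_m(\mathfrak M_{1,j}) = \mathfrak M_{m,j}$, the orbit type $({}^{m}H)$ is maximal in $\Phi_0(G;\mathscr H_m \setminus \{0\})$, and $\deg_{\mathcal V_{m,j}}$ is supported on orbit types lying in $\Phi_0(G;\mathscr H_m\setminus\{0\}) \cup \{(G)\}$; consequently $(G)$ and $({}^{m}H)$ are the only orbit types occurring in $\deg_{\mathcal V_{m,j}}$ that are $\geq ({}^{m}H)$ in $\Phi_0(G)$, and hence no orbit type occurring in $a_i$ or $a_l$ is.

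Next I would multiply $\deg_{\mathcal V_{m,i}} \cdot \deg_{\mathcal V_{m,l}}$ out in the Burnside ring $A(G)$ and compute $\operatorname{coeff}^{{}^{m}H}$ of the result. The structural input is that a product $(K_1)\cdot(K_2)$ of orbit-type generators is a $\bz$-combination of orbit types subconjugate to \emph{both} $(K_1)$ and $(K_2)$; combined with the maximality of $({}^{m}H)$ and the previous paragraph, this kills every term in which at least one factor is drawn from $a_i$ or $a_l$, as well as the term $(G)\cdot(G)$. What remains are $(G)\cdot(-y_l({}^{m}H))$, $(-y_i({}^{m}H))\cdot(G)$, and $(-y_i({}^{m}H))\cdot(-y_l({}^{m}H))$. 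Using that $(G)$ is the unit of $A(G)$ and the multiplicativity formula $({}^{m}H)\cdot({}^{m}H) = \vert W({}^{m}H)\vert\,({}^{m}H) + (\text{orbit types strictly below }({}^{m}H))$ (cf. Appendix~\ref{subsec:G-degree}), together with $\vert W({}^{m}H)\vert = \vert W(H)\vert$ — which holds because the folding homomorphism $\psi_m$ is surjective and therefore induces an isomorphism of Weyl groups — one obtains
\[
\operatorname{coeff}^{{}^{m}H}\bigl(\deg_{\mathcal V_{m,i}} \cdot \deg_{\mathcal V_{m,l}}\bigr) = -\,y_i - y_l + \vert W(H)\vert\, y_i\, y_l .
\]

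Finally, I would simplify using the identity $\vert W(H)\vert\, x_0 = 2$ (valid because $x_0 = 1$ exactly when $\vert W(H)\vert = 2$ and $x_0 = 2$ exactly when $\vert W(H)\vert = 1$). Substituting $y_j = \tfrac{x_0}{2}(1 - (-1)^{\dim \mathcal V_{1,j}^{H}})$ and collecting terms collapses the right-hand side to $\tfrac{-x_0}{2}\bigl(1 - (-1)^{\dim \mathcal V_{1,i}^{H} + \dim \mathcal V_{1,l}^{H}}\bigr)$, which equals $0$ when $\dim \mathcal V_{1,i}^{H}$ and $\dim \mathcal V_{1,l}^{H}$ have the same parity and $-x_0$ otherwise; one may equivalently just check the three cases $y_i = y_l = 0$, $y_i = y_l = x_0$, and exactly one of $y_i, y_l$ equal to $0$.

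I expect the main obstacle to be the bookkeeping of the second paragraph: making rigorous that \emph{only} the $(G)$- and $({}^{m}H)$-terms of the two basic degrees contribute to the coefficient of $({}^{m}H)$ in the product. This rests on the localization of the support of a basic degree inside $\Phi_0(G;\mathscr H_m\setminus\{0\}) \cup \{(G)\}$, the sub-conjugacy property of Burnside products, and the maximality of $({}^{m}H)$; each ingredient is standard (cf. \cite{book-new}, \cite{AED}) but must be assembled carefully. An alternative that trims some of this is to reduce to the case $m = 1$ first: since $\Psi_m : A(G) \to A(G)$ is a ring homomorphism with $\Psi_m(\deg_{\mathcal V_{1,j}}) = \deg_{\mathcal V_{m,j}}$ (by \eqref{sf3}) and $\Psi_m$ is injective on orbit types, one has $\operatorname{coeff}^{{}^{m}H}(\deg_{\mathcal V_{m,i}}\cdot\deg_{\mathcal V_{m,l}}) = \operatorname{coeff}^{H}(\deg_{\mathcal V_{1,i}}\cdot\deg_{\mathcal V_{1,l}})$, and the same computation is then carried out at level $1$.
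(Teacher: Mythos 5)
Your proof is correct and follows essentially the same route as the paper's: expand each basic degree via the decomposition of Remark~\ref{rm3}, multiply out in the Burnside ring, isolate the coefficient of $({}^{m}H)$, and reduce to a short case analysis on the parities of $y_i,y_l$. You supply extra (and welcome) rigor justifying why the remainder terms $a_i,a_l$ cannot contribute to $\operatorname{coeff}^{{}^{m}H}$ of the product and why $\vert W({}^{m}H)\vert = \vert W(H)\vert$, and your sign bookkeeping is in fact cleaner than the paper's, which writes the product as $(G) - y_0({}^{m}H) + a$ but then misstates $y_0$, rather than $-y_0$, as $\operatorname{coeff}^{{}^{m}H}$.
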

\begin{proof}
Consider the Burnside Ring product of the relevant basic degrees
\begin{align*}
    \deg_{\mathcal V_{m,i}} \cdot \deg_{\mathcal V_{m,l}} & = \left( (G) - y_{i}({}^{m}H) + a_{i} \right) \cdot \left( (G) - y_{l}({}^{m}H) + a_{l} \right)\\
    &= (G) - (y_{i} + y_{l} - y_{i}y_{l}\vert W(H)\vert) ({}^{m}H) + a, 
\end{align*}
where $a \in A(G)$ is such that $\operatorname{coeff}^{{}^{s}H}(a) = 0$ for all $s \in \{1,2,\ldots\}$, and put $y_0:= y_{i} + y_{l} - y_{i}y_{l}\vert W(H)\vert$, i.e.
\[
y_0 = \operatorname{coeff}^{{}^{m}H}(\deg_{\mathcal V_{m,i}} \cdot \deg_{\mathcal V_{m,l}} ).
\]
Now, if $\dim\mathcal V_{1,i}^{H}$ and $\dim\mathcal V_{1,l}^{H}$ are both even, then one has $y_{i} = y_{l} = 0$ and the result follows. On the other hand, if $\dim\mathcal V_{1,i}^{H}$ and $\dim\mathcal V_{1,l}^{H}$ are both odd, then one has $y_i = y_l = x_0$ such that
\begin{align*}
    y_0 = x_0(2-x_0 \vert W(H) \vert),
\end{align*}
where in either of the cases, $x_0 = 2$ and $\vert W(H) \vert = 1$ or $x_0 = 1$ and $\vert W(H) \vert = 2$, one has $2 - x_0 \vert W(H) \vert = 0$. If instead one supposes that $\dim\mathcal V_{1,i}^{H}$ and $\dim\mathcal V_{1,l}^{H}$ are of different parities, then one has the two corresponding cases, $y_{i} = x_0$ and $y_{l} = 0$ \textit{or} $y_{i} = 0$ and $y_{l} = x_0$, both of which imply $y_0 = x_0$.
\end{proof}
\vs
Naturally, the Lemma \eqref{l4.4} can be generalized to an orbit type $(H) \in \bigcap_{k=1}^N \mathfrak M_{1,j_k}$ and the Burnside Ring product of the basic degrees $\{\deg_{\mathcal V_{m,j_k}} \}_{k=1}^N$ with $\dim\mathcal V_{1,j_k}^{H}$ odd for an even number of $j_k \in \{1,2,\ldots,r\}$.
\begin{corollary} \label{l4.5}
Take $m > 0, \; N \in \mathbb{N}$ and $j_1,\ldots,j_N \in \{1,2,\ldots,r\}$. For $(H) \in \bigcap_{k=1}^N \mathfrak M_{1,j_k}$, one has
\begin{align*} 
    \operatorname{coeff}^{{}^{m}H} \left( \prod\limits_{k=1}^N \deg_{\mathcal V_{m,j_k}} \right) =
        \begin{cases}
                0 \quad & \text{if } \vert \{ j_k : 2 \nmid \dim\mathcal V_{1,j_k}^{H} \} \vert \text{ is even};\\
                - x_0 \quad & \text{otherwise}
            \end{cases}
    \end{align*}
    equivalently
    \begin{align*} 
    \operatorname{coeff}^{{}^{m}H} \left( \prod\limits_{k=1}^N \deg_{\mathcal V_{m,j_k}} \right) = \frac{-x_0}{2} \left(1 - (-1)^{\sum\limits_{k=1}^N \dim\mathcal V_{1,j_k}^{H}} \right).
    \end{align*}
\end{corollary}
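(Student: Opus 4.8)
The plan is to prove the identity by induction on $N$, the inductive step being a one-factor-at-a-time version of the computation in the proof of Lemma \ref{l4.4}, with base case $N=1$ furnished by the single-factor expansion of Remark \ref{rm3} (equivalently one may start at $N=2$, which is Lemma \ref{l4.4} itself). Using the involutivity relation \eqref{fact_3}, $(\deg_{\mathcal V_{m,j}})^{2}=(G)$, we first group equal indices and discard those of even multiplicity, reducing to the case of pairwise distinct $j_1,\dots,j_N$, for which the two displayed formulas coincide; so it suffices to establish the second one. Fix $(H)\in\bigcap_{k=1}^{N}\mathfrak M_{1,j_k}$ and set $\tau_N:=\sum_{k=1}^{N}\dim\mathcal V_{1,j_k}^{H}$, $P_N:=\prod_{k=1}^{N}\deg_{\mathcal V_{m,j_k}}$.

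The first real step is to record the normal form of the partial products. By Remark \ref{rm3} (applicable since $(H)\in\mathfrak M_{1,j_k}$), each factor satisfies
\begin{align*}
\deg_{\mathcal V_{m,j_k}} = (G) - y_{j_k}\,({}^{m}H) + a_{j_k},
\end{align*}
where $y_{j_k}\in\{0,x_0\}$, equal to $x_0$ precisely when $\dim\mathcal V_{1,j_k}^{H}$ is odd, and $a_{j_k}$ is a $\bz$-combination of orbit types $(L)$ with $(L)\neq(G)$ and $({}^{m}H)\not\le(L)$ --- the latter because $\deg_{\mathcal V_{m,j_k}}$ is supported on $\Phi_0(G;\mathcal V_{m,j_k}\setminus\{0\})\cup\{(G)\}$, in which $({}^{m}H)\in\mathfrak M_{m,j_k}$ is maximal. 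A standard support argument for products in $A(G)$ then shows that $P_N$ has the same shape,
\begin{align*}
P_N = (G) + c_N\,({}^{m}H) + R_N, \qquad \operatorname{coeff}^{G}(P_N)=1,
\end{align*}
with $R_N$ supported on orbit types $(L)\neq(G)$, $({}^{m}H)\not\le(L)$; in particular $c_N=\operatorname{coeff}^{{}^{m}H}(P_N)$ is exactly the quantity to compute.

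Next I would derive the recurrence. Multiplying $P_N$ by $\deg_{\mathcal V_{m,j_{N+1}}}$ and extracting the coefficient of $({}^{m}H)$, maximality of $({}^{m}H)$ makes any product of two orbit types neither of which lies above $({}^{m}H)$ --- and $(G)$ times an orbit type $\not\ge({}^{m}H)$ --- contribute $0$; hence only the $(G)$- and $({}^{m}H)$-components of the two factors matter, and from $(G)\cdot({}^{m}H)=({}^{m}H)$ and $({}^{m}H)\cdot({}^{m}H)=|W(H)|\,({}^{m}H)+(\text{strictly smaller orbit types})$ one obtains
\begin{align*}
c_{N+1} = c_N - y_{j_{N+1}}\bigl(1+|W(H)|\,c_N\bigr).
\end{align*}
A brief parity check now finishes the induction: if $\dim\mathcal V_{1,j_{N+1}}^{H}$ is even then $y_{j_{N+1}}=0$, so $c_{N+1}=c_N$ and $\tau$ keeps its parity; if it is odd then $y_{j_{N+1}}=x_0$, and the case $c_N=0$ gives $c_{N+1}=-x_0$ while the case $c_N=-x_0$ gives $c_{N+1}=0$, the latter collapse forced by the identity $x_0|W(H)|=2$, valid in both admissible configurations $(x_0,|W(H)|)\in\{(1,2),(2,1)\}$. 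In every case this is the transition $c_N=\frac{-x_0}{2}\bigl(1-(-1)^{\tau_N}\bigr)\mapsto c_{N+1}=\frac{-x_0}{2}\bigl(1-(-1)^{\tau_{N+1}}\bigr)$, so the induction closes.

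The step I expect to be the main obstacle is the support bookkeeping underlying the claim that only the $(G)$- and $({}^{m}H)$-parts of $P_N$ influence $\operatorname{coeff}^{{}^{m}H}(P_{N+1})$ --- that is, that $({}^{m}H)$ remains a maximal orbit type among those occurring in every partial product; this relies on the maximality $({}^{m}H)\in\mathfrak M_{m,j_k}$ and on the folding relation \eqref{sf2}--\eqref{sf3} relating basic degrees on $\mathcal W_m\otimes\mathcal V_j^{-}$ to those on $\mathcal W_1\otimes\mathcal V_j^{-}$. Once this is in place, the argument collapses to the scalar recurrence above. (An alternative that bypasses the induction: apply the mark homomorphism $\chi_{{}^{m}H}\colon A(G)\to\bz$; from $\chi_{{}^{m}H}(\deg_{\mathcal V_{m,j}})=(-1)^{\dim\mathcal V_{1,j}^{H}}$ together with the identity $\operatorname{coeff}^{{}^{m}H}(x)=\bigl(\chi_{{}^{m}H}(x)-\chi_{G}(x)\bigr)/|W(H)|$, valid for the maximal orbit type $({}^{m}H)$, the claimed coefficient is read off directly from $\chi_{{}^{m}H}\bigl(\prod_{k=1}^{N}\deg_{\mathcal V_{m,j_k}}\bigr)=(-1)^{\tau_N}$.)
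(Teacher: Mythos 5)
The paper states Corollary~\ref{l4.5} only as a ``natural generalization'' of Lemma~\ref{l4.4} and gives no explicit proof, so there is nothing to compare against; your inductive argument fleshes out exactly the intended extension. The recurrence $c_{N+1}=c_N - y_{j_{N+1}}\bigl(1+|W(H)|c_N\bigr)$ is precisely the $N$-fold iterate of the two-factor computation in the proof of Lemma~\ref{l4.4}, and the key bookkeeping step --- that no orbit type strictly above $({}^{m}H)$ can appear in any partial product $P_N$ --- is handled correctly: the Burnside product $(K_1)\cdot(K_2)$ is supported on orbit types $(L)$ with $(L)\leq(K_i)$ for both $i$, so any $(L)\geq({}^{m}H)$ in the support of $P_N$ forces each chosen $(K_i)$ to lie in $\{(G),({}^{m}H)\}$ by maximality of $({}^{m}H)$ in $\Phi_0(G;\mathcal V_{m,j_i}\setminus\{0\})$, whence $(L)\in\{(G),({}^{m}H)\}$. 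Your closing remark via the mark homomorphism $\chi_{{}^{m}H}$ is a genuinely cleaner alternative: it converts the nonlinear Burnside-ring recurrence into the multiplicative scalar identity $\chi_{{}^{m}H}\bigl(\prod_k\deg_{\mathcal V_{m,j_k}}\bigr)=(-1)^{\tau_N}$ and reads off the coefficient directly from $\operatorname{coeff}^{{}^{m}H}(x)=\bigl(\chi_{{}^{m}H}(x)-\chi_G(x)\bigr)/|W(H)|$ together with $x_0|W(H)|=2$, bypassing the induction and the support bookkeeping entirely (the latter identity still needs the same maximality observation to be valid for $P_N$, but the argument is otherwise a one-liner). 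Both routes are correct; the mark-homomorphism route is the one I would recommend writing up if a proof were to be added to the paper.

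One small presentational caution: the first displayed formula in the statement reads $\vert\{ j_k : 2 \nmid \dim\mathcal V_{1,j_k}^{H} \}\vert$, which as a set of values would undercount when indices repeat and would then disagree with the multiplicity-sensitive second formula. You implicitly resolve this by reducing to pairwise distinct $j_k$ via involutivity, which is the right reading; it may be worth stating explicitly that the cardinality is to be taken with multiplicity (i.e., over the index $k$), since your recurrence and the mark-homomorphism calculation both naturally produce the multiplicity count $\tau_N=\sum_k\dim\mathcal V_{1,j_k}^{H}$.
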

\end{remark}
\vs
Remarks \ref{rm2} and \ref{rm3} permit us to 
further refine our index set \eqref{index_2} as follows. \\ 
Let $(\alpha_0,0) \in \Lambda$ be an isolated critical point with a deleted regular neighborhood $\alpha_0^- < \alpha_0 < \alpha_0^+$ on which $\mathscr A(\alpha): \mathscr H \rightarrow \mathscr H$ is an isomorphism. For a given $s \in \mathbb{N}$, $(H) \in \mathfrak M_{1}$ we put  
\begin{align} \label{index_3}
\Sigma^s(\alpha_0^\pm,H) := 
        \{ (n,m,j) \in \Sigma(\alpha_0^\pm) :
\operatorname{coeff}^{{}^{s}H}(\deg_{\mathcal V_{m,j}}) \neq 0 \}, 
\end{align}
\begin{align} \label{card_1}
    \mathfrak n^s(\alpha_0^\pm, H) := \vert \Sigma^s(\alpha_0^\pm,H) \vert,
\end{align}
and 
\begin{align} \label{card_2}
    \mathfrak m^s(\alpha_0^\pm,H) := \vert  \{ (n,m,j) \in \Sigma (\alpha_0^\pm) : ({}^{s}H) < ({}^{m}H) \text{ and } 2 \nmid \mathfrak n^m(\alpha_0^\pm, H) \} \vert.
\end{align}
\begin{remark} \rm \label{rm4}
Take $(H) \in \mathfrak M_{1}$, $(n,m,j) \in \Sigma$ and $s \in \bn$. If 
$\operatorname{coeff}^{{}^{s}H}(\deg_{\mathcal V_{m,j}}) \neq 0$, then $m = s$. On the other hand, if $({}^{s}H) \leq ({}^{m}H)$, then $s \mid m$. The converse of each statement is not, in general, true.
\end{remark}
In order to keep track of the numbers $s \in \bn$ for which the parities of $\mathfrak n^s(\alpha_0^\pm, H)$ disagree, we put
\begin{equation}\label{eq:loc-indicator}
\mathfrak i^s(\alpha_0,H):=
\begin{cases} 
-1\;&\text{ if } \mathfrak n^s(\alpha_0^-, H) \;\text{ is odd and }  \mathfrak n^s(\alpha_0^+, H) \; \;\text{ is even}  \\
1\;&\text{ if } \mathfrak n^s(\alpha_0^-, H) \;\text{ is even and }  \mathfrak n^s(\alpha_0^+, H) \; \;\text{ is odd},  \\
0 \; & \;\text{ otherwise}.
\end{cases}
\end{equation}
and also
\begin{align} \label{eq:loc-maxfolding}
    \mathfrak s(\alpha_0,H) := \max\{ s \in \bn: \mathfrak i^s(\alpha_0,H) \neq 0 \}.
\end{align}
\begin{remark} \rm \label{rm5}
Notice that $\mathfrak s(\alpha_0,H)$ is well defined and that the numbers $\mathfrak m^{\mathfrak s(\alpha_0,H)}(\alpha_0^\pm,H)$ are of the same parity for any isolated critical point $(\alpha_0,0) \in \Lambda$ and orbit type $(H) \in \mathfrak M_1$. 
\end{remark}
We are finally in a position to formulate our main local bifurcation result.
\begin{theorem}\label{th:bounded}
If $(\alpha_0,0) \in \Lambda$ is an isolated critical point with 
a deleted regular neighborhood $\alpha^-_0 \leq \alpha_0 \leq \alpha^+_0$ on which the linear operator $\mathscr A(\alpha): \mathscr H \rightarrow \mathscr H$ is an isomorphism and if there is an orbit type $(H) \in \mathfrak M_1$ such that $\mathfrak i^{\mathfrak s}(\alpha_0,H)\not=0$ and $\mathfrak i^{m}(\alpha_0,H)=0$ for all $m > \mathfrak s$, where $\mathfrak s:=\mathfrak s(\alpha_0,H) \in \bn$, then one has
\begin{align*}
   \operatorname{coeff}^{{}^{s}H}(\omega_G(\alpha_0)) =  
    \begin{cases}
        (-1)^{^{\mathfrak m^{\mathfrak s}(\alpha^-_0,H)}}\mathfrak i^{\mathfrak s}(\alpha_0,H)x_0 \quad& \text{ if } s = s_0;\\
        0 \quad& \text{ if } s > s_0.
    \end{cases} 
\end{align*}
\end{theorem}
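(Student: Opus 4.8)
The plan is to compute $\operatorname{coeff}^{({}^s H)}(\omega_G(\alpha_0))$ directly from Lemma \ref{l4.1}. Write $s_0:=\mathfrak s$ and, for brevity, $P^{\pm}:=\prod_{(n,m,j)\in\Sigma(\alpha_0^{\pm})}\deg_{\mathcal V_{m,j}}$, so that $\omega_G(\alpha_0)=P^{-}-P^{+}$ and hence $\operatorname{coeff}^{({}^s H)}(\omega_G(\alpha_0))=\operatorname{coeff}^{({}^s H)}(P^{-})-\operatorname{coeff}^{({}^s H)}(P^{+})$. Since the index sets $\Sigma(\alpha_0^{\pm})$ are finite, each $P^{\pm}$ is a finite Burnside ring product, and the whole argument takes place in $A(G)$.

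First I would regroup the factors of each $P^{\pm}$ by the value of the fold index $m$, writing $P^{\pm}=\prod_{m\ge 0}P_m^{\pm}$ with $P_m^{\pm}:=\prod_{(n,m,j)\in\Sigma(\alpha_0^{\pm})}\deg_{\mathcal V_{m,j}}$. Applying the recurrence decomposition of Remark \ref{rm3} to each factor and then Corollary \ref{l4.5} to the block $P_m^{\pm}$, one obtains $\operatorname{coeff}^{({}^m H)}(P_m^{\pm})=-x_0$ exactly when $\mathfrak n^m(\alpha_0^{\pm},H)$ is odd and $0$ otherwise; that is, at the folded orbit type $({}^m H)$ the level-$m$ block contributes precisely according to the parity of $\mathfrak n^m(\alpha_0^{\pm},H)$. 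Combining this with Remark \ref{rm4} (if $\operatorname{coeff}^{({}^s H)}(\deg_{\mathcal V_{m,j}})\ne 0$ then $m=s$, and if $({}^s H)\le ({}^m H)$ then $s\mid m$), one reduces the data of each $P^{\pm}$, as far as the $({}^s H)$-recurrence is concerned, to its coefficients on $(G)$ (which equal $1$, by irreducibility of the $\mathcal V_{m,j}$) and on the folded types $({}^t H)$ with $s\mid t$.

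Next I would run the recurrence formula for the Leray--Schauder $G$-equivariant degree downward through the chain of folded orbit types $({}^t H)$, from $(G)$ down to $({}^s H)$, separately for $P^{-}$ and $P^{+}$. Using $|W({}^t H)|=|W(H)|$, the identity $x_0|W(H)|=2$, and the involutivity of basic degrees (exactly as exploited in the proof of Lemma \ref{l4.4}), each fold level $({}^t H)$ with $({}^s H)<({}^t H)$ and $\mathfrak n^t(\alpha_0^{\pm},H)$ odd propagates a sign change down to level $s$; the total number of the corresponding triples in $\Sigma(\alpha_0^{\pm})$ is exactly $\mathfrak m^s(\alpha_0^{\pm},H)$. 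This gives $\operatorname{coeff}^{({}^s H)}(P^{\pm})=(-1)^{\mathfrak m^s(\alpha_0^{\pm},H)}x_0$ if $\mathfrak n^s(\alpha_0^{\pm},H)$ is odd and $0$ if it is even. Now the hypothesis $\mathfrak i^m(\alpha_0,H)=0$ for every $m>\mathfrak s$ forces $\mathfrak n^m(\alpha_0^{-},H)\equiv\mathfrak n^m(\alpha_0^{+},H)\pmod 2$ for all such $m$, so by the same downward induction $\operatorname{coeff}^{({}^m H)}(P^{-})=\operatorname{coeff}^{({}^m H)}(P^{+})$ for every $m>\mathfrak s$; these higher-level coefficients cancel in $P^{-}-P^{+}$, whence $\operatorname{coeff}^{({}^s H)}(\omega_G(\alpha_0))=0$ for $s>\mathfrak s$. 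For $s=\mathfrak s$ only the level-$\mathfrak s$ blocks survive, and (using Remark \ref{rm5}, which guarantees $\mathfrak m^{\mathfrak s}(\alpha_0^{-},H)\equiv\mathfrak m^{\mathfrak s}(\alpha_0^{+},H)$, so the sign is unambiguous) subtracting gives $\operatorname{coeff}^{({}^{\mathfrak s} H)}(\omega_G(\alpha_0))=(-1)^{\mathfrak m^{\mathfrak s}(\alpha_0^{-},H)}\,\mathfrak i^{\mathfrak s}(\alpha_0,H)\,x_0$.

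The step I expect to be the main obstacle is the bookkeeping in the downward recurrence, specifically justifying that in the range $({}^s H)<(L)<(G)$ the only orbit types $(L)$ carrying a nonzero coefficient in the products $P^{\pm}$ --- and hence the only ones feeding back into the recurrence at level $s$ --- are the folded types $({}^t H)$. This requires combining the support of each basic degree $\deg_{\mathcal V_{m,j}}$ on isotropy types of $\mathcal W_m\otimes\mathcal V_j^-$ with the maximality of $(H)$ in $\Phi_0(G;\mathscr H_1\setminus\{0\})$ and the behaviour of isotropy groups under the folding homomorphisms $\psi_s$. Once this ``one-parameter collapse'' of the recurrence is in hand, the propagation of parities and signs is routine and is already modeled by Lemma \ref{l4.4} and Corollary \ref{l4.5}.
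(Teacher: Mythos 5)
Your strategy is genuinely different from the paper's. You compute $\operatorname{coeff}^{({}^s H)}(P^{\pm})$ separately for each of $\alpha_0^{\pm}$ and then subtract, whereas the paper first rewrites
\[
\omega_G(\alpha_0)=\prod_{s>\mathfrak s}\rho_G^s(\alpha_0^-,H)\cdot\bigl(\rho_G^{\mathfrak s}(\alpha_0^-,H)-\rho_G^{\mathfrak s}(\alpha_0^+,H)+\bm\beta\bigr),
\]
so that only one $\pm x_0$-coefficient at $({}^{\mathfrak s}H)$ has to be threaded through the remaining product, and that threading is encapsulated in a dedicated technical lemma (Lemma \ref{lem:bounded}). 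Both arguments ultimately lean on the same facts (Remark \ref{rm3}, Lemma \ref{l4.4}, Corollary \ref{l4.5}, and a downward collapse of the recurrence to the chain of folded types), but the paper's reorganization means it never needs a formula for each $\operatorname{coeff}^{({}^s H)}(P^{\pm})$ individually.

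However, your intermediate formula has a sign error. By Corollary \ref{l4.5} the level-$m$ block carries coefficient $-x_0$ (not $+x_0$) at $({}^m H)$ when $\mathfrak n^m(\alpha_0^{\pm},H)$ is odd, so the base value in your downward recurrence at level $\mathfrak s$ is $-x_0$; after $K$ sign flips it becomes $(-1)^K(-x_0)$. Hence the correct statement, when $\mathfrak n^{\mathfrak s}(\alpha_0^{\pm},H)$ is odd, is
\[
\operatorname{coeff}^{({}^{\mathfrak s}H)}(P^{\pm})=-(-1)^{\mathfrak m^{\mathfrak s}(\alpha_0^{\pm},H)}\,x_0,
\]
not $(-1)^{\mathfrak m^{\mathfrak s}(\alpha_0^{\pm},H)}x_0$. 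Your stated version fails the sanity check against the theorem: if $\mathfrak n^{\mathfrak s}(\alpha_0^-,H)$ is odd and $\mathfrak n^{\mathfrak s}(\alpha_0^+,H)$ is even (so $\mathfrak i^{\mathfrak s}(\alpha_0,H)=-1$), subtracting your formulas gives $(-1)^{\mathfrak m^{\mathfrak s}(\alpha_0^-,H)}x_0$, whereas the theorem asserts $(-1)^{\mathfrak m^{\mathfrak s}(\alpha_0^-,H)}\,\mathfrak i^{\mathfrak s}\,x_0=-(-1)^{\mathfrak m^{\mathfrak s}(\alpha_0^-,H)}x_0$. With the corrected sign, the subtraction does reproduce the theorem, so this is a recoverable slip rather than a structural flaw, but as written the intermediate claim and the final answer contradict each other.

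You also flag, correctly, the ``one-parameter collapse'' of the recurrence (only folded types $({}^t H)$ feed into the $({}^s H)$-coefficient) as the main obstacle and leave it unresolved. Note that this is also where the justification of your ``$0$ if even'' branch lives: once $\mathfrak n^{\mathfrak s}(\alpha_0^{\pm},H)$ is even the level-$\mathfrak s$ block contributes nothing at $({}^{\mathfrak s}H)$, but cross-terms $({}^{t_1}H)\cdot({}^{t_2}H)$ from higher folds can \emph{a priori} inject a nonzero $({}^{\mathfrak s}H)$-coefficient, and ruling this out requires precisely the maximality argument you describe. The paper sidesteps this per-$\pm$ computation entirely by building the required bookkeeping into Lemma \ref{lem:bounded}; if you keep your per-$\pm$ route, you need to prove the collapse explicitly rather than treat it as routine.
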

\begin{proof} 
For convenience, take $\alpha^*_0 \in \{\alpha^\pm_0 \}$.  Adopting the notation
\begin{align} \label{loc_bif_inv_3}
\rho_G(\alpha^*_0):= \prod\limits_{(n,m,j) \in \Sigma(\alpha^*_0)} \deg_{\mathcal V_{m,j}},
\end{align}
the local bifurcation invariant (cf. \ref{sec:local-bif-inv}) becomes
\begin{align*}
    \omega_G(\alpha_0) =  \rho_G(\alpha^-_0) - \rho_G(\alpha^+_0).
\end{align*}
If one also puts
\begin{align*}
   \Sigma^0(\alpha_0^*,H) := \{ (n,m,j) \in \Sigma(\alpha_0^*) : \forall_{s' \in \mathbb{N}} \;
\operatorname{coeff}^{{}^{s'}H}(\deg_{\mathcal V_{m,j}}) = 0 \},
\end{align*}
and, for each $s = 0,1,\ldots$ defines 
\begin{align} \label{loc_bif_inv_5}
    \rho^s_G(\alpha^*_0, H) := \prod\limits_{(n,m,j) \in \Sigma^{s}(\alpha^*_0, H)} \deg_{\mathcal V_{m,j}},
\end{align}
then the product of basic degrees \eqref{loc_bif_inv_3} becomes
\begin{align*} 
        \rho_G(\alpha^*_0) = \prod\limits_{s \in \mathbb{N} \cup \{0\}} \rho^s_G(\alpha^*_0, H),
\end{align*}
since $\Sigma(\alpha_0^*) = \bigcup_{s \in \mathbb{N} \cup \{0\}} \Sigma^s(\alpha_0^*,H)$
is a partition of the index set \eqref{index_2}.
It follows, from Lemma \eqref{l4.4} and its Corollary \eqref{l4.5}, that each of \eqref{loc_bif_inv_5} is of the form
\begin{align*}
    \rho^s_G(\alpha^*_0, H) =
    \begin{cases}
        (G) + b_0 \quad& \text{ if } s=0;\\
        (G) - y_s(\alpha^*_0)({}^{s}H) + b_s \quad& \text{ if } s \geq 1,
    \end{cases} 
\end{align*}
where $b_0,b_s \in A(G)$ are such that $
\operatorname{coeff}^{{}^{s'}H}(b_0)= \operatorname{coeff}^{{}^{s'}H}(b_s) = 0$ 
for all $s' \in \mathbb{N}$ and where the coefficients $y_s(\alpha^*_0) \in \mathbb{Z}$ are determined by the rule
\begin{align*}
    y_s(\alpha^*_0) =
    \begin{cases}
        0 \quad& \text{ if } \mathfrak n^s(\alpha^*_0, H) \text{ is even};\\
        x_0 \quad& \text{ if } \mathfrak n^s(\alpha^*_0, H) \text{ is odd},
    \end{cases} 
\end{align*}
equivalently
\begin{align*}
    y_s(\alpha^*_0) = \frac{x_0}{2}(1 - (-1)^{\mathfrak n^s(\alpha^*_0, H)}).
\end{align*} 
Since $\mathfrak n^s(\alpha^\pm_0, H)$ are of the same parity for all $s > \mathfrak s$, one has
\begin{align*}
    \rho_G^s(\alpha^-_0) \cdot \rho_G^s(\alpha^+_0) = (G) + c_s, \; s > \mathfrak s,
\end{align*}
where $c_s \in A(G)$ is such that  $
\operatorname{coeff}^{{}^{s'}H}(c_s)=0$ 
for all $s' \in \mathbb{N}$. Therefore, the local bifurcation invariant can be expressed in terms of the quantities \eqref{loc_bif_inv_5} as follows
\begin{align*}
    \omega_G(\alpha_0) =  \prod\limits_{s > \mathfrak s} \rho_G^{s}(\alpha^-_0,H) \cdot \left( \rho_G^{\mathfrak s}(\alpha^-_0,H) - \rho_G^{\mathfrak s}(\alpha^+_0,H) + \bm \beta \right) 
\end{align*}
where $\bm \beta \in A(G)$ is such that $\operatorname{coeff}^{{}^{s'}H}(\bm \beta) = 0$ for all $s' \geq \mathfrak s$ . 
\vs
To complete the proof of Theorem \ref{th:bounded}, we will need the following Lemma.
\begin{lemma} \label{lem:bounded}
Take $s,s_0 \in \mathbb{N}$. Using the notation \eqref{loc_bif_inv_5}, one has 
\begin{align*}
 \operatorname{coeff}^{{}^{s_0}H}(\rho_G^s(\alpha^*_0,H) \cdot \pm x_0 ({}^{s_0}H)) = 
\begin{cases}
        \mp x_0 \quad& \text{ if } ({}^{s_0}H) \leq ({}^{s}H) \text{ and } \mathfrak n^s(\alpha^*_0, H) \text{ is odd};\\
        \pm x_0 \quad& \text{ otherwise}.
\end{cases} 
\end{align*}
\end{lemma} 
\begin{proof}
Indeed, consider the relevant Burnside Ring product
\begin{align} \label{loc_bif_inv_9}
   \rho_G^s(\alpha^*_0,H) \cdot \pm x_0 ({}^{s_0}H) &= \left( (G) - y_s(\alpha^*_0)({}^{s}H) + b_s \right) \cdot \left( \pm x_0 ({}^{s_0}H) \right) \nonumber \\
   &= \pm x_0({}^{s_0}H) \mp y_s(\alpha^*_0)x_0 ({}^{s}H) \cdot ({}^{s_0}H) + \bm \alpha
\end{align}
where $\bm \alpha \in A(G)$ is such that $\operatorname{coeff}^{{}^{s'}H}(\bm \alpha) = 0$ for all $s' \in \mathbb{N}$. Now, if $\mathfrak n^s(\alpha^*_0, H)$ is even, then $y_s(\alpha^*_0) = 0$ and the result follows. Supposing instead that $2 \nmid \mathfrak n^s(\alpha^*_0, H)$, then \eqref{loc_bif_inv_9} becomes
\begin{align*}
    \rho_G^s(\alpha^*_0,H) \cdot \pm x_0 ({}^{s_0}H) = \pm x_0(1 - x_0 d_0)({}^{s_0}H) + \bm \gamma
\end{align*}
where $\bm \gamma \in A(G)$ is such that $\operatorname{coeff}^{{}^{s'}H}(\bm \gamma) = 0$ for all $s' \in \mathbb{N}$ and $d_0 := \operatorname{coeff}^{{}^{s_0}H}( ({}^{s}H)\cdot({}^{s_0}H))$ is given by the recursive formula (cf. { Appendix} \ref{subsec:G-degree}) as follows
\begin{align*}
    d_0 := \frac{n({}^{s_0}H,{}^{s}H) n({}^{s_0}H,{}^{s_0}H) \vert W(H) \vert^2}{\vert W(H) \vert}.
\end{align*}
Now, if $({}^{s_0}H) \nleq ({}^{s}H)$, then $n({}^{s_0}H,{}^{s}H) = 0$ and the result follows. In the case that $({}^{s_0}H) \leq ({}^{s}H)$, one has $n({}^{s_0}H,{}^{s}H) = n({}^{s_0}H,{}^{s_0}H) = 1$ such that $d_0 = \vert W(H) \vert$  and the result follows from the fact that $x_0 \vert W(H) \vert = 2$.
\end{proof}
\vs \noindent
{\it Completion of the proof of Theorem \ref{th:bounded}} The result follows from Lemma \ref{lem:bounded} together with the observation that
\begin{align*}
\operatorname{coeff}^{{}^{s}H}\left( \rho_G^{\mathfrak s}(\alpha^-_0,H) - \rho_G^{\mathfrak s}(\alpha^+_0,H) + \bm \beta \right) =  
    \begin{cases}
       \mathfrak i^{\mathfrak s}(\alpha_0,H)x_0 \quad& \text{ if } s = \mathfrak s;\\
        0 \quad& \text{ if } s > \mathfrak s.
    \end{cases} 
\end{align*}
\end{proof}
\vs
\begin{corollary}\label{th:cor_bounded}
Under the assumptions \ref{c1}-\ref{c5}, if $(\alpha_0,0) \in \Lambda$ is an isolated critical point with 
a deleted neighborhood $[\alpha^-,\alpha^+] \setminus \{\alpha_0\}$ on which the linear operator $\mathscr A(\alpha): \mathscr H \rightarrow \mathscr H$ is an isomorphism and if there is an orbit type  $(H) \in \mathfrak M_{1}$ and a number $\mathfrak s:=\mathfrak s(\alpha_0,H) \in \bn$ such that $\mathfrak i^{\mathfrak s}(\alpha_0,H)\not=0$ and $\mathfrak i^{m}(\alpha_0,H)=0$ for all $m > \mathfrak s$, then 
system \eqref{eq:bif} admits a branch of non-radial solutions $\mathscr C$ with branching point $(\alpha_0,0)$ and with $(G_u) \geq (^{\mathfrak s}H)$ for all $u \in \mathscr C$.
\end{corollary}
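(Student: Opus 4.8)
The plan is to reduce the statement to Theorem~\ref{th:bounded} and the equivariant Krasnosel'skii result (Theorem~\ref{th:Kras}), the only genuinely new ingredient being a passage to the fixed-point subspace of the maximal orbit type $({}^{\mathfrak s}H)$, which upgrades ``symmetries at least $({}^{\mathfrak s}H)$'' to ``$(G_u)\ge({}^{\mathfrak s}H)$ for every $u\in\mathscr C$''. First I would record the standing facts: by Remarks~\ref{rm0} and \ref{rm1}, under \ref{c1}--\ref{c5} the field $\mathscr F$ of \eqref{eq:Lap1} is a completely continuous $G$-equivariant field satisfying \ref{b1} and \ref{b2} with $D\mathscr F(\alpha,0)=\mathscr A(\alpha)$; by hypothesis $(\alpha_0,0)$ is an isolated critical point with a deleted regular neighborhood on which $\mathscr A(\alpha)$ is an isomorphism, so $\omega_G(\alpha_0)$ is well defined by \eqref{loc_bif_invariant2}. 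Note also that $\mathfrak s:=\mathfrak s(\alpha_0,H)\in\bn$ is a genuine positive integer, since the assumption $\mathfrak i^{\mathfrak s}(\alpha_0,H)\neq0$ forces the set in \eqref{eq:loc-maxfolding} to be nonempty; in particular $\mathfrak s\ge1$.

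Next I would invoke Theorem~\ref{th:bounded}: the hypotheses of the Corollary are verbatim those of that theorem, so it gives
\[
\operatorname{coeff}^{{}^{\mathfrak s}H}\!\big(\omega_G(\alpha_0)\big)=(-1)^{\mathfrak m^{\mathfrak s}(\alpha^-_0,H)}\,\mathfrak i^{\mathfrak s}(\alpha_0,H)\,x_0 .
\]
Since $\mathfrak i^{\mathfrak s}(\alpha_0,H)\in\{-1,1\}$ and $x_0\in\{1,2\}$, this coefficient is nonzero; in particular $\omega_G(\alpha_0)\neq0$ in $A(G)$, and — because $({}^{\mathfrak s}H)\in\mathfrak M_{\mathfrak s}\subseteq\Phi_0(G;\mathscr H_{\mathfrak s}\setminus\{0\})$ with $\mathfrak s>0$ — the hypothesis of the non-radial branching principle of Remark~\ref{rm2} is met, so that a branch of \emph{non-radial} solutions of \eqref{eq:bif} bifurcates from $(\alpha_0,0)$.

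To obtain the refined symmetry statement I would restrict the operator equation \eqref{eq:bif} to the fixed-point subspace $\mathscr H^{{}^{\mathfrak s}H}$. Since $(H)\in\mathfrak M_1$, the orbit type $({}^{\mathfrak s}H)\in\mathfrak M_{\mathfrak s}$ is maximal in $\Phi_0(G;\mathscr H_{\mathfrak s}\setminus\{0\})$. The subspace $\mathscr H^{{}^{\mathfrak s}H}$ is invariant under each $\mathscr F(\alpha,\cdot)$ by $G$-equivariance and carries an isometric action of the Weyl group $W({}^{\mathfrak s}H)$; the restriction $\mathscr F^{{}^{\mathfrak s}H}$ is again a completely continuous $W({}^{\mathfrak s}H)$-equivariant field satisfying \ref{b1}--\ref{b2}, with linearization $\mathscr A(\alpha)|_{\mathscr H^{{}^{\mathfrak s}H}}$ (an isomorphism precisely where $\mathscr A(\alpha)$ is) and with $(\alpha_0,0)$ still an isolated critical point. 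By the restriction property of the Leray--Schauder $G$-equivariant degree (Appendix~\ref{subsec:G-degree}), the coefficient of the local bifurcation invariant of $\mathscr F^{{}^{\mathfrak s}H}$ at the minimal orbit type $({}^{\mathfrak s}H)$ of $\mathscr H^{{}^{\mathfrak s}H}$ agrees, up to sign and a positive integer factor tied to $|W({}^{\mathfrak s}H)|$, with $\operatorname{coeff}^{{}^{\mathfrak s}H}(\omega_G(\alpha_0))$, hence is nonzero. Applying Theorem~\ref{th:Kras} \emph{inside} $\mathscr H^{{}^{\mathfrak s}H}$ then produces a branch $\mathscr C\subseteq\br\times\mathscr H^{{}^{\mathfrak s}H}$ of nontrivial solutions with branching point $(\alpha_0,0)$; by construction ${}^{\mathfrak s}H\le G_u$, i.e. $(G_u)\ge({}^{\mathfrak s}H)$, for every $(\alpha,u)\in\mathscr C$, and since $\mathfrak s\ge1$ with $({}^{\mathfrak s}H)$ maximal in $\Phi_0(G;\mathscr H_{\mathfrak s}\setminus\{0\})$, the analysis in Remark~\ref{rm2} shows that no such $u\ne0$ lies in the radial subspace $\mathscr H_0$, so $\mathscr C$ consists of non-radial solutions.

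The main obstacle I anticipate is the third step: carefully justifying the reduction to $\mathscr H^{{}^{\mathfrak s}H}$ and, in particular, tracking the restriction homomorphism $A(G)\to A(W({}^{\mathfrak s}H))$ together with the $|W({}^{\mathfrak s}H)|$-normalization precisely enough to conclude that the reduced local bifurcation invariant is nonzero whenever $\operatorname{coeff}^{{}^{\mathfrak s}H}(\omega_G(\alpha_0))\neq0$. The remaining points — checking the hypotheses of Theorems~\ref{th:bounded} and \ref{th:Kras}, and identifying the radial solutions of \eqref{eq:Lap} with $\mathscr H_0$ — are routine given the material already developed.
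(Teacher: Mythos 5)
The paper gives no explicit proof of Corollary~\ref{th:cor_bounded}; it is stated as an immediate consequence of Theorem~\ref{th:bounded}, Theorem~\ref{th:Kras}, and Remark~\ref{rm2}. Your first two paragraphs reconstruct exactly this chain (nonvanishing of $\operatorname{coeff}^{^{\mathfrak s}H}(\omega_G(\alpha_0))$ via Theorem~\ref{th:bounded}, bifurcation via Theorem~\ref{th:Kras}, non-radiality via Remark~\ref{rm2}), so on those points you match the intended argument.

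Where you diverge is the third paragraph. You correctly notice a real tension: as literally stated, Theorem~\ref{th:Kras}(ii) together with Definition~3.4 only yields a branch $\mathcal C$ with $\mathcal C\cap\mathcal S^{^{\mathfrak s}H}\neq\emptyset$, while the Corollary asserts $(G_u)\geq({}^{\mathfrak s}H)$ for \emph{every} $u$ on the branch. Your remedy — restrict the whole bifurcation problem to $\mathscr H^{^{\mathfrak s}H}$, work with $W({}^{\mathfrak s}H)$, and conclude that any branch obtained there automatically has the stated isotropy — is the right idea and is in the spirit of the $\bm K$-fixed-point reduction the paper itself performs in Section~\ref{sec:res_rab_alt}. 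However, the step you flag as delicate is indeed the one that carries the weight: you invoke a ``restriction property of the Leray--Schauder $G$-equivariant degree'' that is \emph{not} among the properties listed in Appendix~\ref{subsec:G-degree}. The recurrence formula \eqref{eq:RF-0} relates $n_H$ to $\deg(f^H,\Omega^H)$ only after subtracting contributions from all orbit types $(K)>(H)$; since $({}^{\mathfrak s}H)$ with $\mathfrak s\geq1$ is not maximal in $\Phi_0(G;\mathscr H\setminus\{0\})$ (the maximal ones are the radial ones, per Remark~\ref{rm2}), one cannot conclude from $\operatorname{coeff}^{^{\mathfrak s}H}(\omega_G(\alpha_0))\neq0$ alone that the \emph{ordinary} degree of the restriction changes, and the correspondence $A(G)\to A(W({}^{\mathfrak s}H))$ sending $({}^{\mathfrak s}H)\mapsto(e)$ and matching coefficients needs to be stated and proved (it is true, but is not in this paper). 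In practice this gap is harmless because the sharper form of the equivariant Krasnosel'skii theorem in \cite{book-new,AED} already produces the branch inside $\overline{\mathcal S^{^{\mathfrak s}H}}$, i.e.\ with $(G_u)\geq({}^{\mathfrak s}H)$ pointwise; if you cite that version explicitly you can dispense with the fixed-point reduction and the unproved restriction property entirely. One small imprecision: $\mathscr A(\alpha)|_{\mathscr H^{^{\mathfrak s}H}}$ is an isomorphism \emph{wherever} $\mathscr A(\alpha)$ is, but not ``precisely where'' — the restricted critical set can be strictly smaller — though this does not affect your conclusion since $(\alpha_0,0)$ does remain critical in the restriction because $\mathscr E^j_{n,\mathfrak s}\cap\mathscr H^{^{\mathfrak s}H}\neq\{0\}$ for the relevant $j$.
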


\subsection{Resolution of the Rabinowitz Alternative} \label{sec:res_rab_alt}
Without an appropriate {\it fixed point reduction} of the bifurcation problem \eqref{eq:bif}, we are unable to guarantee that a branch of non-trivial solutions $\mathcal C$ to \eqref{eq:Lap}  bifurcating from a given isolated critical point $(\alpha_0,0) \in \br \times \mathscr H$ whose existence has been established using a Krasnosel'skii type result (e.g. by Theorem \ref{th:bounded}) is not comprised of {\it radial solutions}. 
With this in mind, consider the subgroup
$\bm K:=\{(-1,e,-1),(1,e,1)\}\le O(2)\times \Gamma\times \bz_2$ and denote by $\mathscr F^{\bm K}:\br\times \mathscr H^{\bm K}\to \mathscr H^{\bm K}$ the restriction 
\begin{equation}\label{eq:H-map}
\mathscr F^{\bm K}:=\mathscr F|_{\br\times \mathscr H^{\bm K}},
\end{equation}
of the operator \eqref{eq:Lap1} to the $\bm K$-fixed point space $\mathscr H^{\bm K}$. Clearly, any solution $(\alpha,u) \in \br \times \mathscr H^{\bm K}$ to the equation 
\begin{equation}\label{eq:H-Lap}
\mathscr F^{\bm K}(\alpha,u)=0,
\end{equation}
is also solution to \eqref{eq:bif}. 
Notice also that any radial solution to \eqref{eq:H-Lap} belongs to the set of trivial solutions (cf. Condition \ref{b1})  
\[
M:=\{(\alpha,0): \alpha \in \br, \; 0 \in \mathscr H \}.
\]
Therefore, any branch of of non-trivial solutions to the bifurcation problem \eqref{eq:H-Lap} consists solely of non-radial solutions. In this $\bm K$-fixed point setting, we must 
adapt each of the notions introduced in Section \ref{sec:rab-alt} used to describe the Rabinowitz alternative for the equation \eqref{eq:bif} to the bifurcation map \eqref{eq:H-map}. To begin, notice that the $\bm K$-fixed point space $ \mathscr H^{\bm K}$ is an isometric Hilbert representation of the group $\mathcal G:=N(\bm K)/\bm K=O(2)\times \Gamma$ with the $\mathcal G$-isotypic decomposition 
\begin{align*} 
   \mathscr H^{\bm K} = \bigoplus_{j=1}^r \overline{\bigoplus\limits_{m=1}^\infty \mathscr H_{2m-1,j}}, \quad \mathscr H_{2m-1,j}:=\overline{\bigoplus\limits_{n=1}^\infty \mathscr E_{n,2m-1}^j}. 
\end{align*}
We denote by $\mathscr S^{\bm K}$ the set of $\bm K$-fixed non-trivial solutions to \eqref{eq:bif}, i.e.
\[
\mathscr S^{\bm K} :=\{(\alpha,u)\in \br\times \mathscr H^{\bm K}:\mathscr F^{\bm K}(\alpha,0)=0\;\; \text{ and } \;\; u\not=0\},
\]
and by $\mathscr A^{\bm K} : \br \times \mathscr H^{\bm K}\to \mathscr H^{\bm K}$ the restriction of the operator $\mathscr A: \br \times \mathscr H \rightarrow \mathscr H$ to $\mathscr H^{\bm K}$, i.e. for each $\alpha \in \br$
\begin{equation}\label{H-Linearization}
   \mathscr A^{\bm K}(\alpha): =\mathscr A(\alpha)|_{\mathscr H^{\bm K}}: \mathscr H^{\bm K}\to \mathscr H^{\bm K}. 
\end{equation}
As before, the {\it critical set} of \eqref{eq:H-Lap}, now denoted $\Lambda^{\bm K}$, is the set of trivial solutions $(\alpha_0,0) \in M$ for which $\mathscr A^{\bm K}(\alpha_0)$ is not an isomorphism
\[
\Lambda^{\bm K}:=\{(\alpha,0)\in \br\times \mathscr H: \mathscr A^{\bm K}(\alpha):\mathscr H^{\bm K}\to \mathscr H^{\bm K}\;\; \text{ is not an isomorphism}\}.
\]
Next, we describe the spectrum of \eqref{H-Linearization} in terms of the spectra $\sigma(\mathscr A^j_{n,m}(\alpha))$ (cf. Lemma \ref{lem:spectral_decomposition}) as follows
\[
\sigma(\mathscr A^ {\bm K}(\alpha))=\bigcup_{m=1}^\infty\bigcup_{n=1}^\infty\bigcup_{j=1}^r \sigma(\mathscr A_{n,2m-1}^j(\alpha)).
\]
Assume that for a given $\alpha \in \br$, the operator $\mathscr A^ {\bm K}(\alpha): \mathscr H^{\bm K} \rightarrow \mathscr H^{\bm K}$ is an isomorphism. If we refine the index set \eqref{index_2} to include only those indices $(n,m,j) \in \Sigma(\alpha)$ relevant to the $\bm K$-fixed point setting with the notation
\[
\Sigma^{\bm K}(\alpha) := \{ (n,m,j) \in \Sigma(\alpha) : 2 \nmid m \},
\]
then the $\mathcal G$-equivariant degree $\mathcal G\text{-deg}(\mathscr A^ {\bm K}(\alpha), B(\mathscr H^{\bm K}))$ can be computed as follows
\begin{align*}
    \mathcal G\text{-deg}(\mathscr A^ {\bm K}(\alpha), B(\mathscr H^{\bm K})) = \prod\limits_{(n,m,j) \in \Sigma^{\bm K}(\alpha)} \wt{\deg}_{\cV_{j,m}},
\end{align*}
where, to distinguish between $G$-basic degrees and $\mathcal G$-basic degrees, we have introduced the notation
\[
\wt{\deg}_{\cV_{j,m}}:=\mathcal G\text{-deg}(-\id,B(\cV_{j,m})).
\]
At this point, Lemma \ref{l4.1} can be reformulated for the map 
$\mathscr F^{\bm K}$ as follows:
\begin{lemma}
Under the assumptions \ref{c0}--\ref{c5} and for any isolated critical point $(\alpha_0,0)\in \Lambda^{\bm K}$ with deleted regular neighborhood $\alpha_0^- < \alpha_0 < \alpha_0^+$, the local bifurcation invariant $\omega_{\mathcal G}(\alpha_0) :=\mathcal G\text{-deg}(\mathcal A^{\bm K}(\alpha_0^-), B(\mathscr H^{\bm K}))-\mathcal G\text{-deg}(\mathcal A^{\bm K}(\alpha_0^+), B(\mathscr H^{\bm K}))$ is given by
\begin{align}\label{eq:loc-bif-Lap-K}
\omega_{\mathcal G}(\alpha_0) = \prod\limits_{(n,m,j) \in \Sigma^{\bm K}(\alpha^-)} \wt{\deg}_{\mathcal V_{m,j}} - \prod\limits_{(n,m,j) \in \Sigma^{\bm K}(\alpha^+)}\wt{ \deg}_{\mathcal V_{m,j}}.
\end{align}   
\end{lemma}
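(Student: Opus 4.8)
The plan is to mirror the proof of Lemma \ref{l4.1}, but carried out inside the $\bm K$-fixed point subspace $\mathscr H^{\bm K}$ rather than in $\mathscr H$. First I would record that, since $\alpha_0^\pm$ lie in a deleted regular neighborhood of $(\alpha_0,0)$ for the restricted equation \eqref{eq:H-Lap}, the operators $\mathscr A^{\bm K}(\alpha_0^\pm)$ defined in \eqref{H-Linearization} are $\mathcal G$-equivariant linear isomorphisms of the isometric Hilbert $\mathcal G$-representation $\mathscr H^{\bm K}$, where $\mathcal G = N(\bm K)/\bm K = O(2)\times\Gamma$; hence each Leray-Schauder $\mathcal G$-degree $\mathcal G\text{-deg}(\mathscr A^{\bm K}(\alpha_0^\pm), B(\mathscr H^{\bm K}))$ is well defined and, exactly as in Section \ref{sec:local-bif-inv}, the difference $\omega_{\mathcal G}(\alpha_0)$ does not depend on the admissible choices involved. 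It therefore suffices to evaluate $\mathcal G\text{-deg}(\mathscr A^{\bm K}(\alpha), B(\mathscr H^{\bm K}))$ at an arbitrary regular value $\alpha$ and subtract.

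Next I would use the $\mathcal G$-isotypic decomposition of $\mathscr H^{\bm K}$ recorded above, whose summands are precisely the eigenblocks $\mathscr E^j_{nm}$ with $m$ odd: indeed the generator $(-1,e,-1)\in\bm K$ acts on $\mathscr E^j_{nm}$ by $u(z)\mapsto -u(-z)$, and since $\cos\!\big(m(\theta+\pi)\big) = (-1)^m\cos(m\theta)$ (and similarly for $\sin$), this block lies in $\mathscr H^{\bm K}$ precisely when $m$ is odd. Because $\mathscr A(\alpha)$ is $G$-equivariant it preserves each block, and by assumption \ref{c0} together with Lemma \ref{lem:spectral_decomposition} it acts on $\mathscr E^j_{nm}$ as the scalar $\xi_{n,m,j}(\alpha)\,\operatorname{Id}$. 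Applying the product property of the Leray-Schauder $\mathcal G$-degree (cf. Appendix \ref{subsec:G-degree}), and using that the bound $s_{1m} > m(m+2)$ forces all but finitely many of the scalars $\xi_{n,m,j}(\alpha)$ to be positive, one obtains a finite Burnside ring product in $A(\mathcal G)$ in which each block $\mathscr E^j_{nm}$ with $m$ odd contributes $\big(\wt{\deg}_{\mathcal V_{m,j}}\big)^{m_j}$ when $\xi_{n,m,j}(\alpha) < 0$ and the unit $(\mathcal G)$ otherwise --- the exponent $m_j$ appearing because $\mathscr E^j_{nm}$ is modeled on $m_j$ copies of the irreducible $\mathcal G$-representation $\mathcal V_{m,j}$, just as in \eqref{fact_2}.

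Finally, invoking the involutivity of the $\mathcal G$-basic degrees in $A(\mathcal G)$, I would reduce $\big(\wt{\deg}_{\mathcal V_{m,j}}\big)^{m_j}$ to $\wt{\deg}_{\mathcal V_{m,j}}$ when $m_j$ is odd and to $(\mathcal G)$ when $m_j$ is even (the analogue of \eqref{fact_3}). The surviving indices are exactly those of $\Sigma^{\bm K}(\alpha) = \{(n,m,j)\in\Sigma(\alpha) : 2\nmid m\}$, which bundles together negativity of the eigenvalue, oddness of $m_j$, and oddness of $m$; this gives $\mathcal G\text{-deg}(\mathscr A^{\bm K}(\alpha), B(\mathscr H^{\bm K})) = \prod_{(n,m,j)\in\Sigma^{\bm K}(\alpha)} \wt{\deg}_{\mathcal V_{m,j}}$, and evaluating at $\alpha_0^-$ and $\alpha_0^+$ and subtracting yields \eqref{eq:loc-bif-Lap-K}.

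The degree-theoretic bookkeeping here is routine once the preceding sections are in hand; the only step demanding genuine care is the fixed-point reduction itself --- checking that $N(\bm K)/\bm K$ acts isometrically on $\mathscr H^{\bm K}$, that $\mathscr H^{\bm K}$ is spanned exactly by the odd-angular-frequency eigenblocks, and that the restriction $\mathscr A^{\bm K}(\alpha)$ is again a completely continuous field of the form $\operatorname{Id} - (\text{compact})$, so that the $\mathcal G$-degree and its product property apply. With those structural facts settled, the computation is word for word the one underlying Lemma \ref{l4.1}.
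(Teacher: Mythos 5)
Your proposal is correct and is exactly the argument the paper intends; the paper does not spell out a proof of this lemma but treats it as the $\bm K$-fixed-point analogue of Lemma~\ref{l4.1}, which is precisely what you reconstruct. You correctly identify that the generator $(-1,e,-1)\in\bm K$ acts on $\mathscr E^j_{nm}$ as $u\mapsto -(-1)^m u$, so that $\mathscr H^{\bm K}$ is spanned by the odd-$m$ eigenblocks, and the remaining steps (scalar action under~\ref{c0}, multiplicativity and involutivity of the $\mathcal G$-basic degrees, finiteness from $s_{1m}>m(m+2)$) are the word-for-word adaptation of the derivation of \eqref{fact_4} inside $A(\mathcal G)$ instead of $A(G)$.
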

Likewise, Theorem \ref{th:Rabinowitz-alt} becomes:
\begin{theorem}\label{th:Rabinowitz-alt-K}{\bf (Rabinowitz' Alternative)}
Under the assumptions \ref{c0}--\ref{c5}, let $\mathcal U \subset \br \times \mathscr H^{\bm K}$  be an open bounded $\mathcal G$-invariant set  with $\partial \mathcal U \cap \Lambda^{\bm K} = \emptyset$. If $\mathcal C$ is a branch of nontrivial solutions to \eqref{eq:H-Lap} bifurcating from an isolated critical critical point $(\alpha_0,0) \in \mathcal U \cap \Lambda$, then one has the following alternative:
\begin{enumerate}[label=$(\alph*)$]
\item  either $\mathcal C \cap \partial \mathcal U \neq \emptyset$;
    \item\label{alt_b} or there exists a finite set
    \begin{align*}
        \mathcal C \cap \Lambda^{\bm K} = \{ (\alpha_0,0),(\alpha_1,0), \ldots, (\alpha_n,0) \},
    \end{align*}
    satisfying the following relation
    \begin{align*}
        \sum\limits_{k=1}^n \omega_{\mathcal G}(\alpha_k) = 0.
    \end{align*}
\end{enumerate} 
\end{theorem}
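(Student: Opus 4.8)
The plan is to obtain Theorem \ref{th:Rabinowitz-alt-K} as a direct specialization of the abstract Rabinowitz alternative, Theorem \ref{th:Rabinowitz-alt}, applied to the restricted field $\mathscr F^{\bm K}: \br \times \mathscr H^{\bm K} \to \mathscr H^{\bm K}$ with the group $\mathcal G = N(\bm K)/\bm K = O(2) \times \Gamma$ in place of the generic group appearing there. Thus the work reduces to verifying that the hypotheses \ref{b1}, \ref{b2} and \ref{b3} are met in this setting, after which the stated alternative is the literal conclusion of Theorem \ref{th:Rabinowitz-alt}.

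First I would record that $\mathscr H^{\bm K}$ is an isometric Hilbert $\mathcal G$-representation (this was already noted in the text, together with its $\mathcal G$-isotypic decomposition), that the restricted map $\mathscr F^{\bm K}$ inherits complete continuity and $\mathcal G$-equivariance from $\mathscr F$ (cf. Remark \ref{rm1} combined with the fact that restriction to a closed invariant subspace of a completely continuous field is again a completely continuous field), and that the set of trivial solutions is $M = \{(\alpha,0)\}$, which gives \ref{b1}. Condition \ref{b2} follows because $D_u \mathscr F^{\bm K}(\alpha,0) = \mathscr A^{\bm K}(\alpha) = \mathscr A(\alpha)|_{\mathscr H^{\bm K}}$ depends continuously on $\alpha$, being the restriction of the continuous family $\mathscr A(\alpha)$. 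For \ref{b3} I would argue that the critical set $\Lambda^{\bm K}$ is discrete: by the spectral description $\sigma(\mathscr A^{\bm K}(\alpha)) = \bigcup_{m,n,j} \sigma(\mathscr A^j_{n,2m-1}(\alpha))$ and Lemma \ref{lem:spectral_decomposition}, a parameter $\alpha_0$ lies in $\Lambda^{\bm K}$ exactly when $s_{n,2m-1} = \mu_j(\alpha_0)$ for some admissible triple; since $s_{1m} > m(m+2)$ forces only finitely many Bessel eigenvalues to lie below any given bound, and since for each fixed triple the continuous function $\alpha \mapsto s_{n,2m-1} - \mu_j(\alpha)$ has isolated zeros under the standing hypotheses (this is where one invokes the discreteness already assumed for the full problem, or an explicit transversality/monotonicity assumption on the $\mu_j$), the union of solution sets is discrete.

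With \ref{b1}--\ref{b3} in hand for $(\mathscr F^{\bm K}, \mathcal G)$, the local bifurcation invariant $\omega_{\mathcal G}(\alpha_0)$ is well-defined at each point of $\Lambda^{\bm K}$ and is given by the formula established in the Lemma immediately preceding the statement, and Theorem \ref{th:Rabinowitz-alt} applies verbatim: for any open bounded $\mathcal G$-invariant $\mathcal U \subset \br \times \mathscr H^{\bm K}$ with $\partial \mathcal U \cap \Lambda^{\bm K} = \emptyset$ and any branch $\mathcal C$ bifurcating from $(\alpha_0,0) \in \mathcal U \cap \Lambda^{\bm K}$, either $\mathcal C$ meets $\partial \mathcal U$ or $\mathcal C \cap \Lambda^{\bm K}$ is a finite set with $\sum_k \omega_{\mathcal G}(\alpha_k) = 0$. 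I expect the only genuinely substantive point — and hence the main obstacle — to be the verification of \ref{b3}, i.e. confirming that passing to the $\bm K$-fixed subspace does not destroy discreteness of the critical set; once that is settled, the remainder is an application of the cited abstract theorem together with bookkeeping of the isotypic decomposition, and no new degree-theoretic computation is required.
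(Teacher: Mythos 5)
Your proposal is correct and matches the paper's (implicit) approach: the paper presents Theorem \ref{th:Rabinowitz-alt-K} without any proof, simply asserting that "Theorem \ref{th:Rabinowitz-alt} becomes" the stated result after restriction to the $\bm K$-fixed point setting with $\mathcal G = N(\bm K)/\bm K = O(2)\times\Gamma$, which is exactly the specialization you carry out. Your checklist of \ref{b1}--\ref{b3} for the pair $(\mathscr F^{\bm K},\mathcal G)$ is the right set of points to verify, and the argument goes through. One simplification worth making explicit: your treatment of \ref{b3} can be shortened, since if $\mathscr A^{\bm K}(\alpha_0)$ fails to be an isomorphism then some nonzero $u\in\mathscr H^{\bm K}\subset\mathscr H$ lies in its kernel, so $\mathscr A(\alpha_0)$ is not an isomorphism either; hence $\Lambda^{\bm K}\subseteq\Lambda$ and discreteness transfers directly from the full problem (where, as you note, it is supplied by the standing hypotheses together with the boundedness of the $\mu_j$ in \ref{c0_tilde} and the growth $s_{1m}>m(m+2)$). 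No new degree-theoretic computation is required, exactly as you conclude.
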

\noindent To further simplify our exposition, we replace assumption \ref{c0} with:
\begin{enumerate} [label=($\tilde{A}_0$)] 
    \item\label{c0_tilde} For each $j \in \{1,2, \ldots, r\}$ there exists a continuous and bounded map $\mu_j: \br \rightarrow \br$ with
    \[
    A_j(\alpha) := A(\alpha)|_{V_j} = \mu_j(\alpha) \operatorname{Id}|_{V_j}.
    \]
\end{enumerate}
Take $(H) \in \mathfrak M_1$ and $(\alpha_0,0) \in \Lambda^{\bm K}$. Under assumption \ref{c0_tilde}, there is some finite $m' = 0,1, \ldots$ for which $m > m'$ implies $(n,m,j) \notin \Sigma^{\bm K}(\alpha_0)$. Clearly, the quantity
\begin{align} \label{def:maximal_folding_global}
  \bar{\mathfrak s}(H) := \max\limits_{(\alpha,0) \in \Lambda^{\bm K}} \{\mathfrak s(\alpha,H) \}  
\end{align}
and the set
\begin{align} \label{def:set_J}
    \mathfrak J(H) := \{ (\alpha,0) \in \Lambda^{\bm K} : \mathfrak s(\alpha,H) = \bar{\mathfrak s}(H) \}.
\end{align}
are well-defined. The elements of $\mathfrak J(H)$ can always be indexed $(\alpha_1,0), (\alpha_2,0), \ldots \in \mathfrak J(H)$ in such a way that, if $i<j$, then $\alpha_i < \alpha_j$.
\vs
We are now in a position to formulate our main global bifurcation result.
\begin{theorem}\label{th:unbounded-K}
    If there is an orbit type $(H) \in \mathfrak M_1$ for which $2 \nmid \vert \mathfrak J(H) \vert$, then the system \eqref{eq:Lap} admits an unbounded branch $\mathscr C$ of non-radial solutions with symmetries at least $({}^{\bar{\mathfrak s}}H)$, where $\bar{\mathfrak s} := \bar{\mathfrak s}(H)$. Moreover, one has the following alternative: there exists $M>0$ such that, either
\begin{itemize}
\item[(a)] for all $\alpha>M$ with $(\alpha,0)\notin \Lambda^{\bm K}$ one has
$\mathscr C\cap \{\alpha\}\times \mathscr H\not=\emptyset$, or
\item[(b)] for all $\alpha<-M$ with $(\alpha,0)\notin \Lambda^{\bm K}$ one has
$\mathscr C\cap \{\alpha\}\times \mathscr H\not=\emptyset$.
\end{itemize}
\end{theorem}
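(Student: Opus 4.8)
The plan is to pass to the $\bm K$-fixed point space $\mathscr H^{\bm K}$ — where, as noted in Section~\ref{sec:res_rab_alt}, every non-trivial solution of \eqref{eq:H-Lap} is automatically a non-radial solution of \eqref{eq:Lap} — and to run a Krasnosel'skii/Rabinowitz argument for the completely continuous $\mathcal G$-equivariant field $\mathscr F^{\bm K}$, $\mathcal G=O(2)\times\Gamma$, using the \emph{a priori} bound of Lemma~\ref{lem:a-priori} to locate the branch at infinity. All the combinatorial data $\mathfrak n^{s},\mathfrak i^{s},\mathfrak s,\mathfrak m^{s},\bar{\mathfrak s},\mathfrak J(H)$ are read off in the $\bm K$-reduced setting (i.e.\ with $\Sigma$ replaced by $\Sigma^{\bm K}$ and $\deg_{\mathcal V_{m,j}}$ by $\wt{\deg}_{\mathcal V_{m,j}}$), so the local bifurcation invariant $\omega_{\mathcal G}(\alpha_0)$ at an isolated $(\alpha_0,0)\in\Lambda^{\bm K}$ is the one given by \eqref{eq:loc-bif-Lap-K}.

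The first step is to pin down which critical values carry the coefficient of $({}^{\bar{\mathfrak s}}H)$. Repeating the Burnside-ring computation of the proof of Theorem~\ref{th:bounded} with the invariant \eqref{eq:loc-bif-Lap-K}, one obtains for every isolated $(\alpha_0,0)\in\Lambda^{\bm K}$
\[
\operatorname{coeff}^{{}^{\bar{\mathfrak s}}H}\big(\omega_{\mathcal G}(\alpha_0)\big)=
\begin{cases}
(-1)^{\mathfrak m^{\bar{\mathfrak s}}(\alpha_0^-,H)}\,\mathfrak i^{\bar{\mathfrak s}}(\alpha_0,H)\,x_0,& (\alpha_0,0)\in\mathfrak J(H),\\
0,& \text{otherwise};
\end{cases}
\]
the vanishing case holds since $(\alpha_0,0)\notin\mathfrak J(H)$ forces $\mathfrak s(\alpha_0,H)<\bar{\mathfrak s}$, and Theorem~\ref{th:bounded} kills $\operatorname{coeff}^{{}^{s}H}(\omega_{\mathcal G}(\alpha_0))$ for every $s>\mathfrak s(\alpha_0,H)$ (and when $\mathfrak s(\alpha_0,H)$ is undefined, \eqref{eq:loc-bif-Lap-K} vanishes at all foldings of $(H)$ directly). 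Hence $\operatorname{coeff}^{{}^{\bar{\mathfrak s}}H}(\omega_{\mathcal G}(\alpha_0))\neq0$ exactly for $(\alpha_0,0)\in\mathfrak J(H)$, and then it equals $\pm x_0$. Consequently, for each $(\gamma,0)\in\mathfrak J(H)$, Theorem~\ref{th:Kras} applied to $\mathscr F^{\bm K}$ makes $(\gamma,0)$ a branching point and produces a branch of non-trivial (hence non-radial) solutions of \eqref{eq:H-Lap} emanating from it with symmetries at least $({}^{\bar{\mathfrak s}}H)$; let $\mathscr C(\gamma)$ be the unique maximal connected component of $\overline{\mathscr S^{\bm K}}$ through $(\gamma,0)$. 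Distinct components are disjoint, so $(\gamma,0)\mapsto\mathscr C(\gamma)$ partitions the finite set $\mathfrak J(H)$ among the finitely many distinct components $\mathscr C^{(1)},\dots,\mathscr C^{(q)}$ that arise.

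The next step shows that some $\mathscr C^{(l)}$ must be unbounded. Assume all are bounded; each is then compact (complete continuity) and $\mathcal G$-invariant, so, $\Lambda^{\bm K}$ being discrete (cf.\ \ref{b3}), each $\mathscr C^{(l)}$ sits in a bounded open $\mathcal G$-invariant set $\mathcal U_l$ with $\mathscr C^{(l)}\cap\partial\mathcal U_l=\emptyset$ and $\partial\mathcal U_l\cap\Lambda^{\bm K}=\emptyset$. Since $\mathscr C^{(l)}$ bifurcates from $(\gamma,0)\in\mathfrak J(H)\subset\mathcal U_l$ and does not meet $\partial\mathcal U_l$, Theorem~\ref{th:Rabinowitz-alt-K} excludes its alternative (a) and forces $\sum_k\omega_{\mathcal G}(\alpha_k)=0$ over the finitely many critical values $(\alpha_k,0)$ lying on $\mathscr C^{(l)}$; applying $\operatorname{coeff}^{{}^{\bar{\mathfrak s}}H}$ and the first step (only $(\alpha_k,0)\in\mathfrak J(H)$ contribute, each by $\pm x_0$) gives $x_0\,(\#\{+\}-\#\{-\})=0$, so $|\mathfrak J(H)\cap\mathscr C^{(l)}|$ is even; summing over $l$ makes $|\mathfrak J(H)|$ even, contradicting the hypothesis $2\nmid|\mathfrak J(H)|$. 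Therefore some $\mathscr C:=\mathscr C^{(l_0)}$ is unbounded, and, viewed inside $\br\times\mathscr H$, it is a connected unbounded set of non-radial solutions of \eqref{eq:Lap} with symmetries at least $({}^{\bar{\mathfrak s}}H)$. For the concluding alternative: Lemma~\ref{lem:a-priori} bounds $\|u\|_{\mathscr H}<R(\alpha)$ on solutions of \eqref{eq:Lap} with $R$ locally bounded (clear from its proof), so boundedness of the parameter on $\mathscr C$ would make $\mathscr C$ bounded; thus the $\alpha$-projection $\pi(\mathscr C)$ is an unbounded interval (a connected subset of $\br$) and contains a half-line $(M,\infty)$ or $(-\infty,-M)$, which is exactly alternative (a) or (b).

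The decisive step is the first one — re-running the Burnside-ring bookkeeping of Theorem~\ref{th:bounded} in the $\bm K$-reduced setting and, above all, confirming that critical values outside $\mathfrak J(H)$ contribute nothing to $\operatorname{coeff}^{{}^{\bar{\mathfrak s}}H}$; this is precisely what makes the parity count close. The remaining technical points are routine: choosing the $\mathcal U_l$ so that Theorem~\ref{th:Rabinowitz-alt-K} applies (compactness of the branches by complete continuity, together with discreteness of $\Lambda^{\bm K}$), and invoking Lemma~\ref{lem:a-priori} to reduce "unbounded" to "unbounded in $\alpha$".
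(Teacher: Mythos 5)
Your proof is correct and follows essentially the same strategy as the paper: reduce to the $\bm K$-fixed-point setting, use Theorem~\ref{th:bounded} to pin down $\operatorname{coeff}^{{}^{\bar{\mathfrak s}}H}(\omega_{\mathcal G}(\alpha_0))$ as $\pm x_0$ on $\mathfrak J(H)$ and $0$ elsewhere, invoke the Rabinowitz alternative to force a contradiction with the odd cardinality of $\mathfrak J(H)$ if every branch were bounded, and appeal to Lemma~\ref{lem:a-priori} to convert unboundedness in $\mathscr H$ into unboundedness in $\alpha$. The only difference in emphasis is that the paper proves the slightly stronger fact that the coefficients actually \emph{alternate} in sign as $\alpha$ ranges over $\mathfrak J(H)$ (by showing that the parities of $\mathfrak m^{\bar{\mathfrak s}}$ and $\mathfrak n^{\bar{\mathfrak s}}$ match up between consecutive members of $\mathfrak J(H)$), whereas you bypass the alternation and instead argue by component: if each bounded branch $\mathscr C^{(l)}$ had $\sum_k \omega_{\mathcal G}(\alpha_k)=0$, the $\pm x_0$ coefficients would have to cancel pairwise, making $\vert\mathfrak J(H)\cap\mathscr C^{(l)}\vert$ even and hence $\vert\mathfrak J(H)\vert$ even — this is in fact a somewhat cleaner route to the same contradiction, and it spells out explicitly the partition-by-connected-component step that the paper leaves implicit.
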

\begin{remark} \rm \label{rm6}
Conditions (a) and (b) in Theorem \ref{th:unbounded-K} guarantee that the branch $\mathscr C$ of non-radial solutions extends indefinitely either into the direction of increasing $\alpha \to \infty$ or into the decreasing $\alpha \to -\infty$.  
\end{remark}
\begin{proof}
Notice that, for any critical point $(\alpha_i,0) \in \mathfrak J(H)$, the numbers $\mathfrak m^{\bar{\mathfrak s}}(\alpha_i^\pm,H)$ have the same parity (cf. Remark \ref{rm5}). Without loss of generality, assume that $|\mathfrak J(H) | > 1$ and consider any other critical point $(\alpha_j,0) \in \mathfrak J(H)$ with $\alpha_i < \alpha_j$. We will show that $\mathfrak m^{\bar{\mathfrak s}}(\alpha_i^+,H)$ and $\mathfrak m^{\bar{\mathfrak s}}(\alpha_j^-,H)$ are also of the same parity. Indeed, suppose for contradiction that $\mathfrak m^{\bar{\mathfrak s}}(\alpha_i^+,H)$ and $\mathfrak m^{\bar{\mathfrak s}}(\alpha_j^-,H)$ have different parities. Then there is a folding $m > \bar{\mathfrak s}$ such that the sets $\Sigma^{m}(\alpha_i^+,H)$, $\Sigma^{m}(\alpha_j^-,H)$ disagree for some odd numbers of indices (equivalently, the numbers
$\mathfrak n^{m}(\alpha_i^+,H)$, $\mathfrak n^{m}(\alpha_j^-,H)$ have different parity). Consequently,
there must be an intermediate critical point $(\alpha_k,0) \in \mathfrak J(H)$ with $\alpha_i < \alpha_k < \alpha_j$ and $\mathfrak i^{m}(\alpha_k,H) \neq 0$. However, this is in contradiction with the assumption of maximality for $\bar{\mathfrak s}$. It follows that the quantity $(-1)^{\mathfrak m^{\bar{\mathfrak s}}(\alpha_*^-,H)}$ is constant for all critical points $(\alpha_*,0) \in \mathfrak J(H)$.
\vs
On the other hand, let $(\alpha_k,0),(\alpha_{k+1},0) \in \mathfrak J(H)$ be any two consecutive critical points. From the definition of \eqref{def:set_J}, the numbers $\mathfrak i^{\bar{\mathfrak s}}(\alpha_k,H), \mathfrak i^{\bar{\mathfrak s}}(\alpha_{k+1},H)$ must be non-zero. With an argument similar to that the one used above, it can be shown that the numbers $\mathfrak n^{\bar{\mathfrak s}}(\alpha_k^+,H)$ and $\mathfrak n^{\bar{\mathfrak s}}(\alpha_{k+1}^-,H)$ have the same parity, such that
\begin{align*}
    \mathfrak i^{\bar{\mathfrak s}}(\alpha_k,H) \mathfrak i^{\bar{\mathfrak s}}(\alpha_{k+1},H) = -1.
\end{align*}
It follows then, from Theorem \ref{th:bounded}, that $\operatorname{\operatorname{coeff}^{{}^{\bar{\mathfrak s}}H}(\omega_G(\alpha_k))} = - \operatorname{\operatorname{coeff}^{{}^{\bar{\mathfrak s}}H}(\omega_G(\alpha_{k+1}))}$ and also, for any critical point $(\alpha_0,0) \in \Lambda^{\bm K} \setminus \mathfrak J(H)$, that $\operatorname{\operatorname{coeff}^{{}^{\bar{\mathfrak s}}H}(\omega_G(\alpha_0))} = 0$. Therefore, if $2 \nmid \vert \mathfrak J(H) \vert$, there exists an unbounded branch $\mathscr C$ of non-radial solutions bifurcating from each critical point in $\mathfrak J(H)$ with symmetries at least  $(^{\bar{\mathfrak s}}H)$.
Moreover, by Lemma \ref{lem:a-priori}, the branch $\mathscr C$ cannot extend to infinity with respect to the magnitude of vectors $u$ belonging to $\mathscr C$ (for any particular $\alpha$), the only option is that the branch $\mathscr C$ extends to infinity with respect to $\alpha$.
 \end{proof}

\vs
\section{Motivating Example: Vibrating Membranes with Symmetric Coupling}\label{sec:example}
Equations involving the Laplacian operator are sometimes used to describe time-invariant wave or diffusion processes, also called {\it steady-state phenomena}. As a preliminary model for steady-state phenomena, consider the Helmholtz equation
\begin{equation}\label{eq:helmholtz0} 
\begin{cases}
-\Delta u = \lambda^2 u,\\
u|_{\partial D}=0,
\end{cases}
\end{equation}
defined on the planar unit disc $D:= \{z \in \bc : \vert z \vert < 1\}$. Solutions of \eqref{eq:helmholtz0} are called {\it normal modes} and take the form
\begin{align*}
    u_{nm}(r,\theta) = J_m(\sqrt{s_{nm}}r)(A\cos(m\theta)+B\sin(m\theta)), \quad A,B \in \br, \; n \in \bn, \; m=0,1,\ldots
\end{align*}
where each $\sqrt{s_{nm}}$ can be calculated as the $n$-th root of the $m$-th Bessel function of the first kind $J_m$. The classical Helmholtz equation, as is well-known, has limited applicability to real-world problems, which are often inherently nonlinear. For example, in order to study vibrations of a membrane, it is standard to perturb the system \eqref{eq:helmholtz0} with a nonlinearity
$r:\overline D\times \br\to \br$ as follows
\begin{equation}\label{helmholtz}
\begin{cases}
-\Delta u = \lambda^2 u+r(z,u),\\
u|_{\partial D}= 0.
\end{cases}
\end{equation}
\begin{figure}[htbp]
    \centering
    \includegraphics[width=.7\textwidth]{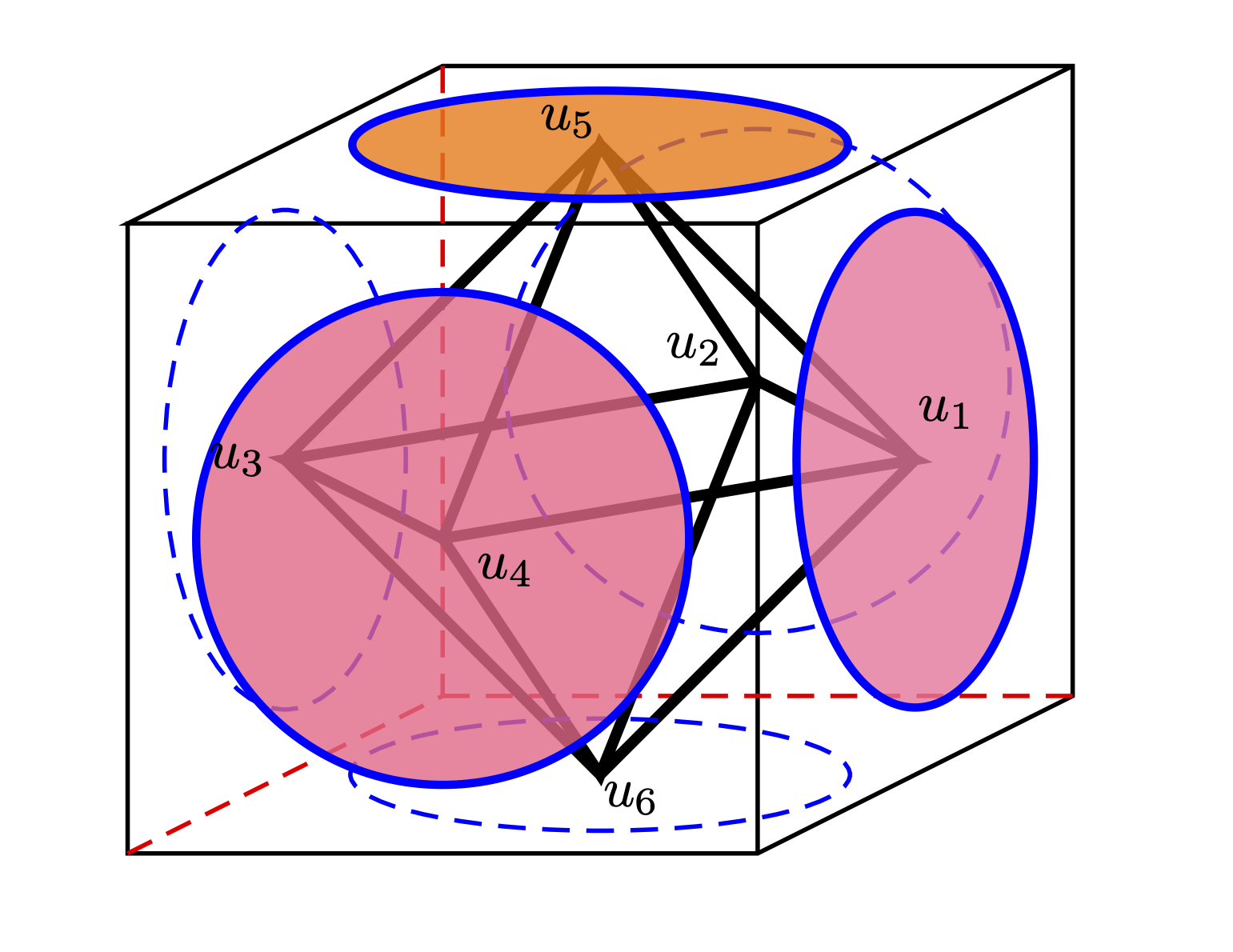}
    \caption{An octahedral configuration of six coupled membranes.}
    \label{fig:6membranes}
\end{figure}
\\Of course, the single membrane model \eqref{helmholtz} is still too simplistic to model
many physically integrated systems. A more realistic problem might instead involve a configuration of some number of coupled oscillating membranes. Consider, for example, a collection of six membranes arranged on corresponding faces of a cube, each coupled to its adjacent neighbors and modeled by a system of identical non-linear Helmholtz equations
\begin{equation}\label{eq:example_Lap}
\begin{cases}
-\Delta u=f(\alpha, z,u), \quad u \in V \\
u|_{\partial D}=0,
\end{cases}
\end{equation}
where $V = \br^6$ and $f: \br \times \overline{D} \times V \rightarrow V$ is the vector-valued function
\begin{equation}\label{eq:ex-f}
f(\alpha,z,u)=\left[\begin{array}{c}  f_1(\alpha,z,u_1,u_2,u_3,u_4,u_5,u_6)\\
f_2(\alpha,z,u_1,u_2,u_3,u_4,u_5,u_6)\\
f_3(\alpha,z,u_1,u_2,u_3,u_4,u_5,u_6)\\
f_4(\alpha,z,u_1,u_2,u_3,u_4,u_5,u_6)\\
f_5(\alpha,z,u_1,u_2,u_3,u_4,u_5,u_6)\\
f_6(\alpha,z,u_1,u_2,u_3,u_4,u_5,u_6)\end{array}\right], \quad 
u = \left[\begin{array}{c}  u_1 \\
 u_2 \\
u_3 \\
u_4 \\
u_5\\
u_6\end{array}\right].
\end{equation}
For compatibility with the results derived in previous sections, we assume that \eqref{eq:ex-f} satisfies the following conditions:
\vs
\begin{enumerate}[label=($E_\arabic*$)]
\item\label{e1} For all $\alpha \in \br$, $z \in D$, $u = (u_1,u_2,u_3,u_4,u_5,u_6)^T \in \br^6$ and $\tau \in \bm O$
\begin{align*}
  \tau f(\alpha, z,u_1,u_2,u_3,u_4,u_5,u_6)=f(\alpha,z,u_{\tau(1)},u_{\tau(2)},u_{\tau(3)},u_{\tau(4)},u_{\tau(5)},u_{\tau(6)}); 
\end{align*}
\item\label{e2} For all $\alpha \in \br$, $z \in D$, $u = (u_1,u_2,u_3,u_4,u_5,u_6)^T \in \br^6$ and $e^{i \theta} \in O(2)$
\begin{align*} 
f(\alpha,e^{i\theta}z,u_1,u_2,u_3,u_4,u_5,u_6)=f(\alpha,z,u_1,u_2,u_3,u_4,u_5,u_6).
\end{align*}
\item\label{e3} For all $\alpha \in \br$, $z \in D$, $u = (u_1,u_2,u_3,u_4,u_5,u_6)^T \in \br^6$, 
\begin{align*}
f(\alpha,z,-u_1,-u_2,-u_3,-u_4,-u_5,-u_6)=-f(\alpha,z,u_1,u_2,u_3,u_4,u_5,u_6).
\end{align*}
\end{enumerate}
\vs
Here, $\bm O \leq S_6$ is used to denote the 
chiral symmetry group of the cube, whose action on $V$ is described by the set of orientation preserving permutations of the corresponding membranes. If we identify $S_4$ with $\bm O$ according to the rule
\begin{gather} 
\label{eq:octahedral_permutation}
(1,2)\;\;\; \leftrightarrow \;\; (1,4)(2,3)(5,6),\quad (1,2,3)\;\;\leftrightarrow \;\; (1,4,6)(3,5,2), \nonumber \\ (1,2)(3,4)\;\; \leftrightarrow \;\; (1,4)(2,3),\quad (1,2,3,4)\;\;\leftrightarrow \;\; (1,2,3,4), \nonumber
\end{gather}
then the faces of the cube (see Figure \ref{fig:6membranes}) are naturally permuted as follows
\begin{align} \label{def:S4-action}
\sigma(u_{1},u_{2},u_{3},u_{4},u_{5},u_6)^T :=
(u_{\sigma(1)},u_{\sigma(2)},u_{\sigma(3)},u_{\sigma(4)},u_{\sigma(5)},u_{\sigma(6)})^T,\quad \forall_{\sigma\in S_4\le S_6},\; u\in V.
\end{align}
In this way, $V$ is an orthogonal $S_4$-representation with respect to the action \eqref{def:S4-action}. The table of characters for $S_4$
\begin{table}[h]
\centering
\begin{tabular}{|c|ccccc|}
\hline
con. classes &$(1)$ & $(1,2)$&$(1,2)(3,4)$ & $(1,2,3)$&$(1,2,3,4)$\\ \hline $\chi_0$ &$1$ & $1$ & $1$ & $1$ &$1$\\
$\chi_1$ &$1$ & $-1$ & $1$ & $1$ &$-1$\\ $\chi_2$ &$2$ & $0$ & $2$ & $-1$ &$0$\\
$\chi_3$ &$3$ & $-1$ & $-1$ & $0$ &$1$\\
$\chi_4$ &$3$ & $1$ & $-1$ & $0$ &$-1$\\
\hline 
$\chi_V$ &$6$ & $0$ & $2$ & $0$ &$2$\\
\hline
\end{tabular}
\caption{Character Table for $S_4$.}
\end{table}
\vs
reveals the relation
 \[ \chi_V=\chi_0+\chi_2+\chi_3, \] 
which can be used to obtain the following $S_4\times \bz_2$-isotypic decomposition of $V$
\begin{align*}
V=\cV_0^-\oplus \cV^-_2\oplus \cV^-_3.
\end{align*}  
Under the assumptions \ref{e1}-\ref{e3}, the function $f$ clearly satisfies conditions \ref{c1}-\ref{c3}. Let us also assume that $f$ satisfies the conditions  \ref{c4} and \ref{c5} with the matrix $A(\alpha):= D_uf(0): V \rightarrow V$ given by
\begin{equation}\label{eq:Aa-ex}
 A(\alpha)= aI + \zeta(\alpha) C,
\end{equation}
where $I:V \rightarrow V$ is the identity matrix, $C:V \rightarrow V$ is the weighted adjacency matrix
\begin{equation}\label{eq:adjacency_matrix-ex}
C =     \left[
    \begin{array}
    [c]{cccccc}
     c &  d & 0 & d & d & d \\
     d &  c & d & 0 & d & d  \\
     0 &  d & c & d & d & d  \\
     d &  0 & d & c & d & d  \\
     d &  d & d & d & c & 0\\
     d &  d & d & d & 0 & c \\
    \end{array}
    \right], 
\end{equation}
defined for some fixed $c,d \in \br$ satisfying the conditions 
\begin{enumerate}[label=($E_\arabic*$),resume]
\item\label{e4} $c > 0$, $d < 0$, $4d+c\ge0$,
\end{enumerate}
and where $\zeta: \br \rightarrow \br$ is the {\it sigmoid function}
\begin{align} \label{eq:sigmoid}
    \zeta(\alpha) = \frac{1}{1 + e^{-\alpha}}.
\end{align}
\begin{figure}[h]
  \centering
  \begin{tikzpicture}[scale=1.5]
    \draw[->] (-3.5, 0) -- (3.5, 0) node[right] {$\alpha$};
    \draw[->] (0, -0.2) -- (0, 1.2);
    \draw[domain=-3:3, smooth, variable=\x, blue, thick] plot ({\x}, {1/(1+exp(-\x))}) node[right] {$\zeta(\alpha) = \frac{1}{1 + e^{-\alpha}}$};
    \draw[dashed, thick] (-3, 0.5) node[left] {$0.5$} -| (0, 0) node[below left] {$0$};
    \draw[dashed, thick] (-3, 1) node[left] {$1.0$} -| (0, 0) node[below left] {$0$};
  \end{tikzpicture}
  \caption{Graph of the sigmoid function $\zeta(\alpha)$.}
  \label{Fig-2}
\end{figure}
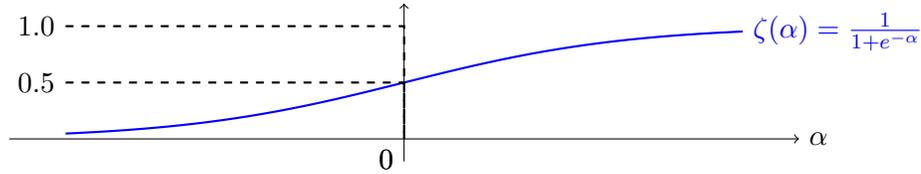
\begin{remark} \rm
The sigmoid function introduces a nonlinear dependence of the coupling in \eqref{eq:example_Lap}
on the bifurcation parameter $\alpha \in \br$. Moreover, the asymptotic behaviour of \eqref{eq:sigmoid} can be understood as the imposition of {\it coupling strength saturation limits,} which might reflect physical constraints such as, for example, bounded ranges for certain material densities.
\end{remark}
With the linearization \eqref{eq:Aa-ex}, system  \eqref{eq:example_Lap} clearly satisfies the condition \ref{c0_tilde}. Indeed, it can be shown that $A(\alpha):V \rightarrow V$ admits three eigenvalues with eigenspaces corresponding to the $S_4$-isotypic components as follows
\begin{align} \label{eq:eigen_values}
\mu_0(\alpha)&:=a+\zeta(\alpha) (c+4d), &E(\mu_0(\alpha))=\cV_0^-, \\
\mu_2(\alpha)&:=a+\zeta(\alpha) (c-2d), &E(\mu_2(\alpha))=\cV_2^-, \nonumber \\
\mu_3(\alpha)&:=a+\zeta(\alpha) c,  &E(\mu_3(\alpha))=\cV_3^-. \nonumber
\end{align}
Notice also that, if $c,d \in \br$ are such that $c+4d, c-2d, c \neq 0$, then the critical set associated with \eqref{eq:ex-f} is discrete. Together with the strict monotonicity of the eigenvalues \eqref{eq:eigen_values} and the distinctness of the numbers $s_{nm}$, it follows that each critical point of \eqref{eq:example_Lap} can be specified by a unique triple $(n,m,j) \in \Sigma$ with the notation
\begin{align} \label{eq:critical_point_id}
    \alpha_{n,m,j}:= \mu_j^{-1}(s_{nm}).
\end{align}
We are now in a position to apply the local equivariant bifurcation results derived in this paper, namely Theorem \ref{th:bounded} and Corollary \ref{th:cor_bounded}, to the example system \eqref{eq:example_Lap}.
\begin{proposition} \label{prop:example_local}
Under assumptions \ref{e1}--\ref{e4}, every critical point $\alpha_{n,m,j}$ of \eqref{eq:example_Lap} is a branching point for a branch of non-trivial solutions with symmetries at least $({}^{s}H)$ for some orbit type of maximal kind $(H) \in \mathfrak 
 M_{1,j}$ and some folding $s \geq m$.
\end{proposition}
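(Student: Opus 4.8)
The plan is to verify, at a given critical point $\alpha_0:=\alpha_{n,m,j}$, the hypotheses of Theorem~\ref{th:bounded} and Corollary~\ref{th:cor_bounded} for a suitably chosen orbit type $(H)\in\mathfrak M_{1,j}$. First I would record that $(\alpha_0,0)$ is an isolated critical point at which a single eigenvalue of $\mathscr A(\alpha)$ crosses zero: under \ref{e4} the coefficient of $\zeta$ in each eigenvalue $\mu_j$ (namely $c+4d$, $c-2d$, $c$ for $j=0,2,3$) is nonnegative, and strictly positive whenever $\alpha_{n,m,j}$ is actually defined, so $\mu_j$ is strictly increasing by strict monotonicity of the sigmoid; combined with the distinctness of the $s_{nm}$ and the injectivity of $(n,m,j)\mapsto\alpha_{n,m,j}$ noted before \eqref{eq:critical_point_id}, this shows that as $\alpha$ increases through $\alpha_0$ exactly one eigenvalue $\xi_{n',m',j'}(\alpha)=1-\mu_{j'}(\alpha)/s_{n'm'}$ changes sign, namely $\xi_{n,m,j}$, passing from positive to negative. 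Since $m_0=m_2=m_3=1$ are odd, the triple $(n,m,j)$ survives in the index set \eqref{index_2}, so on a small enough deleted neighbourhood $\alpha_0^-<\alpha_0<\alpha_0^+$ one has $\Sigma(\alpha_0^+)=\Sigma(\alpha_0^-)\cup\{(n,m,j)\}$, a disjoint union.

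Feeding this into Lemma~\ref{l4.1} gives $\omega_G(\alpha_0)=\rho\cdot\big((G)-\deg_{\mathcal V_{m,j}}\big)$ with $\rho:=\prod_{(n',m',j')\in\Sigma(\alpha_0^-)}\deg_{\mathcal V_{m',j'}}$. For any $(H)\in\mathfrak M_{1,j}$, Remark~\ref{rm4} shows the extra triple $(n,m,j)$ can contribute to $\Sigma^s(\alpha_0^\pm,H)$ only for $s=m$; hence $\mathfrak n^s(\alpha_0^-,H)=\mathfrak n^s(\alpha_0^+,H)$ and $\mathfrak i^s(\alpha_0,H)=0$ for every $s\neq m$, in particular for all $s>m$. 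Consequently $\mathfrak s(\alpha_0,H)=m$ as soon as $\mathfrak i^m(\alpha_0,H)\neq0$, and by Remark~\ref{rm3} this last condition is equivalent to choosing $(H)$ in $\mathfrak M_{1,j}$ with $\dim\mathcal V_{1,j}^H$ odd (then $\Sigma^m(\alpha_0^+,H)$ has exactly one more element than $\Sigma^m(\alpha_0^-,H)$, of opposite parity, so $\mathfrak i^m(\alpha_0,H)=\pm1$).

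The main obstacle is therefore the purely group-theoretic step of exhibiting, for each isotypic component $j\in\{0,2,3\}$ of $V=\br^6$ as an $S_4$-representation, a maximal orbit type $(H)\in\mathfrak M_{1,j}$ with $\dim(\mathcal W_1\otimes\mathcal V_j^-)^H$ odd. Here I would use that $\mathcal W_1^{D_1}\cong\br$ is one-dimensional, $D_1=\{1,\kappa\}\le O(2)$, to reduce the search to a maximal orbit type $(L)$ of $\mathcal V_j^-$ as a $\Gamma\times\bz_2$-representation with odd-dimensional fixed-point space whose amalgam with $D_1$ remains maximal in $\mathcal W_1\otimes\mathcal V_j^-$; the fixed-point space of the amalgam is then $\br\otimes(\mathcal V_j^-)^L$. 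For $\Gamma=S_4$ this is a finite lattice computation — conveniently carried out with the GAP package {\tt EquiDeg} (cf.~\cite{Pin}) — and in each of the three cases one finds such an orbit type, with one-dimensional fixed-point space; checking the maximality of the amalgamated subgroups inside $G$ is the step I expect to require the most care.

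With such $(H)$ in hand the hypotheses of Corollary~\ref{th:cor_bounded} are met with $\mathfrak s=\mathfrak s(\alpha_0,H)=m$ — note that \ref{c1}--\ref{c5} follow from \ref{e1}--\ref{e4}, and \ref{c0_tilde} from the form \eqref{eq:Aa-ex} of $A(\alpha)$ together with boundedness of the sigmoid \eqref{eq:sigmoid} — and the Corollary produces a branch $\mathscr C$ of non-radial solutions bifurcating from $(\alpha_0,0)$ with $(G_u)\ge({}^{m}H)$ for all $u\in\mathscr C$. In particular $\mathscr C$ has symmetries at least $({}^{s}H)$ with folding $s=m\ge m$, which is the assertion.
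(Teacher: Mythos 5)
Your proof is correct and follows essentially the same route as the paper's: both identify, from strict monotonicity of $\mu_j$ and the distinctness of the $s_{nm}$, that the symmetric difference of index sets across the critical value is the singleton $\{(n,m,j)\}$, both invoke Remark~\ref{rm4} to localise the only possible nontrivial contribution to $\mathfrak i^s(\alpha_0,H)$ at $s=m$ (hence $\mathfrak s(\alpha_0,H)=m$ and vanishing of $\mathfrak i^{m'}$ for $m'>m$), and both conclude via Corollary~\ref{th:cor_bounded} after choosing a maximal orbit type $(H)\in\mathfrak M_{1,j}$ with $\dim\mathcal V_{1,j}^H$ odd. The only cosmetic difference is that the paper states the choice of $(H)$ with odd fixed-point dimension without comment (verifying it only afterwards via the listed basic degrees), whereas you explicitly flag this as the remaining group-theoretic step and defer it to the {\tt EquiDeg} computation; your aside about reducing to amalgams with $D_1\le O(2)$ and the claim that the resulting fixed-point spaces are one-dimensional is more than what is needed and goes slightly beyond what the paper verifies (the paper only records that $\dim\mathcal V_{1,j}^H$ is odd), but this does not affect correctness.
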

\begin{proof}
Let $\alpha_{n,m,j}$ be a critical point of \eqref{eq:example_Lap} with a deleted regular neighborhood $\alpha_{n,m,j}^- < \alpha_{n,m,j} < \alpha_{n,m,j}^+$ on which $\mathscr A(\alpha)$ is an isomorphism and choose any orbit type of maximal kind $(H) \in \mathfrak M_{1,j}$ with $2 \nmid \dim \mathcal V_{1,j}^H$. The unique identification of $(n,m,j) \in \Sigma$ with $\alpha_{n,m,j}$ (cf. \eqref{eq:critical_point_id}) together with the strict monotonicity of the eigenvalues \eqref{eq:eigen_values} imply the set difference $\Sigma(\alpha_{n,m,j}^+) \setminus \Sigma(\alpha_{n,m,j}^-) = \{(n,m,j) \}$. Therefore, the numbers $\mathfrak n^s(\alpha_{n,m,j}^\pm,H)$ agree for all $s \neq m$ and are consecutive in the case of $s = m$ and the result follows from Corollary \eqref{th:cor_bounded}.
\end{proof}
\vs
In order to demonstrate how Theorem \ref{th:unbounded-K}, the main global equivariant bifurcation result of this paper, can be applied to a system of the form \eqref{eq:example_Lap}, we introduce the following simplifying assumption:
\begin{enumerate}[label=($E_\arabic*$),resume]
\item\label{e5} there exists an isotypic index $j \in \{0,2,3\}$, an odd number $m_0 \in 2 \bn - 1$ and a pair of numbers $n_l,n_u \in \bn$ with $n_l \leq n_u$ and $2 \mid n_u - n_l$ such that the Bessel root $s_{nm}$ lies:
\begin{enumerate}[label=(\roman*)]
    \item within the codomain of $\mu_j$ if $m = m_0$ and $n_l \leq n \leq n_u$;
    \item outside the codomain of $\mu_j$ if either, $m = m_0$ and $n < n_l$ or $n > n_u$, or if $m > m_0$ for any $n \in \bn$.
\end{enumerate}
\end{enumerate}
Condition \ref{e5} guarantees that an odd number of critical points $(\alpha,0) \in \Lambda^{\bm K}$ will obtain $\mathfrak s(\alpha,H) = \bar{\mathfrak s}(H)$ for every orbit type of maximal kind $(H) \in \mathfrak M_{1,j}$ with $2 \nmid \dim \mathcal V_{1,j}^H$.
\begin{proposition} \label{prop:example_global}
    Under assumptions \ref{e1}--\ref{e5}, the trivial solution of \eqref{eq:example_Lap} undergoes a global bifurcation of non-radial solutions with symmetries at least $({}^{s}H)$ at every critical point $\alpha_{n,m_0,j}$ with $n_l \leq n \leq n_u$ for every orbit type of maximal kind $(H) \in \mathfrak M_{1,j}$ with $2 \nmid \dim \mathcal V_{1,j}^H$.
\end{proposition}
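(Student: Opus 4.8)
The plan is to verify the hypotheses of Theorem~\ref{th:unbounded-K} for a suitable orbit type. Fix the isotypic index $j$, the odd integer $m_0$, and the bounds $n_l\le n_u$ supplied by \ref{e5}, and fix an orbit type of maximal kind $(H)\in\mathfrak M_{1,j}$ with $2\nmid\dim\mathcal V_{1,j}^H$ (if no such $(H)$ exists the assertion is vacuous). Everything reduces to proving the two identities $\bar{\mathfrak s}(H)=m_0$ and $\mathfrak J(H)=\{(\alpha_{n,m_0,j},0):n_l\le n\le n_u\}$: granting them, $|\mathfrak J(H)|=n_u-n_l+1$ is odd because $2\mid n_u-n_l$, so Theorem~\ref{th:unbounded-K} applies and produces, bifurcating from every point of $\mathfrak J(H)$, an unbounded branch of non-radial solutions with symmetries at least $({}^{\bar{\mathfrak s}}H)=({}^{m_0}H)$, which is precisely the assertion.

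First I would make $\Lambda^{\bm K}$ explicit. By \eqref{eq:eigen_values} each $\mu_{j'}$ is strictly increasing onto a bounded interval $(a,a+w_{j'})$ with widths $w_0=c+4d$, $w_2=c-2d$, $w_3=c$, so $0\le w_0\le w_3\le w_2$ by \ref{e4}; since $s_{nm}\to\infty$, the set $\Lambda^{\bm K}$ is finite and consists exactly of the points $(\alpha_{n,m,j'},0)$ with $m$ odd and $a<s_{nm}<a+w_{j'}$, each coded uniquely by its triple as in \eqref{eq:critical_point_id}. Exactly as in the proof of Proposition~\ref{prop:example_local}, the strict monotonicity of the $\mu_{j'}$ together with the distinctness of the Bessel roots yields, at every such critical point, the single-index jump $\Sigma^{\bm K}(\alpha_{n,m,j'}^+)=\Sigma^{\bm K}(\alpha_{n,m,j'}^-)\sqcup\{(n,m,j')\}$.

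Next I would compute the folding number at each critical point. Adjoining a single index to $\Sigma^{\bm K}$ changes each $\mathfrak n^s(\cdot,H)$ by at most $1$, and only when $\operatorname{coeff}^{{}^{s}H}(\deg_{\mathcal V_{m,j'}})\neq0$, so Remark~\ref{rm4} leaves $s=m$ as the only candidate folding; moreover, since an orbit type maximal in $\mathscr H_{1}\setminus\{0\}$ is also maximal in $\mathscr H_{1,j'}\setminus\{0\}$ whenever $\mathcal V_{1,j'}^H\neq0$, Remark~\ref{rm3} gives $\operatorname{coeff}^{{}^{m}H}(\deg_{\mathcal V_{m,j'}})\neq0\iff2\nmid\dim\mathcal V_{1,j'}^H$ (both sides being false when $\mathcal V_{1,j'}^H=0$). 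Hence $\mathfrak s(\alpha_{n,m,j'},H)=m$ precisely when $2\nmid\dim\mathcal V_{1,j'}^H$; otherwise the added index changes no $\Sigma^s(\cdot,H)$, so $\mathfrak i^s(\alpha_{n,m,j'},H)=0$ for all $s$ and the point is irrelevant to $(H)$.

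It remains to feed in \ref{e5}. I would invoke the representation-theoretic fact --- checkable by direct inspection of the $S_4\times\mathbb Z_2$-isotypic data, for instance with {\tt EquiDeg} --- that for the chosen $(H)\in\mathfrak M_{1,j}$ one has $2\mid\dim\mathcal V_{1,j'}^H$ for every $j'\neq j$, so that only the critical points of isotypic index $j$ are relevant to $(H)$. Among those, part (ii) of \ref{e5} forces every relevant odd Bessel index to satisfy $m\le m_0$, while part (i) supplies relevant critical points of Bessel index exactly $m_0$, namely $\alpha_{n,m_0,j}$ with $n_l\le n\le n_u$. This gives $\bar{\mathfrak s}(H)=m_0$ and $\mathfrak J(H)=\{(\alpha_{n,m_0,j},0):n_l\le n\le n_u\}$, completing the argument. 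I expect the main obstacle to be precisely this last input --- showing that maximal orbit types with odd fixed-point dimension localize to a single isotypic component, so that no folding larger than $m_0$ survives in the global bifurcation invariant; the remainder is bookkeeping built on Proposition~\ref{prop:example_local} and Theorem~\ref{th:unbounded-K}.
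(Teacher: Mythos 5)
Your proposal is correct and follows the same route as the paper: both reduce Proposition~\ref{prop:example_global} to verifying the hypothesis of Theorem~\ref{th:unbounded-K}, i.e.~that $\bar{\mathfrak s}(H)=m_0$ and $|\mathfrak J(H)|=n_u-n_l+1$ is odd, after which that theorem delivers the unbounded non-radial branch with symmetries at least $({}^{m_0}H)$. The paper's own proof is a two-line assertion of these facts; you supply the underlying bookkeeping (discreteness of $\Lambda^{\bm K}$, the single-index jump inherited from Proposition~\ref{prop:example_local}, Remarks~\ref{rm3}--\ref{rm4} to localize each jump to folding $s=m$), which is exactly the content the paper leaves implicit.

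The one ingredient you flag as a possible obstacle --- that $2\mid\dim\mathcal V_{1,j'}^H$ for every $j'\neq j$, so that no folding $m>m_0$ coming from another isotypic component can survive --- does not actually need a separate {\tt EquiDeg} verification; it is already forced by the disjointness of the sets $\mathfrak M_{1,0},\mathfrak M_{1,2},\mathfrak M_{1,3}$ computed in the example. Indeed, by the very argument you give (maximality of $(H)$ in $\Phi_0(G;\mathscr H_1\setminus\{0\})$ restricts to maximality in $\Phi_0(G;\mathscr H_{1,j'}\setminus\{0\})$ whenever $\mathcal V_{1,j'}^H\neq 0$), if $\mathcal V_{1,j'}^H\neq 0$ then $(H)\in\mathfrak M_{1,j'}\cap\mathfrak M_{1,j}$, contradicting disjointness; hence $\mathcal V_{1,j'}^H=0$ for all $j'\neq j$, and the needed vanishing of $\operatorname{coeff}^{{}^{m}H}(\deg_{\mathcal V_{m,j'}})$ follows. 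So your proof is complete once this observation replaces the appeal to direct computation.
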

\begin{proof}
    Choose $(H) \in \mathfrak M_{1,j}$ with $2 \nmid \dim \mathcal V_{1,j}^H$ and notice that, since $\bar {\mathfrak s}(H) = m_0$ and $\alpha_{n,m_0,j} \in \mathfrak J(H)$ for every $n_l \leq n \leq n_u$, one has $2 \nmid | \mathfrak J(H) |$ and the result follows from Theorem \ref{th:unbounded-K}.
\end{proof}
\vs
So that Propositions \ref{prop:example_local} and \ref{prop:example_global} can be verified empirically,
let us specify a particular bifurcation problem associated with system \eqref{eq:example_Lap} by choosing the following values for the constants
$a,c,d \in \br$ 
\begin{align}\label{def:constant_assignment}
    a = 32, \; c = 5, \; d = -1.
\end{align}
For convenience, we include graphs of the three eigenvalues 
\eqref{eq:eigen_values} with the assignments \eqref{def:constant_assignment} in Figure
\ref{fig:eigen_functions} and the numbers $s_{nm}$ for $m = 0, \ldots, 10$ and $n = 1, \ldots, 9$ in Table \ref{tab:bessel_zeros}.
\begin{table}[htbp] \label{tab:bessel_zeros}
\centering
\resizebox{0.95\columnwidth}{!}{
\scalebox{.67}{\tiny\begin{tabular}{ 
|p{0.6cm}||p{0.85cm}|p{0.85cm}|p{0.85cm}|p{0.85cm}|p{0.85cm}|p{0.85cm}|p{0.85cm}|p{0.85cm}|p{0.85cm}|}
\hline
  & $n=1$ & $n=2$ & $n=3$ & $n=4$ & $n=5$ & $n=6$ & $n=7$ & $n=8$ & $n=9$  \\ \hline 
m=0 & 5.783 & 30.471 & 74.887 & 139.04 &222.932 & 326.563 & 449.934 & 593.043 & 755.891 
\\ \hline
m=1 &14.682 & 49.218 & 103.499 & 177.521 & 271.282 & 384.782 & 518.021 & 671 & 843.718 \\ \hline
m=2 & 26.374 & 70.85 & 135.021 & 218.92 & 322.555 & 445.928 & 589.038 & 751.888 & 934.476  \\ \hline
m=3 & 40.706 & 95.278 & 169.395 & 263.201 & 376.625 & 509.98 & 662.968 & 835.693 & 1028.15 \\ \hline
m=4 & 57.583 & 122.428 & 206.57 &310.322 & 433.761 & 576.913 & 739.79 & 922.398 & 1124.74 \\ \hline
m=5 & 76.939 & 152.241 & 246.495 & 360.245 & 493.631 & 646.702 & 819.483 & 1011.99 & 1224.21 \\ \hline
m=6 & 98.726 & 184.67 & 289.13 & 412.934 & 556.303 & 719.321 & 902.024 & 1104.44 & 1326.56 \\ \hline
m=7 & 122.908 & 219.67 & 334.436&468.356 & 621.751 & 794.743 & 987.392 & 1199.73 & 1431.77  \\ \hline
m=8 & 149.453 & 257.21 & 382.38 & 526.481 & 689.946 & 872.946 & 1075.56 & 1297.84 & 1539.81   \\ \hline
m=9 & 178.337 & 297.26 & 432.933 & 587.281 & 760.863 & 953.907 & 1166.52 & 1398.77 & 1650.68 \\ \hline
m=10 & 209.54 & 339.793 & 486.07 & 650.732 & 834.48 & 1037.6 & 1260.24 & 1502.48 & 1764.35  \\ \hline
\end{tabular}}
}
\caption{Squared Zeros of Bessel functions $s_{nm}$ for $0 \le m\le 10$, $1\le n \le 9$.}
\end{table}
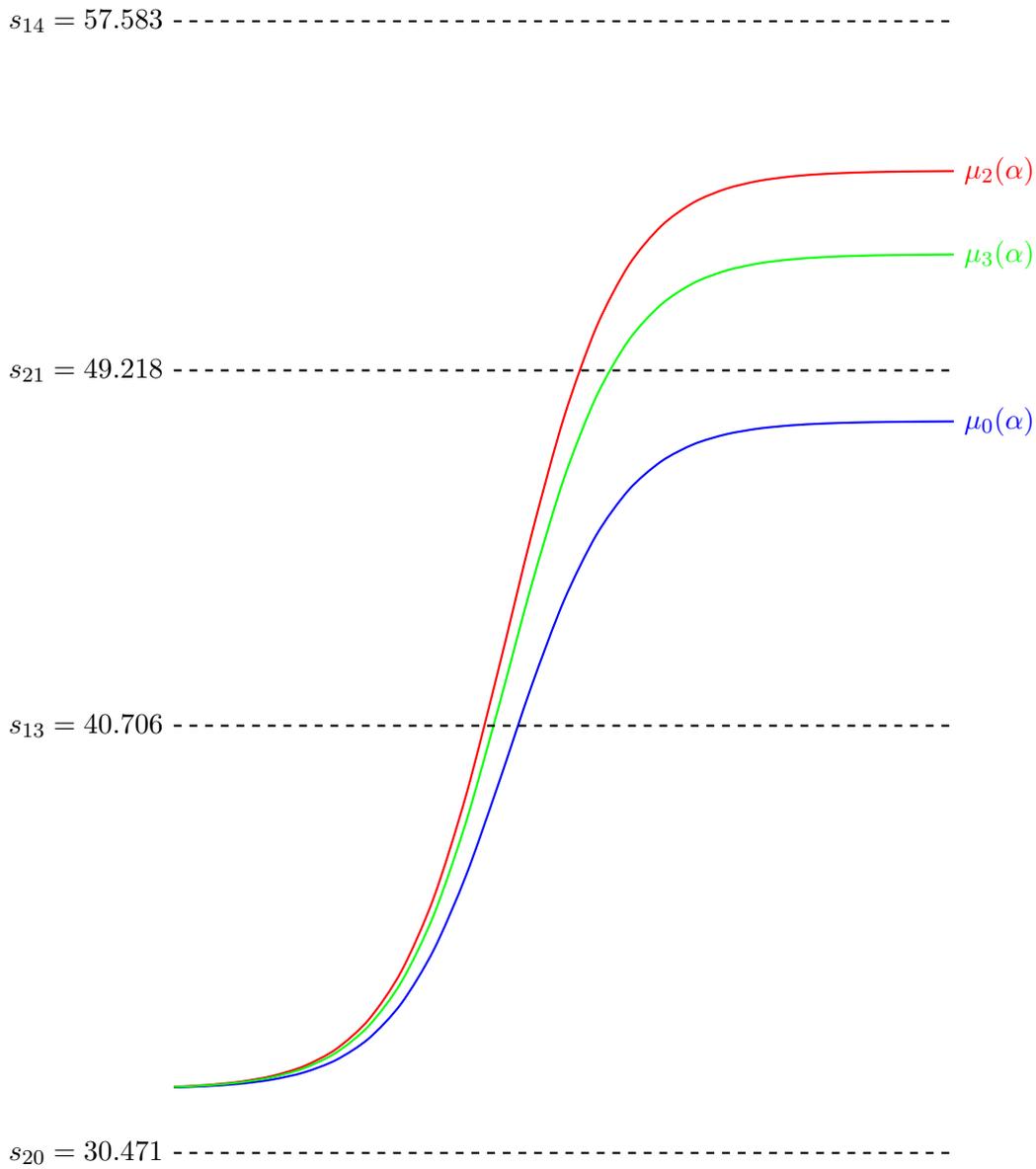
\begin{figure}[h]
  \centering
  \begin{tikzpicture}[scale=.75,yscale=.75]
    \draw[domain=-6:8, smooth, variable=\x, blue, thick] plot ({\x}, {32 + 16/(1+exp(-1*\x))}) node[right] {$\mu_0(\alpha)$};
    \draw[domain=-6:8, smooth, variable=\x, red, thick] plot ({\x}, {32 + 22/(1+exp(-1*\x))}) node[right] {$\mu_2(\alpha)$};
    \draw[domain=-6:8, smooth, variable=\x, green, thick] plot ({\x}, {32 + 20/(1+exp(-1*\x))}) node[right] {$\mu_3(\alpha)$};
    \draw[dashed, thick] (-6, 57.583) node[left] {$s_{14} = 57.583$} -| (8, 57.583) ;
    \draw[dashed, thick] (-6, 40.706) node[left] {$s_{13} = 40.706$} -| (8, 40.706) ;
    \draw[dashed, thick] (-6, 49.218) node[left] {$s_{21} = 49.218$} -| (8, 49.218) ;
    \draw[dashed, thick] (-6, 30.471) node[left] {$s_{20} = 30.471$} -| (8, 30.471) ;
  \end{tikzpicture}
  \caption{Graph of the eigenvalues $\mu_0(\alpha),\mu_2(\alpha)$ and $\mu_3(\alpha)$ and the consecutive numbers $s_{20} < s_{13} < s_{21} < s_{14}$.}
  \label{fig:eigen_functions}
\end{figure}
Notice that the critical set for \eqref{eq:example_Lap} consists of exactly five critical points that can be distinguished, using the notation \eqref{eq:critical_point_id}, as follows
\begin{align} \label{eq:ex_critical_set}
    \Lambda = \{ (\alpha_{1,3,0},0),(\alpha_{1,3,2},0),(\alpha_{1,3,3},0),(\alpha_{2,1,2},0),(\alpha_{2,1,3},0) \}.
\end{align}
In order to employ the results presented in previous sections to our bifurcation problem, we must first identify the maximal orbit types in $\Phi_0(G; \mathscr H_{1}\setminus \{0 \})$. The following G.A.P code can be used to generate a complete list of the sets $\mathfrak M_{1,j}$ and the corresponding Basic Degrees $\deg_{\mathcal V_{1,j}}$ for $j = 1,2,3$.
\begin{lstlisting}  [language=GAP, frame=single] 
# A G.A.P Program for the computation of Maximal Orbit Types and Basic Degrees associated with the G-isotypic decomposition   V = V_0 \times V_2 \times V_3
LoadPackage ("EquiDeg"); 
# create groups O(2)xS4xZ2
o2:=OrthogonalGroupOverReal(2);
s4:=SymmetricGroup(4);
z2:=pCyclicGroup(2);
# generate S4xZ2
g1:=DirectProduct(s4,z2);
# set names for subgroup conjugacy classes in S4xZ2
SetCCSsAbbrv(g1, [ "Z1", "Z2", "D1z","D1","Z2m", "Z1p", 
"Z3", "V4", "D2z", "Z4", "D2", "D1p","D2d", 
"V4m", "D2p", "Z4d", "D3", "D3z", "Z3p",
"V4p", "D4z", "D4d", "Z4p", "D4", "D4z",
 "D4hd", "D3p", "A4", "D4p", "A4p",  "S4", "S4m", "S4p"]);
# generate O(2)xS4xZ2
G:=DirectProduct(o2,g1);
ccs:=ConjugacyClassesSubgroups(G);
# find Maximal Orbit Types and Basic Degrees in first O(2)-isotypic component
irrs := Irr(G); 
# The G-istoypic components 0,2,3 are indexed in G.A.P as 3,5,7
for i in [ 3,5,7 ] do
    max_orbtyps := MaximalOrbitTypes( irrs[1,i] );
    basic_deg := BasicDegree( irrs[1,i] );
    PrintFormatted( "\n Basic Degree associated with irrep V_{1,j} where j= \n", i );
    View(basic_deg);
    PrintFormatted( "\n Maximal Orbit Types in M_1,{} \n", i );
    View(max_orbtyps);
od;
Print( "Done!\n" );
\end{lstlisting}
For any $m > 0$ and $j \in \{0,2,3\}$, the output of the above program can be used to describe the set of maximal orbit types $\mathfrak M_{m,j} \subset \Phi_0(G)$ the basic degrees $\deg_{\mathcal V_{m,j}} \in A(G)$ (cf. \eqref{sf2} and \eqref{sf3}), using amalgamated notation (cf. Appendix \ref{sec:amalgamated_notation}), as follows:
\begin{align*}
\mathfrak M_{m,0}=&\Big\{(D_{2m}^{D_m}\times^{S_4}S_4^p) \Big\},\\
\mathfrak M_{m,2}=&\Big\{
(D_{6m}^{\bz_m}\times ^{V_4}S_4^p),(D_{2m}^{D_m}\times ^{D_4}D_4^p), (D_{2m}^{D_m}\times ^{D_4^{ \hat{d}}}D_4^p)\Big\},\\
\mathfrak M_{m,3}=&\Big\{(D_{2m}^{D_m}\times ^{D_4^z}D_4^p),
(D_{2m}^{D_m}\times ^{D_3^z}D_3^p),(D_{2m}^{D_m}\times ^{D_2^d}D_4^z),(D_{4m}^{\bz_m}\times^{\bz_2^-}D_4^p),
(D_{6m}^{\bz_m}\times D_3^p)\Big\}, 
\end{align*}
\begin{align*}
\deg_{\cV_{m,0}}&=(G)-(D_{2m}^{D_m}\times^{S_4}S_4^p),\\
\deg_{\cV_{m,2}}&=(G)-2(D_{6m}^{\bz_m}\times ^{V_4}S_4^p)-(D_{2m}^{D_m}\times ^{D_4}D_4^p)- (D_{2m}^{D_m}\times ^{D_4^{ \hat{d}}}D_4^p)\\
&+(D_{2m}^{D_m}\times ^{V_4}V_4^p)+4(D_{2m}^{\bz_m}\times ^{V_4}D_4^p),\\
\deg_{\cV_{m,3}}&=(G)-(D_{2m}^{D_m}\times ^{D_4^z}D_4^p)
-(D_{2m}^{D_m}\times ^{D_3^z}D_3^p)-(D_{2m}^{D_m}\times^{D_2^d}D_4^z)\\
&-2(D_{4m}^{\bz_m}\times^{\bz_2^-}D_4^p)-2(D_{6m}^{\bz_m}\times D_3^p)+(D_{2m}^{\bz_m}\times^{\bz_2^-}D_2^p)\\
&+2(D_{2m}^{\bz_m}\times ^{\bz_2^-}V_4^p)+2(D_{2m}^{\bz_m}\times ^{\bz_2^-}D_4^z)+2(D_{2m}^{\bz_m}\times ^{D_1^z}D_4^z)-(D_{2m}^{\bz_m}\times \bz_1^p)-2(D_{2m}^{\bz_m}\times D_2^p).
\end{align*}
We can deduce, from the coefficients of the Basic Degrees $\deg_{\mathcal V_{1,j}} \in A(G)$ (cf. Remark \ref{rm3}), that the fixed point space $\mathcal V_{1,j}^H$ has odd dimension for any $j \in \{0,2,3 \}$ and $(H) \in \mathfrak M_{1,j}$.
Since the critical set \eqref{eq:ex_critical_set} is manageably small, we also can manually compute the cardinalities \eqref{card_1} for each of the critical points $(\alpha_{n,m,j},0) \in \Lambda$ to obtain the quantities
\begin{align*}
    \begin{cases}
       \mathfrak s(\alpha_{1,3,0},H) = 3 \quad & \text{ if } (H) \in \mathfrak M_{1,0}; \\
         \mathfrak s(\alpha_{1,3,2},H) = 3, \; \mathfrak s(\alpha_{2,1,2},H)=1 \quad & \text{ if } (H) \in \mathfrak M_{1,2}; \\
          \mathfrak s(\alpha_{1,3,3},H) = 3, \; \mathfrak s(\alpha_{2,1,3},H)=1 \quad & \text{ if } (H) \in \mathfrak M_{1,3},
    \end{cases}
\end{align*}
and
\begin{align*}
    \begin{cases}
        \mathfrak i^3(\alpha_{1,3,0},H) = 1 \quad & \text{ if } (H) \in \mathfrak M_{1,0}; \\
         \mathfrak i^3(\alpha_{1,3,2},H) = i^1(\alpha_{2,1,2},H) = 1 \quad & \text{ if } (H) \in \mathfrak M_{1,2}; \\
         \mathfrak i^3(\alpha_{1,3,3},H) = i^1(\alpha_{2,1,3},H) = -1 \quad & \text{ if } (H) \in \mathfrak M_{1,3}.
    \end{cases}
\end{align*}
Therefore, from Theorem \ref{th:bounded}, it follows that every critical point $(\alpha_{n,m,j},0) \in \Lambda$ is a branching point for branches of non-trivial solutions to the problem \eqref{eq:example_Lap} with symmetries at least $({}^{3}H)$ in the case of $(\alpha_{1,3,2},0),(\alpha_{1,3,3},0),(\alpha_{1,3,0},0) \in \Lambda$ and with symmetries at least $(H)$ in the case of $(\alpha_{2,1,2},0),(\alpha_{2,1,3},0) \in \Lambda$ for each orbit type of maximal kind $(H) \in \mathfrak M_{1,j}$. The following G.A.P code can be used to computationally verify the non-triviality of the local bifurcation invariant (specifically, the non-triviality of the relevant coefficients) at each of the critical points in terms of the basic degrees 
\[
\deg_{\mathcal V_{3,2}}, \deg_{\mathcal V_{3,3}},\deg_{\mathcal V_{3,0}}, \deg_{\mathcal V_{1,2}}, \deg_{\mathcal V_{1,3}} \in A(G),
\]
and the unit Burnside Ring element $(G)$ according to the rule derived in Lemma \eqref{l4.1}
\begin{align*}
    \omega_G(\alpha_{1,3,2}) &= (G) - \deg_{\mathcal V_{3,2}},\\
    \omega_G(\alpha_{1,3,3}) &= \deg_{\mathcal V_{3,2}} - \deg_{\mathcal V_{3,2}} \cdot \deg_{\mathcal V_{3,3}}, \\
    \omega_G(\alpha_{1,3,0}) &= \deg_{\mathcal V_{3,2}} \cdot \deg_{\mathcal V_{3,3}} -  \deg_{\mathcal V_{3,2}} \cdot \deg_{\mathcal V_{3,3}} \cdot \deg_{\mathcal V_{3,0}},\\
    \omega_G(\alpha_{2,1,2}) &= \deg_{\mathcal V_{3,2}} \cdot \deg_{\mathcal V_{3,3}} \cdot \deg_{\mathcal V_{3,0}} - \deg_{\mathcal V_{3,2}} \cdot \deg_{\mathcal V_{3,3}} \cdot \deg_{\mathcal V_{3,0}} \cdot \deg_{\mathcal V_{1,2}},\\
\omega_G(\alpha_{2,1,3}) &= \deg_{\mathcal V_{3,2}} \cdot \deg_{\mathcal V_{3,3}} \cdot \deg_{\mathcal V_{3,0}} \cdot \deg_{\mathcal V_{1,2}} - \deg_{\mathcal V_{3,2}} \cdot \deg_{\mathcal V_{3,3}} \cdot \deg_{\mathcal V_{3,0}} \cdot \deg_{\mathcal V_{1,2}} \cdot \deg_{\mathcal V_{1,3}}.
\end{align*}
\begin{lstlisting}  [language=GAP, frame=single,belowskip=0pt] 
# A G.A.P Program for the Burnside Ring product  of Basic Degrees to be used for the verification of non-triviality of local bifurcation invariants
# Initialize the Burnside Ring A(G), with unit element U =(G)
AG:=BurnsideRing(G); H:=Basis(AG); U:=H[0,131];
# Initialize the relevant Basic Degrees
d_30 := BasicDegree(Irr(G)[3,3]);
d_32 := BasicDegree(Irr(G)[3,5]);
d_33 := BasicDegree(Irr(G)[3,7]);
d_12 := BasicDegree(Irr(G)[1,5]);
d_13 := BasicDegree(Irr(G)[1,7]);
# Compute Products of Basic Degrees Cumulatively
p_1 := d_32; p_2 := p_1*d_33; p_3 := p_2*d_30; 
p_4 := p_3*d_12; p_5 := p_4*d_13;
# Compute the local bifurcation invariants w_{n,m,j}
w_132 := U - p_1; w_133 := p_1 - p_2; w_130 := p_2 - p_3; 
w_212 := p_3 - p_4; w_213 := p_4 - p_5;
# Sum the local bifurcation invariants
sum := w_132 + w_133 + w_130 + w_212 + w_213
\end{lstlisting}
The output of the above program can be expressed using amalgamated notation as follows: 
\begin{align*}
\omega_G(\alpha_{1,3,2}) &= -4(D_6^{\bz_3} \times^{V_4}D_{4}^p) - (D_6^{D_3} \times^{V_4}V_{4}^p) + (D_6^{D_3} \times^{D_4}D_{4}^p) + (D_6^{D_3} \times^{D_4^{\hat{d}}}D_{4}^p) \\ & \quad + 2(D_{18}^{\bz_3} \times^{V_4}S_{4}^p), \\
    \omega_G(\alpha_{1,3,3}) &= 4(D_6^{\bz_3} \times^{\bz_1}D_{1}^p) + 2(D_6^{\bz_3} \times^{\bz_1}D_{2}^p) + (D_6^{D_3} \times^{\bz_1}\bz_{1}^p)  -2(D_6^{\bz_3} \times^{D_1}D_{4}^z) \\ & \quad - 2(D_6^{\bz_3} \times^{\bz_2^-}D_{4}^z) - 2(D_6^{\bz_3} \times^{\bz_2^-}V_{4}^p) + 2(D_6^{\bz_3} \times^{\bz_2}\bz_{4}^p) + 2(D_6^{\bz_3} \times^{\bz_2}D_{4}^z) \\ & \quad +2(D_6^{\bz_3} \times^{\bz_2}V_{4}^p) - (D_6^{D_3} \times^{D_1}D_1^p)
    -(D_6^{D_3} \times^{D_1^z}D_{1}^p) - (D_6^{D_3} \times^{\bz_2^-}D_{2}^p) \\ & \quad + (D_6^{D_3} \times^{\bz_2}D_2^p) - 2(D_{18}^{\bz_3} \times^{\bz_1}D_3^p) + 2(D_{12}^{\bz_3} \times^{\bz_2^-}D_4^p)
    - 2(D_{6}^{\bz_3} \times^{\bz_4}D_4^p) \\ & \quad - 2(D_{6}^{\bz_3} \times^{D_2^z}D_4^p) + (D_{6}^{D_3} \times^{D_2^d}D_4^z) - (D_{6}^{D_3} \times^{\bz_4}\bz_4^p) -(D_{6}^{D_3} \times^{D_2^z}D_4^z) 
    \\ & \quad
     + (D_{6}^{D_3} \times^{D_3^z}D_3^p) + (D_{6}^{D_3} \times^{D_4^z}D_4^p), \\
    \omega_G(\alpha_{1,3,0}) &= 2(D_{6}^{\bz_3} \times^{D_1}D_4^z) - 2(D_{6}^{\bz_3} \times^{\bz_2}\bz_4^p) - 2(D_{6}^{\bz_3} \times^{\bz_2}D_4^z) - 2(D_{6}^{\bz_3} \times^{\bz_2}V_4^p) \\ & \quad + (D_{6}^{D_3} \times^{D_1}D_1^p)
    - (D_{6}^{D_3} \times^{\bz_2}D_2^p) 
    - 2(D_{6}^{\bz_3} \times^{\bz_3}D_3^p) - (D_{6}^{D_3} \times^{\bz_3}\bz_3^p) 
    \\ & \quad + 2(D_{6}^{\bz_3} \times^{\bz_4}D_4^p) + 3(D_{6}^{\bz_3} \times^{V_4}D_4^p) 
    + (D_{6}^{D_3} \times^{\bz_4}\bz_4^p) + (D_{6}^{D_3} \times^{V_4}V_4^p) \\ & \quad - 2(D_{6}^{D_3} \times^{D_4}D_4^p) + (D_{6}^{D_3} \times^{S_4}S_4^p)\\
\omega_G(\alpha_{2,1,2}) &=  2(D_2^{\bz_1} \times^{\bz_1}D_2^p) + (D_2^{D_1} \times^{\bz_1}\bz_1^p) - 2 (D_2^{\bz_1} \times^{D_1}D_4^z)
-2 (D_2^{\bz_1} \times^{D_1^z}D_4^z) \\ & \quad 
- (D_2^{D_1} \times^{D_1}D_1^p)
- (D_2^{D_1} \times^{D_1^z}D_1^p)
+ 4 (D_6^{\bz_1} \times^{\bz_1}D_3^p)
- 2(D_2^{\bz_1} \times^{\bz_4}D_4^p)
\\ & \quad 
+ 2(D_2^{\bz_1} \times^{D_2^z}D_4^p)
+ 2(D_2^{\bz_1} \times^{V_4}D_4^p)
- (D_2^{D_1} \times^{\bz_4}\bz_4^p)
+ (D_2^{D_1} \times^{D_2^z}D_4^z) \\ & \quad 
+ (D_2^{D_1} \times^{D_4}D_4^p)
- (D_2^{D_1} \times^{D_4^{\hat{d}}}D_4^p)
- 2(D_6^{\bz_1} \times^{V_4}S_4^p)
\\
\omega_G(\alpha_{2,1,3}) &= - 4(D_2^{\bz_1} \times^{\bz_1}D_1^p)
- 4(D_2^{\bz_1} \times^{\bz_1}D_2^p)
- 2(D_2^{D_1} \times^{\bz_1}\bz_1^p)
+ 2(D_2^{\bz_1} \times^{D_1}D_4^z) \\ & \quad 
+ 2(D_2^{\bz_1} \times^{D_1^z}D_4^z)
+ 2(D_2^{\bz_1} \times^{\bz_2^-}D_4^z)
+ 2(D_2^{\bz_1} \times^{\bz_2^-}V_4^p)
+ (D_2^{D_1} \times^{D_1}D_1^p) \\ & \quad 
+ 2(D_2^{D_1} \times^{D_1^z}D_1^p)
+ (D_2^{D_1} \times^{\bz_2^-}D_2^p)
- 2(D_6^{\bz_1} \times^{\bz_1}D_3^p)
+ 2(D_2^{\bz_1} \times^{\bz_3}D_3^p) \\ & \quad 
+ (D_2^{D_1} \times^{\bz_3}\bz_3^p)
- 2(D_4^{\bz_1} \times^{\bz_2^-}D_4^p)
+ 2(D_2^{\bz_1} \times^{\bz_4}D_4^p) 
- (D_2^{D_1} \times^{D_2^d}D_4^z) \\ & \quad 
+ (D_2^{D_1} \times^{\bz_4}\bz_4^p)
+ (D_2^{D_1} \times^{D_3^z}D_3^p)
- (D_2^{D_1} \times^{D_4^z}D_4^p).
\end{align*}
Next, we can determine the global behaviour of some of the branches predicted using Theorem \eqref{th:bounded} by establishing the parity of the set \eqref{def:set_J}. Notice that, for every $j \in \{0,2,3\}$ and $(H) \in \mathfrak M_{1,j}$, the quantity $\bar {\mathfrak s}(H) = 3$ is obtained for exactly one critical point $(\alpha_{1,3,j},0)$, i.e.
\begin{align*}
     \mathfrak J(H) =
    \begin{cases}
        \{ (\alpha_{1,3,0},0) \} \quad & \text{ if } (H) \in \mathfrak M_{1,0}; \\
         \{ (\alpha_{1,3,2},0) \} \quad & \text{ if } (H) \in \mathfrak M_{1,2}; \\
         \{ (\alpha_{1,3,3},0) \} \quad & \text{ if } (H) \in \mathfrak M_{1,3}.
    \end{cases}
\end{align*}
Therefore,
any branch of non-trivial solutions with symmetries at least $({}^{3}H)$ bifurcating from $(\alpha_{1,3,j},0)$, for every $j \in \{0,2,3\}$ and $(H) \in \mathfrak M_{1,j}$, consists only of non-radial solutions and is unbounded. Of course, since the critical set \eqref{eq:ex_critical_set} is finite, one needs only sum the relevant local bifurcation invariants to arrive at the same conclusion directly from the Rabinowitz alternative (cf. Theorems \ref{th:Rabinowitz-alt-K} and \ref{th:unbounded-K}).
\begin{align*}
\sum_{(\alpha_0,0) \in \Lambda} \omega_G(\alpha_0) & = \omega_G(\alpha_{1,3,2}) + \omega_G(\alpha_{1,3,3}) + \omega_G(\alpha_{1,3,0}) + \omega_G(\alpha_{2,1,2}) + \omega_G(\alpha_{2,1,3}) 
\\ & = - 4(D_2^{\bz_1} \times^{\bz_1}D_1^p) - 2(D_2^{\bz_1} \times^{\bz_1}D_2^p)
- (D_2^{D_1} \times^{\bz_1}\bz_1^p)
+ 2(D_2^{\bz_1} 
\times^{\bz_2^-}D_4^z)
\\ & \quad 
+ 2(D_2^{\bz_1} \times^{\bz_2^-}V_4^p)
+ (D_2^{D_1} \times^{D_1^z}D_1^p)
+ (D_2^{D_1} \times^{\bz_2^-}D_2^p)
+ 2(D_6^{\bz_1} \times^{\bz_1}D_3^p)
\\ & \quad 
+ 2(D_2^{\bz_1} \times^{\bz_3}D_3^p)
+ (D_2^{D_1} \times^{\bz_3}\bz_3^p)
- 2(D_4^{\bz_1} \times^{\bz_2^-}D_4^p)
+ 2(D_2^{\bz_1} \times^{D_2^z}D_4^p)
\\ & \quad 
+ 2(D_2^{\bz_1} \times^{V_4}D_4^p)
- (D_2^{D_1} \times^{D_2^d}D_4^z)
+ (D_2^{D_1} \times^{D_2^z}D_4^z)
- (D_2^{D_1} \times^{D_3^z}D_3^p)
\\ & \quad 
- (D_2^{D_1} \times^{D_4^z}D_4^p)
+ (D_2^{D_1} \times^{D_4}D_4^p)
- (D_2^{D_1} \times^{D_4^{\hat{d}}}D_4^p)
- 2(D_6^{\bz_1} \times^{V_4}S_4^p)
\\ & \quad 
+ 4(D_6^{\bz_3} \times^{\bz_1}D_1^p)
+ 2(D_6^{\bz_3} \times^{\bz_1}D_2^p)
+ (D_6^{D_3} \times^{\bz_1}\bz_1^p)- 2(D_6^{\bz_3} \times^{\bz_2^-}D_4^z)
\\ & \quad 
- 2(D_6^{\bz_3} \times^{\bz_2^-}V_4^p)
- (D_6^{D_3} \times^{D_1^z}D_1^p)
- (D_6^{D_3} \times^{\bz_2^-}D_2^p)
- 2(D_18^{\bz_3} \times^{\bz_1}D_3^p)
\\ & \quad 
- 2(D_6^{\bz_3} \times^{\bz_3}D_3^p)
- (D_6^{D_3} \times^{\bz_3}\bz_3^p)
+ 2(D_12^{\bz_3} \times^{\bz_2^-}D_4^p)
-2(D_6^{\bz_3} \times^{D_2^z}D_4^p)
\\ & \quad 
- 2(D_6^{\bz_3} \times^{V_4}D_4^p)
+ (D_6^{D_3} \times^{D_2^d}D_4^z)
- (D_6^{D_3} \times^{D_2^z}D_4^z)
+ (D_6^{D_3} \times^{D_3^z}D_3^p)
\\ & \quad 
+ (D_6^{D_3} \times^{D_4^z}D_4^p)
- (D_6^{D_3} \times^{D_4}D_4^p)
+ (D_6^{D_3} \times^{D_4^{\hat{d}}}D_4^p)
+ 2(D_{18}^{\bz_3} \times^{V_4}S_4^p)
\\ & \quad 
+ (D_6^{D_3} \times^{S_4}S_4^p).
\end{align*}
In closing, let's consider a closer interpretation of the above results. Take the maximal orbit type $(H)= (D_{2}^{D_1}\times ^{D_4}D_4^p) \in \mathfrak M_{1,2}$ and let $\mathcal C$ be a branch of non-radial solutions with symmetries at least $({}^{3}H) = (D_{6}^{D_3}\times ^{D_4}D_4^p) \in \mathfrak M_{3,2}$ emerging from the first critical point $(\alpha_{1,3,2},0) \in \Lambda$. Notice that the kernel of $\mathscr A(\alpha): \mathscr H \rightarrow \mathscr H$ at any critical point $(\alpha_{n,m,j},0) \in \Lambda$ is given by the corresponding irreducible $G$-representation $\mathscr E_{n,m}^j$ (cf. \eqref{eq:enmj}), i.e. 
\[
\ker\mathscr A(\alpha_{1,3,2}) = \mathscr E_{1,3}^2
\]
such that any vector $\hat{u} \in \mathscr H$ with $\mathscr A(\alpha_{1,3,2}) \hat{u} = 0$ must be of the form
\[
\hat{u}(r,\theta) \in \left\{J_3(\sqrt{s_{13}}r)\Big(\cos(3\theta)\vec a+\sin(3\theta)\vec b\Big): \vec a,\, \vec b\in \text{span}\{\vec w_1, \vec w_2\} \right\}, 
\]
where $\vec w_1, \vec w_2 \in \mathcal V_2^-$ are the eigenvectors 
\[
\vec w_1 = (-1,1,-1,1,0,0)^T \text{ and } \vec w_2 = (1,0,1,0,-1,-1)^T.
\]
Computing the generators of the orbit type $(D_{6}^{D_3}\times ^{D_4}D_4^p)$,
\begin{align*}
\left(\frac{2\pi}{3}, (1), 1\right), \; \left(\kappa, (1), 1\right), \; \left(e_{SO(2)}, (1,2,3,4), 1\right), \; \left(e_{SO(2)}, (1,2), 1\right), \; \left(\frac{\pi}{3}, (1), -1\right) \in H
\end{align*}
and examining their action (cf. \eqref{def:isometric_action}, \eqref{def:S4-action}) on $\mathscr E_{1,3}^2$, we observe the radial invariances
\[
\hat u(r,\theta + \frac{2 \pi}{3}) = \hat u(r,\theta) \text{ and} - \hat u(r,\theta + \frac{\pi}{3}) = \hat u(r,\theta),
\]
which are satisfied trivially for all elements of $\mathscr E_{1,2}^3$, the conjugate invariance
\[
\hat u(r,-\theta) = \hat u(r,\theta),
\]
which is only satisfied when $\vec b = \vec 0$, and also the permutation invariances
\[
(1,2,3,4) \cdot \hat u(r,\theta) = \hat u(r,\theta) \text{ and } (1,2) \cdot \hat u(r,\theta) = \hat u(r,\theta),
\]
which, for $\vec a = a_1 \vec w_1 + a_2 \vec w_2$, is satisfied if and only if $a_2 = 2a_1$. Moreover, since one has
$\dim \ker\mathscr A(\alpha_0)^H = 1$, the celebrated theorem of Crandall-Rabinowitz (cf. \cite{CranR}) guarantees that the 
branch $\mathcal C$ is tangent to the solution $\hat{u}(r,\theta) = J_3(\sqrt{s_{13}}r)\cos(3\theta)(\vec w_1 + 2 \vec w_2)$ at the point $(\alpha_{1,3,2},0)$. Therefore, $\hat{u} \in \mathscr H$ can reasonably be used to estimate the behaviour of the branch $\mathcal C$ for parameter values near $\alpha_{1,3,2} \in \br$.
\begin{figure}[htbp]
    \centering
    \includegraphics[width=1\textwidth]{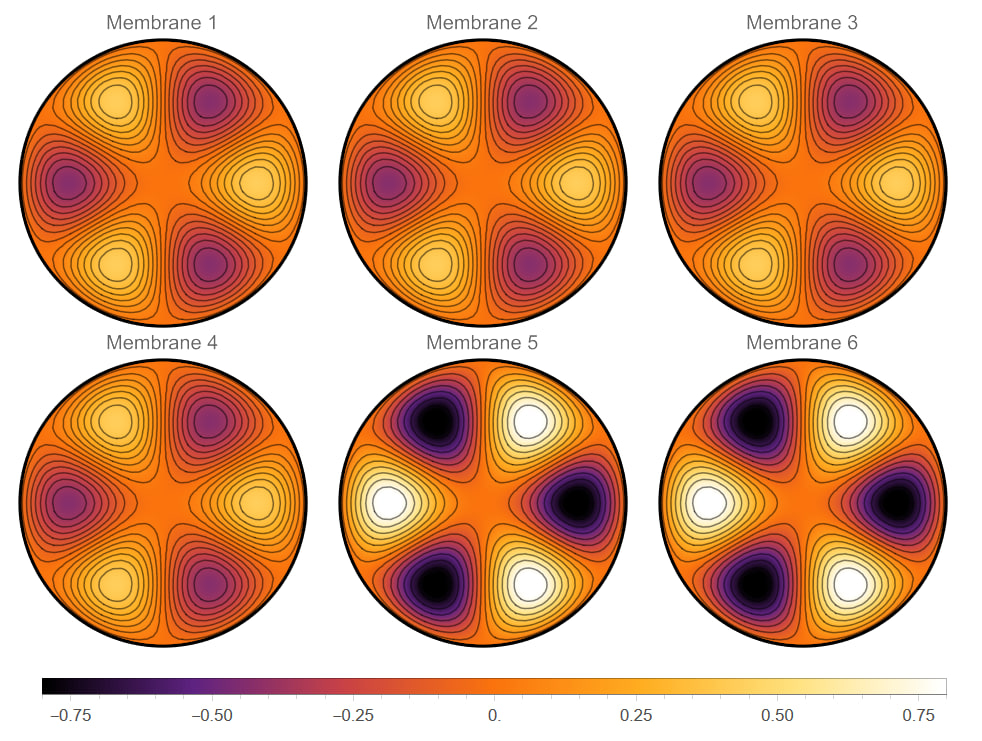}
    \caption{Graph of the map $\hat{u}(r,\theta) = J_3(\sqrt{s_{13}}r)\cos(3\theta)(\vec w_1 + 2 \vec w_2)$.}
    \label{fig:ker}
\end{figure}
\appendix
\section{Amalgamated Notation of Subgroups} \label{sec:amalgamated_notation}
The convention of \textit{amalgamated notation} is a shorthand for the specification of subgroups and their conjugacy classes in a product group $G_1 \times G_2$. In order to introduce this convention, we recall a well-known consequence of Goursat's Lemma (cf. \cite{Goursat}). Namely that, for any closed subgroup in $H \leq G_1 \times G_2$, there are two subgroups $K_1 \leq G_1$, $K_2 \leq G_2$, a group $L$ and a pair of epimorphisms $\varphi_2: K_1 \rightarrow L$, $\varphi_2: K_2 \rightarrow L$ such that
\begin{align} \label{not_amal_1}
H = \{ (x,y) \in K_1 \times K_2 \; : \; \varphi_1(x) = \varphi_2(y) \}.
\end{align}
Using amalgamated notation, the subgroup  \eqref{not_amal_1} is identified, up to its conjugacy class in $\Phi_0(G_1 \times G_2)$,  by the quintuple $(K_1, K_2, L, Z_1, Z_2)$, where $Z_1:= \ker \varphi_1$ and $Z_2:= \ker \varphi_2$, as follows
\begin{align*}
(H) = 
(K_1 {}^{Z_1}\times^{Z_2}_{L} K_2).
\end{align*}
In particular, one can always choose the group $L$ such that $L \simeq K_1/Z_1$, in which case the conjugacy class of \eqref{not_amal_1} is identified with the quadruple $(K_1,K_1,Z_1,Z_2)$ as follows
\begin{align}
\label{not_amal_3} 
(H) = 
(K_1 {}^{Z_1}\times^{Z_2} K_2).
\end{align}
This {\it compact amalgamated decomposition} \eqref{not_amal_3} is the form with which subgroup conjugacy classes are identified in this paper. 
\vs
In addition to the notation defined above, the following list provides some ancillary shorthand used in this paper for the identification of subgroups in $S_4 \times \bz_2$ 
\begin{align*}
\bz_2^-&=\{((1),1),((12)(34),-1)\},\\
\bz_4^-&=\{((1),1),((1324),-1),((12)(34),1),((1423),-1)\},\\
D_1^z&=\{((1),1),((12),-1)\},\\
V_4^-&=\{((1),1),((12)(34),1),((13)(24),-1),((14)(23),-1)\},\\
D_2^d&=\{((1),1),((12)(34),-1),((12),1),((34),-1)\},\\
D_2^z&=\{((1),1),((12)(34),1),((12),-1),((34),-1)\},\\
D_3^z&=\{((1),1),((123),1),((132),1),((12),-1),((23),-1),((13),-1)\}, \\
D_4^d&=\{((1),1),((1324),-1),((12)(34),1),((1423),-1),((34),1),\\
&~~~~~((14)(23),-1),((12),1),((13)(24),-1)\},\\
D_4^{\hat d}&=\{((1),1),((1324),-1),((12)(34),1),((1423),-1),((34),-1),\\
&~~~~~((14)(23),1),((12),-1),((13)(24),1)\},\\
D_4^z&=\{((1),1),((1324),1),((12)(34),1),((1423),1),((34),-1),\\
&~~~~~((14)(23),-1),((12),-1),((13)(24),-1)\},\\
S_4^-&=\{((1),1),((12),-1),((12)(34),1),((123),1),((1234),-1),((13),-1),\\
&~~~~~((13)(24),1),((132),1),((1342),-1),((14),-1),((14)(23),1),((142),1),\\
&~~~~~((1324),-1),((23),-1),((124),1),((1243),-1),((24),-1),((134),1),\\
&~~~~~((1423),-1),((34),-1),((143),1),((1432),-1),((243),1),((234),1)\}.
\end{align*}
\section{Spectral Properties of Operator $\mathscr A$}\label{sec:L}
In this section, we leverage the spectral properties of the Laplace operator $\mathscr L: \mathscr H \rightarrow \mathscr H$ to describe the spectrum of 
\begin{align*}
    \mathscr A(\alpha):= \operatorname{Id} - \mathscr L^{-1} A(\alpha): \mathscr H \rightarrow \mathscr H.
\end{align*}
To begin, consider the following eigenvalue problem defined on the planar unit disc $D:=\{z\in \bc: |z|<1\}$ with Dirichlet boundary conditions
\begin{align*}
\begin{cases}
-\Delta u=\lambda u, \quad u \in \mathscr H, \lambda \in \br\\
u|_{\partial D}= 0
\end{cases}
\end{align*} 
In polar coordinates, the Laplacian $\Delta$ takes the form
\begin{align*}
\Delta u=\frac {\partial ^2 u} {\partial r^2} +\frac {1} r  \frac{\partial  u}{ \partial
r}+\frac 1 {r^{2}}\frac {\partial ^2 u} {\partial \theta^2},
\end{align*} 
Assuming that $u \in \mathscr H$ can be separated  $u(r,\theta)=R(r)\cdot T(\theta)$, the eigenvalue problem $-\Delta u=\lambda u$ becomes
\begin{align*}
\left(-R''-\frac {1}r R'\right) T-\frac1 {r^2} T''R&=\lambda RT.
\end{align*}
Rearranging terms, we obtain
\begin{align}\label{eq:sep-Lap}
\frac{r^2R''+rR'}R+\lambda r^2=-\frac{T''}{T}.
\end{align}
Since the left and right-hand sides of equation \eqref{eq:sep-Lap} depend on different variables, there must exist a constant $c$ such that 
\begin{align}
r^2R''+rR'+(\lambda r^2-c)R=0,\label{eq:bessel}\\
-T''= cT.\label{eq:spherilcal-L}
\end{align}
To ensure that $u(r,\theta)$ is single-valued and continuous on $D$, we impose the periodicity condition
$T(\theta + 2\pi) = T(\theta)$. Then, the angular part \eqref{eq:spherilcal-L} becomes the ordinary differential equation
\begin{align*}
\begin{cases}
T''(\theta) = -c T(\theta)\\
T(\theta + 2\pi) = T(\theta).
\end{cases}
\end{align*} 
which has eigenvalues $c=m^2$ for $m=0,1,2,\ldots$ with corresponding eigenfunctions $T(\theta)=cos(m\theta)$ and $T(\theta)=\sin(m\theta)$.
\vs
Notice that, as a consequence of solving for $T(\theta)$, we have determined the values of constant $c$:
\begin{align*}c=m^2, m=0,1,2,\dots\end{align*}
Then, equation \eqref{eq:bessel} becomes 
\begin{equation}\label{eq:bessel1}
r^2 R''+rR'+(\lambda r^2-m^2)R=0.
\end{equation}
Introducing the change of variables $\rho=\sqrt \lambda r$ and $\wt R(\rho)=R(\sqrt \lambda r )$ and substituting into \eqref{eq:bessel1}, we obtain the classical Bessel equation
\[
\rho^2\wt R''(\rho)+\rho\wt R'(\rho)+(\rho^2-m^2)\wt R(\rho)=0.
\]
The bounded at zero solution is given by $\wt R(\rho)=J_m(\rho)$, where $J_m$ stands for the $m$-th  Bessel function of the first kind. It follows that solutions to \eqref{eq:bessel} are constant multiples of  $R(r)=J_m(\sqrt \lambda  r)$ (notice that $R(r)$ is finite at zero).
\vs
Consequently, $u(r,\theta)=R(r)\Theta(\theta)$ satisfies the Dirichlet  condition if $R(1)=0$, i.e. $J_m(\sqrt \lambda )=0$. In this way, we obtain that the spectrum of the Laplace operator is given by
\[
\sigma(\mathscr L)=\{s_{nm}: \text{$\sqrt{s_{nm}}$ is  the $n$-th positive zero of  $J_m, \; n  \in \bn \; m = 0,1,\ldots\}$}.
\]
Moreover, corresponding to each eigenvalue $s_{nm} \in \sigma(\mathscr L)$, there is the eigenfunction
\[
u_{nm}(r,\theta) = J_m(\sqrt{s_{nm}}r) \left(A\cos(m\theta)+B\sin(m\theta) \right),
\]
where $A,B \in \br$, and also the eigenspace 
\begin{align*}
\label{eq:Egs}
\mathscr E(s_{nm})=\text{span\,}\Big\{J_m(\sqrt{s_{nm}}r)\Big(A\cos(m\theta)+B\sin(m \theta)\Big): \vec a,
\, \vec b\in \mathbb{R}^k, \; 0\le n \le k \Big\}
\end{align*}
In this way, we have determined the spectrum of our operator $\mathscr A$ 
\[
\sigma(\mathscr A)=\left\{\xi_{n,m,j}:=1-\frac {\mu_j(\alpha)}{s_{nm}} :  j=1,2,\dots,k,\; n\in \bn,\; m=0,1,2,\dots\right\}.
\]
\section{Equivariant Brouwer Degree Background}
\label{subsec:G-degree}
We encourage readers interested in exploring the statements made in this section with greater detail to refer to the texts \cite{book-new}, \cite{AED}.
\vs
\noi{\bf  Equivariant notation.} Let $G$ be a compact Lie Group.
For any subgroup $H \leq G$, denote by $(H)$ its conjugacy class, by
by $N(H)$ its normalizer by $W(H):=N(H)/H$ its Weyl group in $G$. The set of all subgroup conjugacy classes in $G$, denoted $\Phi(G):=\{(H): H\le G\}$, has a natural partial order defined as follows
\[
(H)\leq (K) \iff \exists_{ g\in G}\;\;gHg^{-1}\leq K.
\]
In particular, we put $\Phi_0 (G):= \{ (H) \in \Phi(G) \; : \; \text{$W(H)$  is finite}\}$ and, for any $(H),(K) \in \Phi_0(G)$, we denote by $n(H,K)$ the number of subgroups $\tilde K \leq G$ with $\tilde K \in (K)$ and $H \leq \tilde K$.
\vs
For a $G$-space $X$ and $x\in X$, denote by
$G_{x} :=\{g\in G:gx=x\}$  the {\it isotropy group}  of $x$
and call $(G_{x}) \in \Phi(G)$  the {\it orbit type} of $x\in X$. Put $\Phi(G,X) := \{(H) \in \Phi_0(G) \; : \; 
(H) = (G_x) \; \text{for some $x \in X$}\}$ and  $\Phi_0(G,X):= \Phi(G,X) \cap \Phi_0(G)$. For a subgroup $H\leq G$, the subspace $
X^{H} :=\{x\in X:G_{x}\geq H\}$ is called the {\it $H$-fixed-point subspace} of $X$. If $Y$ is another $G$-space, then a continuous map $f : X \to Y$ is said to be {\it $G$-equivariant} if $f(gx) = gf(x)$ for each $x \in X$ and $g \in G$.  \cite{AED}.
\vs
\noi{\bf The Burnside Ring and Axioms of Equivariant Brouwer Degree.}
The free $\bz$-module $A(G) := \bz[\Phi_0(G)]$ becomes the Burnside ring when equipped with a natural multiplicative operation, called the Burnside ring product and defined for any pair of generators $(H),(K) \in \Phi_0(G)$ as follows
\begin{align} \label{def:burnside_product}
    (H) \cdot (K) := \sum\limits_{(L) \in \Phi_0(G)} n_L(L), 
\end{align}
where the coefficients $n_L \in \bz$ are given by the recurrence formula
\begin{align} \label{def:recurrence_formula_coefficients_burnside_product}
    n_L := \frac{n(L,H) |W(H)| n(L,K) |W(K)| - \sum_{(\tilde L) > (L)} n_{\tilde L} n(L,\tilde L) |W(\tilde L)|}{|W(L)|}.
\end{align}
Let $V$ be an orthogonal $G$-representation with an open bounded $G$-invariant set $\Om \subset V$. A $G$-equivariant map $f:V \rightarrow V$ is said to be $\Om$-admissible if $f(x) \neq 0$ for all $x \in \partial \Om$, in which case the pair $(f,\Om)$ is called an admissible $G$-pair in $V$. We denote by $\mathcal M^G(V)$ the set of all admissible $G$-pairs in $V$ and by $\mathcal{M}^{G}$ the set of all admissible $G$-pairs defined by taking a union over all orthogonal $G$-representations, i.e.
\[
\mathcal M^G := \bigcup\limits_V \mathcal M^G(V).
\]
The following statement is the standard axiomatic definition of the {\it $G$-equivariant Brouwer degree}:
\vs
\begin{theorem}
	\label{thm:GpropDeg} There exists a unique map $\gdeg:\mathcal{M}
	^{G}\to A(G)$, that assigns to every admissible $G$-pair $(f,\Omega)$ the Burnside Ring element
	\begin{equation}
		\label{eq:G-deg0}\gdeg(f,\Omega)=\sum_{(H) \in \Phi_0(G)}%
		{n_{H}(H)},
	\end{equation}
	satisfying the following properties:
	\begin{itemize}
		\item[] \textbf{(Existence)} If  $n_{H} \neq0$ for some $(H) \in \Phi_0(G)$ in \eqref{eq:G-deg0}, then there
		exists $x\in\Omega$ such that $f(x)=0$ and $(G_{x})\geq(H)$.

		\item[] \textbf{(Additivity)} 
  For any two  disjoint open $G$-invariant subsets
  $\Omega_{1}$ and $\Omega_{2}$ with
		$f^{-1}(0)\cap\Omega\subset\Omega_{1}\cup\Omega_{2}$, one has
		\begin{align*}
			\gdeg(f,\Omega)=\gdeg(f,\Omega_{1})+\gdeg
			(f,\Omega_{2}).
		\end{align*}

		\item[] \textbf{(Homotopy)} For any 
  $\Omega$-admissible $G$-homotopy, $h:[0,1]\times V\to V$, one has
		\begin{align*}
			\gdeg(h_{t},\Omega)=\mathrm{constant}.
		\end{align*}

		\item[] \textbf{(Normalization)}
  For any open bounded neighborhood of the origin in an orthogonal $G$-representation $V$ with the identity operator $\id:V \rightarrow V$, one has
		\begin{align*}
			\gdeg(\id,\Omega)=(G).
		\end{align*}
	\end{itemize}
 \vs
The following are additional properties of the map $\gdeg$ which can be derived from the four axiomatic properties defined above:		
\begin{itemize}
		\item[] {\textbf{(Multiplicativity)}} For any $(f_{1},\Omega
		_{1}),(f_{2},\Omega_{2})\in\mathcal{M} ^{G}$,
		\begin{align*}
			\gdeg(f_{1}\times f_{2},\Omega_{1}\times\Omega_{2})=
			\gdeg(f_{1},\Omega_{1})\cdot \gdeg(f_{2},\Omega_{2}),
		\end{align*}
		where the multiplication `$\cdot$' is taken in the Burnside ring $A(G )$.

		\item[] \textbf{(Recurrence Formula)} For an admissible $G$-pair
		$(f,\Omega)$, the $G$-degree \eqref{eq:G-deg0} can be computed using the
		following Recurrence Formula:
		\begin{equation}
			\label{eq:RF-0}n_{H}=\frac{\deg(f^{H},\Omega^{H})- \sum_{(K)>(H)}{n_{K}\,
					n(H,K)\, \left|  W(K)\right|  }}{\left|  W(H)\right|  },
		\end{equation}
		where $\left|  X\right|  $ stands for the number of elements in the set $X$
		and $\deg(f^{H},\Omega^{H})$ is the Brouwer degree of the map $f^{H}%
		:=f|_{V^{H}}$ on the set $\Omega^{H}\subset V^{H}$.
	\end{itemize}
\end{theorem}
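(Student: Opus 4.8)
The plan is to follow the classical two-stage argument (see \cite{book-new,AED}): first prove \emph{uniqueness} by showing that the four axioms force the coefficients $n_H$ in \eqref{eq:G-deg0} to obey the Recurrence Formula \eqref{eq:RF-0}, and then prove \emph{existence} by taking \eqref{eq:RF-0} (equivalently, a regular normal approximation of $f$) as a definition and checking that the resulting map indeed satisfies Existence, Additivity, Homotopy and Normalization.

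For uniqueness, I would fix an admissible $G$-pair $(f,\Omega)$ in an orthogonal $G$-representation $V$ and, for each $(H)\in\Phi_0(G)$, pass to $(f^H,\Omega^H)$ on the fixed-point subspace $V^H$. The key structural input is that the stratum $\Omega^H\setminus\bigcup_{(K)>(H)}\Omega^K$ is a $W(H)$-manifold on which $W(H)$ acts freely; hence, after a small $G$-equivariant perturbation making $f$ regular normal, the zero set of $f$ in $\Omega^H$ is a finite union of free $W(H)$-orbits lying in the strata of orbit types $(K)\ge(H)$. Counting these orbits with the appropriate local Brouwer signs, and invoking the Existence axiom to localize all zeros to orbits, one obtains the triangular system
\[
\deg(f^H,\Omega^H)=\sum_{(K)\ge(H)}n_K\,n(H,K)\,|W(K)|,
\]
which, solved downward from the maximal orbit type, yields exactly \eqref{eq:RF-0}. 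Since the right-hand side of \eqref{eq:RF-0} depends only on classical Brouwer degrees of $f$ on fixed-point subspaces, every $n_H$ — and therefore $\gdeg(f,\Omega)$ — is uniquely pinned down; this simultaneously establishes the Recurrence Formula property.

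For existence, I would define $n_H$ by \eqref{eq:RF-0} and verify the axioms. Integrality of the $n_H$ follows from the same counting argument applied to an explicit regular representative of $(f,\Omega)$, or alternatively by induction on orbit type using the divisibility relations among the numbers $n(H,K)$ and $|W(K)|$. Normalization is immediate: for $(\id,\Omega)$ one has $\deg(\id^H,\Omega^H)=1$ whenever $\Omega^H\ne\emptyset$, and the recurrence telescopes to $(G)$. Additivity and Homotopy are inherited stratum-by-stratum from the corresponding properties of the classical Brouwer degree — for Additivity split each $\Omega^H$ into $\Omega_1^H\cup\Omega_2^H$; for Homotopy note that an $\Omega$-admissible $G$-homotopy $h_t$ restricts to an $\Omega^H$-admissible homotopy $h_t^H$, so each $\deg(h_t^H,\Omega^H)$, and hence each $n_H$, is constant. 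The Existence axiom is built into \eqref{eq:RF-0}: if $n_H\ne0$ then, unwinding the recurrence, $\deg(f^K,\Omega^K)\ne0$ for some $(K)\ge(H)$, so $f^K$ — and a fortiori $f$ — has a zero $x$ with $(G_x)\ge(K)\ge(H)$.

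Finally, Multiplicativity follows by combining the product formula for the classical Brouwer degree on fixed-point subspaces with the definition \eqref{def:burnside_product}--\eqref{def:recurrence_formula_coefficients_burnside_product} of the Burnside ring product: since $(V_1\oplus V_2)^H=V_1^H\oplus V_2^H$ and $(f_1\times f_2)^H=f_1^H\times f_2^H$, one has $\deg((f_1\times f_2)^H,\Omega_1^H\times\Omega_2^H)=\deg(f_1^H,\Omega_1^H)\,\deg(f_2^H,\Omega_2^H)$, and running the recurrence on each side and matching coefficients against \eqref{def:recurrence_formula_coefficients_burnside_product} gives the stated identity. The main obstacle is the uniqueness step — precisely, the bookkeeping of how zeros of a regular normal $G$-map distribute across the isotropy strata of $\Omega$ and the verification that the resulting triangular linear system inverts to \eqref{eq:RF-0}; once this Recurrence Formula is secured, every remaining assertion reduces to standard facts about the classical (non-equivariant) Brouwer degree.
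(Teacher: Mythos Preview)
The paper does not actually prove this theorem: it is stated in Appendix~\ref{subsec:G-degree} purely as background material, with the reader referred to \cite{book-new} and \cite{AED} for details. Your proposal is a reasonable sketch of the standard two-stage argument from those references (uniqueness via the triangular system on fixed-point subspaces, existence via the recurrence definition), but there is nothing in the paper to compare it against.
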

\vs
\noi{\bf Computation of Brouwer equivariant degree.} For an orthogonal $G$-representation $V$ with the open unit ball $B(V):=\left\{  x\in V:\left|  x\right|  <1\right\}$, denote by $\{ \mathcal V_i \}_{i \in \bn}$ the set of its irreducible $G$-subrepresentations. In particular, define the \emph{basic degree associated with $\mathcal V_i$} as follows
\begin{align*}
	\deg_{\mathcal{V}_{i}}:=\gdeg(-\id,B(\mathcal{V} _{i})),
\end{align*} 
Now, consider a $G$-equivariant linear isomorphism $T:V\to V$ and assume that $V$
has a $G$-isotypic  decomposition
\[
V = \bigoplus_{i \in \bn} V_i,
\]
where each isotypic component $V_i$ is equivalent to $m_i \in \bn$ copies of the irreducible $G$-representation $\mathcal V_i$. From the
Multiplicativity property of the $G$-equivariant Brouwer Degree, one has
\begin{align*}
  \gdeg(T,B(V))=\prod_{i \in \bn} \gdeg
	(T_{i},B(V_{i}))= \prod_{i \in \bn}\prod_{\mu\in\sigma_{-}(T)} \left(
	\deg_{\mathcal{V} _{i}}\right)  ^{m_{i}(\mu)}%
\end{align*}
where $T_{i}=T|_{V_{i}}$ and $\sigma_{-}(T)$ denotes the real negative
spectrum of $T$. \vskip.3cm

Notice that the basic degrees can be effectively computed from \eqref{eq:RF-0}: 
\begin{align*}
	\deg_{\mathcal{V} _{i}}=\sum_{(H)}n_{H}(H),
\end{align*}
where 
\begin{align*}
    n_{H}=\frac{(-1)^{\dim\mathcal{V} _{i}^{H}}- \sum_{H<K}{n_{K}\, n(H,K)\, \left|  W(K)\right|  }}{\left|  W(H)\right|  }.
\end{align*}
\vs
The following fact is well-known (see for example \cite{survey}).
\begin{lemma}
    For any irreducible $G$-representation $\mathcal V$, the basic degree $\deg_{\mathcal V} \in A(G)$ is an involutive element of the Burnside Ring, i.e.
    \[
    (\deg_{\cV_j})^2=\deg_{\cV_j} \cdot \deg_{\cV_j}=(G).
    \]
\end{lemma}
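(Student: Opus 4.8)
The plan is to derive the relation $\deg_{\mathcal V}\cdot\deg_{\mathcal V}=(G)$ directly from the axioms of the $G$-equivariant Brouwer degree recalled in Appendix \ref{subsec:G-degree}, the only substantive step being the construction of an appropriate equivariant homotopy. First I would invoke the \textbf{Multiplicativity} property to write
\[
\deg_{\mathcal V}\cdot\deg_{\mathcal V}=\gdeg(-\id,B(\mathcal V))\cdot\gdeg(-\id,B(\mathcal V))=\gdeg\big((-\id)\times(-\id),\,B(\mathcal V)\times B(\mathcal V)\big),
\]
and then observe that $(-\id)\times(-\id)$ is precisely the map $-\id$ acting on the orthogonal $G$-representation $\mathcal V\oplus\mathcal V$ equipped with the diagonal $G$-action.

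Next, since the only zero of the linear isomorphism $-\id\colon\mathcal V\oplus\mathcal V\to\mathcal V\oplus\mathcal V$ is the origin, and the origin lies both in $B(\mathcal V)\times B(\mathcal V)$ and in the unit ball $B(\mathcal V\oplus\mathcal V)$, the \textbf{Additivity} (excision) property yields
\[
\gdeg\big(-\id,\,B(\mathcal V)\times B(\mathcal V)\big)=\gdeg\big(-\id,\,B(\mathcal V\oplus\mathcal V)\big).
\]

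The heart of the argument is to connect $-\id$ to $\id$ on $\mathcal V\oplus\mathcal V$ through $B(\mathcal V\oplus\mathcal V)$-admissible $G$-equivariant maps. I would take the rotation homotopy $h\colon[0,1]\times(\mathcal V\oplus\mathcal V)\to\mathcal V\oplus\mathcal V$,
\[
h_t(v,w):=\big(\cos(\pi t)\,v-\sin(\pi t)\,w,\ \sin(\pi t)\,v+\cos(\pi t)\,w\big),\qquad v,w\in\mathcal V,
\]
so that $h_0=\id$ and $h_1=-\id$. Each $h_t$ is an orthogonal linear map, hence a linear isomorphism, so $h_t^{-1}(0)=\{0\}$ and $h$ is $B(\mathcal V\oplus\mathcal V)$-admissible; moreover $h_t$ mixes the two copies of $\mathcal V$ only by scalar coefficients and therefore commutes with the diagonal $G$-action, i.e.\ each $h_t$ is $G$-equivariant. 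Applying the \textbf{Homotopy} and \textbf{Normalization} properties then gives
\[
\deg_{\mathcal V}\cdot\deg_{\mathcal V}=\gdeg(-\id,B(\mathcal V\oplus\mathcal V))=\gdeg(\id,B(\mathcal V\oplus\mathcal V))=(G),
\]
which is the claim.

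The only point requiring care — and the step I would regard as the main obstacle, though it is routine — is verifying that the rotation homotopy $h$ is genuinely $G$-equivariant and admissible; once this is in hand the result is immediate from the degree axioms. I would also remark that irreducibility of $\mathcal V$ is never used: the same computation shows that $-\id$ restricted to any even direct sum of copies of an orthogonal $G$-representation is $G$-homotopic to the identity, so that $\gdeg(-\id,\cdot)$ on such a space equals $(G)$.
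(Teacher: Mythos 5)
Your proof is correct. The paper itself states this lemma without proof, citing it as a well-known fact (see the reference to the equivariant degree survey), so there is no in-paper argument to compare against; your rotation homotopy $h_t$ from $\id$ to $-\id$ on $\mathcal V\oplus\mathcal V$, combined with Multiplicativity, excision, and Normalization, is precisely the classical argument. Two small checks you handled correctly and that are worth keeping explicit in a final write-up: the admissibility of $h$ follows because each $h_t$ is orthogonal, hence norm-preserving, so $h_t$ never vanishes on $\partial B(\mathcal V\oplus\mathcal V)$; and $G$-equivariance of $h_t$ holds because $h_t$ acts by scalar coefficients across the two diagonal copies of $\mathcal V$, so it commutes with the diagonal $G$-action. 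Your closing remark that irreducibility is never used is accurate — the identity $\gdeg(-\id,B(W\oplus W))=(G)$ holds for any orthogonal $G$-representation $W$ — though the lemma is stated for irreducibles because that is the only case in which $\deg_{\mathcal V}$ is a \emph{basic} degree and the involutivity in $A(G)$ is used downstream (e.g.\ in \eqref{fact_3}).
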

\vs

\newpage
 
\end{document}